\documentclass[a4paper,10pt]{amsart}
\usepackage{amsmath}
\usepackage{amsfonts}
\usepackage{amssymb}
\usepackage{amsthm}
\usepackage{graphics}
\usepackage{verbatim}

\numberwithin{equation}{section} 

\setlength{\textwidth}{120truemm}
\setlength{\textheight}{210truemm}

\allowdisplaybreaks

\newtheorem{theorem}{Theorem} 
\newtheorem{lemma}[theorem]{Lemma}

\newtheorem{proposition}[theorem]{Proposition}
\theoremstyle{definition}

\newcommand{\ADS}{\mathrm{AdS}}
\newcommand{\EIN}{\mathrm{Ein}}
\newcommand{\ADSL}{\Delta_{n-1}}
\newcommand{\LAP}{\Delta}
\newcommand{\Ad}{\mathrm{Ad}}
\newcommand{\FRAKB}{\mathfrak{q}}

\newcommand{\PHAT}{\hat{P}}

\def\c{{\mathbb C}}   

\newcommand{\ARBIT}{\mathcal{U}(\mathfrak{n}_{n+1}^{-})
(\mathfrak{m}_{n+1} \oplus \mathbb{C}(H-\lambda))}

\newcommand{\Ind}{\mathrm{Ind}}

\usepackage[latin1]{inputenc}

\begin{document}

\author[]{Pierre B\"acklund} \title[Equivariant differential operators
and AdS spaces]{Families of equivariant differential operators and
anti de Sitter spaces} \date{\today} \subjclass[2000]{Primary 58J50;
Secondary 22E30, 22E47, 43A85, 53A30}

\address{Matematiska institutionen, Uppsala universitet, Box 480, SE-75106
Uppsala, SWEDEN}
\email{pierre@math.uu.se}

\maketitle

\tableofcontents

\section{Introduction}\label{intro}

Let $\widehat{\EIN}_n$ be the compact Lorentzian manifold $S^1 \times
S^{n-1}$ with the metric $(-g_{S^1}) \oplus g_{S^{n-1}}$. The
conformal group of $\widehat{\EIN}_n$, i.e., the group of all
diffeomorphisms which induce conformally equivalent metrics, can be
identified with $O(2,n)$. Let
$$
i: \widehat{\EIN}_{n-1} \hookrightarrow \widehat{\EIN}_n
$$ 
be the isometric embedding which is induced by the equatorial
embedding $S^{n-2} \to S^{n-1}$ of spheres. We construct a sequence
$\hat{D}_N^{\text{c}}(\lambda)$, $N \ge 0$ of polynomial families
$$
\hat{D}_N^{\text{c}}(\lambda): C^\infty(\widehat{\EIN}_n) \to
C^\infty(\widehat{\EIN}_{n-1})
$$ 
of $O(2,n-1)$-equivariant differential operators. Here equivariance
is understood with respect to respective spherical principal series
representations $\pi_\lambda^c$ on $C^\infty(\widehat{\EIN}_n)$ and
$C^\infty(\widehat{\EIN}_{n-1})$.

The results extend corresponding results of \cite{juhl_conform} on
$SO(1,n)^\circ$-equivariant families $D_N^c(\lambda): C^\infty(S^n)
\to C^\infty(S^{n-1})$. In \cite{juhl_conform} the families
$D_N^c(\lambda)$ serve as conformally flat models of corresponding
conformally covariant ``curved analogs'' $C^\infty(M) \to
C^\infty(\Sigma)$ which are canonically associated to any hypersurface
$\Sigma$ in a Riemannian manifold $M$. In \cite{juhl_conform} it is
shown how such families contain information on Branson's
$Q$-curvature. In particular, combining the holographic formula for
$Q$-curvature (\cite{juhl_holo}) with the structure of the
intertwining families, provides recursive relations for $Q$-curvature.

In the present situation, the conformal action of $O(2,n)$ on
$\widehat{\EIN}_n$ replaces the conformal action of $O(1,n)$ on
$S^{n-1}$. Whereas $S^{n-1}$ is the boundary at infinity of the
hyperbolic space $\mathbb{H}^n$ with the isometry group $O(1,n)$, here
$\widehat{\EIN}_n$ appears as the conformal boundary of the
$n+1$-dimensional anti de Sitter space $\ADS_{n+1}$ with the isometry
group $O(2,n)$ (see Section \ref{prel}). The universal covering space of
$\widehat{\EIN}_n$ is given by
$$
(\mathbb{R} \times S^{n-1},-dr^2+g_{S^{n-1}})
$$ 
and is also known as Einstein's static universe; \cite{frances_conformal}.

For even $N$, the family $\hat{D}_N^{\text{c}}(\lambda)$ drops down to
a polynomial family
\begin{equation}
D_N^{\text{c}}(\lambda) : C^{\infty}(\EIN_{n}) \rightarrow
C^{\infty}(\EIN_{n-1})
\label{DROP}
\end{equation}
of $O(2,n-1)$-equivariant differential operators, where
$$ 
\EIN_n = (S^{1} \times S^{n-1})/\mathbb{Z}_2,
$$ 
and the $\mathbb{Z}_2$-action is coming from the involution
$S^1 \times S^{n-1} \rightarrow S^1 \times S^{n-1}$ given by
$(x,y) \mapsto (-x,-y)$. 

For odd $N$, the family
$\hat{D}^{\text{c}}_{N}(\lambda)$ does not drop down to a well-defined
$O(2,n-1)$-equivariant map
$C^{\infty}(\EIN_{n}) \rightarrow C^{\infty}(\EIN_{n-1})$. The
non-existence of odd-order families has a geometric reason. In \cite{juhl_conform} it is shown that for any (orientable) hypersurface $\Sigma$
of an (orientable) manifold $M$, the family
$$
D_1(\lambda) = i^* \nabla_N + \lambda H i^*: C^\infty(M) \to
C^\infty(\Sigma)
$$
is conformally covariant. Here $N$ denotes a unit normal field on
$\Sigma$ and $H$ is the corresponding mean curvature. $\widehat{\textrm{Ein}}_n$ is an (orientable) hypersurface in $\widehat{\mathrm{Ein}}_{n+1}$ with
$H=0$,
and the family $D_1(\lambda)$ reduces to the equivariant family $D_1^c = i^*
\nabla_N$
given by the normal derivative. Now for odd $n$, $\text{Ein}_n$ is a non-orientable codimension one submanifold of
the
orientable manifold $\text{Ein}_{n+1}$, and the above construction does
not descent to $\text{Ein}_n \hookrightarrow \text{Ein}_{n+1}$. Similarly, for even $n$,
$\text{Ein}_{n+1}$ is non-orientable, and the above construction does not
descent.

The space $\widehat{\EIN}_{n-1}$ can alternatively be described as the
set of rays through the origin in the light cone
\begin{equation}
C_n = \left\{ -t_1^2 - t_2^2 + x_1^2 + \dots + x_{n-1}^2 = 0 \right\}.
\label{lightcone}
\end{equation}
$G^n = O(2,n-1)$ acts transitively on
$\widehat{\mathrm{Ein}}_{n-1}$, and we let $\hat{P}^n \subset G^n$ be
the isotropy group of the ray generated by $(1,0,0,1,0,\dots,0) \in
\mathbb{R}^{n+1}$. In a similar way, $\EIN_{n-1}$ is given by the set
of lines in $C_n$ through the origin, and we let $P^n \subset G^n$
denote the isotropy group of the line through $(1,0,0,1,0,\dots,0) \in
\mathbb{R}^{n+1}$.

As in \cite{juhl_conform} we discuss two different types of
constructions and prove that both lead to the same result. One
construction is in terms of representation theory and one is in terms
of spectral theory of an associated Laplacian.


We start with the description of the Lie-theoretic construction. For
$n \geq 4$ let $G^n = O(2,n-1)$ with Lie algebra $\mathfrak{g}_n =
\mathfrak{o}(2,n-1)$. The isotropy group $P^n$ is a maximal parabolic
subgroup with Langlands decomposition $P^n = M^n A (N^+)^n$. The Lie
algebra $\mathfrak{p}_n$ of $P^n$ has the Langlands decomposition
$\mathfrak{p}_n= \mathfrak{m}_n \oplus \mathfrak{a} \oplus
\mathfrak{n}_n^+$ with a one-dimensional space $\mathfrak{a}$. Here
$\mathfrak{a}$ is spanned by $H$ and the Lie subalgebras
$\mathfrak{n}_n^\pm$ are the respective $\pm$-eigenspaces of
$\text{ad}(H)$. Let $(N^-)^n = \exp(\mathfrak{n}_n^-)$.

For $\lambda \in \c$ we define the character $\xi_{\lambda}:
\mathfrak{p}_n \rightarrow \c = \c_{\lambda}$ by $\xi_{\lambda}(tH) =
t\lambda$ ($\xi_{\lambda}$ acts by $0$ on $\mathfrak{m}_n \oplus
\mathfrak{n}_n^+$). We shall use the same notation $\xi_{\lambda}$ for
the character of $P$ given by $\xi_{\lambda}(\exp(tH))= e^{\lambda t}$.

Let $\mathcal{U}(\mathfrak{g}_n)$ be the universal enveloping algebra
of $\mathfrak{g}_n$. For $\lambda \in \c$ the character $\xi_\lambda$
of $\mathfrak{p}_n$ gives rise to the generalized Verma module (see
\eqref{GVM})
$$
\mathcal{M}_{\lambda}(\mathfrak{g}_n) = \mathcal{U}(\mathfrak{g}_n)
\otimes_{\mathcal{U}(\mathfrak{p}_n)} \c_{\lambda}.
$$ 
Similarly, for the character $\xi_\lambda$ of $P$ we consider the
induced representation $\Ind_P^G (\xi_{\lambda})$ of $G$.

Let $i:\mathcal{U}(\mathfrak{g}_n) \rightarrow
\mathcal{U}(\mathfrak{g}_{n+1})$ be the map induced by the inclusion
$\mathfrak{g}_n \subset \mathfrak{g}_{n+1}$ given by \eqref{inkl}.

\begin{theorem}\label{I-1} For any non-negative integer $N$ there 
exists a polynomial family $\mathcal{D}^{0}_N(\lambda) \in
\mathcal{U}(\mathfrak{n}_{n+1}^-)$ such that the map
$$ 
\mathcal{U}(\mathfrak{g}_n) \otimes \c_{\lambda-N} \ni T
\otimes 1 \mapsto i(T) \mathcal{D}^{0}_N(\lambda) \otimes 1 \in
\mathcal{U}(\mathfrak{g}_{n+1}) \otimes \c_{\lambda}
$$ 
induces a homomorphism
$$ 
\mathcal{M}_{\lambda-N}(\mathfrak{g}_n) \rightarrow
\mathcal{M}_{\lambda}(\mathfrak{g}_{n+1})
$$ of $\mathcal{U}(\mathfrak{g}_n)$-modules for all
$\lambda$. Furthermore, for any $\lambda \in \c$,
$\mathcal{D}^{0}_N(\lambda)$ spans the space of all homomorphisms
$\mathcal{M}_{\lambda-N}(\mathfrak{g}_n) \rightarrow
\mathcal{M}_{\lambda}(\mathfrak{g}_{n+1})$ of
$\mathcal{U}(\mathfrak{g}_n)$-modules.
\end{theorem}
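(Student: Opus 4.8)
The plan is to convert the homomorphism problem, via Frobenius reciprocity, into the determination of a finite–dimensional space of singular vectors, and then to pin that space down explicitly, exploiting that $\mathfrak{n}_{n+1}^-$ is abelian.

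First I would record the reduction. Since $\mathcal{M}_{\lambda-N}(\mathfrak{g}_n)$ is generated over $\mathcal{U}(\mathfrak{g}_n)$ by the canonical vector $v_{\lambda-N}:=1\otimes 1$, a $\mathcal{U}(\mathfrak{g}_n)$–homomorphism $\varphi\colon\mathcal{M}_{\lambda-N}(\mathfrak{g}_n)\to\mathcal{M}_\lambda(\mathfrak{g}_{n+1})$ is determined by $w:=\varphi(v_{\lambda-N})$, and $w$ must satisfy $\mathfrak{m}_n\cdot w=0$, $\mathfrak{n}_n^+\cdot w=0$, $H\cdot w=(\lambda-N)w$ (as $v_{\lambda-N}$ does in $\mathcal{M}_{\lambda-N}(\mathfrak{g}_n)$); conversely any such $w$ yields a homomorphism via $\varphi(T\otimes 1)=i(T)\cdot w$, which is exactly the map in the statement once one notes that, $\mathfrak{n}_{n+1}^-$ being abelian, $\mathcal{U}(\mathfrak{n}_{n+1}^-)=S(\mathfrak{n}_{n+1}^-)$, PBW gives $\mathcal{M}_\lambda(\mathfrak{g}_{n+1})=S(\mathfrak{n}_{n+1}^-)v_\lambda$, and the weight condition (using $\mathrm{ad}(H)=-1$ on $\mathfrak{n}_{n+1}^-$) forces $w\in S^N(\mathfrak{n}_{n+1}^-)v_\lambda$, i.e. $w=\mathcal{D}^{0}_N(\lambda)v_\lambda$ for a unique $\mathcal{D}^{0}_N(\lambda)\in\mathcal{U}(\mathfrak{n}_{n+1}^-)$ (necessarily of degree $N$). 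Thus the theorem amounts to the claim that
\[
\{\,w\in S^N(\mathfrak{n}_{n+1}^-)v_\lambda:\ \mathfrak{m}_n\cdot w=0,\ \mathfrak{n}_n^+\cdot w=0\,\}
\]
is one–dimensional for every $\lambda\in\c$, with a generator that is a polynomial family in $\lambda$.

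Next I would make this space concrete. Under the inclusion $\mathfrak{g}_n\subset\mathfrak{g}_{n+1}$ of \eqref{inkl}, $\mathfrak{n}_n^-\cong\mathbb{R}^{1,n-2}$ is a nondegenerate hyperplane in $\mathfrak{n}_{n+1}^-\cong\mathbb{R}^{1,n-1}$, $\mathfrak{m}_n\cong\mathfrak{o}(1,n-2)$ acts by the standard representation on $\mathfrak{n}_n^-$ and trivially on the orthogonal line $\mathbb{R}Y_0$, and $\mathfrak{n}_n^+\subset\mathfrak{n}_{n+1}^+$ spans the $\mathfrak{m}_n$–vector representation. By the first fundamental theorem for $\mathfrak{o}(1,n-2)$, the $\mathfrak{m}_n$–invariants in $S^N(\mathfrak{n}_{n+1}^-)$ have basis $\{Y_0^{N-2j}Q^j:0\le j\le\lfloor N/2\rfloor\}$, where $Q\in S^2(\mathfrak{n}_n^-)$ is the invariant quadratic form, so it remains to intersect this $(\lfloor N/2\rfloor+1)$–dimensional space with $\ker\mathfrak{n}_n^+$. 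Realising $\mathcal{M}_\lambda(\mathfrak{g}_{n+1})$ on $S(\mathfrak{n}_{n+1}^-)$ in the standard way — $\mathfrak{n}_{n+1}^-$ acting by multiplication, $H$ by $\lambda$ minus the Euler operator, each $X\in\mathfrak{n}_{n+1}^+$ by a degree–lowering differential operator of conformal type, affine in $\lambda$, obtained by pushing $X$ past $\mathfrak{n}_{n+1}^-$ via $[\mathfrak{n}_{n+1}^+,\mathfrak{n}_{n+1}^-]\subset\mathfrak{m}_{n+1}\oplus\mathfrak{a}$ — and applying $\mathfrak{n}_n^+$ to $w=\sum_j c_jY_0^{N-2j}Q^j$, the $\dim\mathfrak{n}_n^+$ conditions $\mathfrak{n}_n^+\cdot w=0$ reduce, by $\mathfrak{m}_n$–equivariance, to a single two–term recursion
\[
\gamma_{j+1}(\lambda)\,c_{j+1}+\delta_j\,c_j=0,\qquad 0\le j\le\lfloor N/2\rfloor-1,
\]
with $\gamma_{j+1}(\lambda)$ affine in $\lambda$ and $\delta_j=\mp(N-2j)(N-2j-1)$ independent of $\lambda$. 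The key point is that $\delta_j\neq 0$ throughout this range; hence the recursion, solved from $c_{\lfloor N/2\rfloor}$ downward, has a one–dimensional solution space for every $\lambda$, the normalized solution is polynomial in $\lambda$ (no $\lambda$ ever occurs in a denominator) and is not identically zero. This solution is the desired $\mathcal{D}^{0}_N(\lambda)$ — a Gegenbauer–type family of the same shape as Juhl's $D_N^c(\lambda)$ — and together with the reduction of the previous paragraph it gives existence, uniqueness and the spanning statement for all $\lambda\in\c$.

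The one genuinely technical point is the heart of the last step: fixing coordinates on $\mathfrak{n}_{n+1}^-$ adapted to the embedding, writing out the $\mathfrak{n}_n^+$–action on $S(\mathfrak{n}_{n+1}^-)v_\lambda$, and extracting the recursion with the precise $\gamma_{j+1}$ and $\delta_j$ — in particular verifying $\delta_j\neq 0$, which is exactly what makes uniqueness hold at the reducibility points and forces the polynomial dependence on $\lambda$. As a cross–check, and an alternative route, I note that the whole statement depends only on complexified data, and that the pair $\bigl(\mathfrak{o}(2,n-1)_{\c}\subset\mathfrak{o}(2,n)_{\c},\ \text{conformal parabolic}\bigr)$ coincides with the pair $\bigl(\mathfrak{o}(1,n)_{\c}\subset\mathfrak{o}(1,n+1)_{\c},\ \text{conformal parabolic}\bigr)$ underlying \cite{juhl_conform}; hence existence, uniqueness, polynomiality and the explicit $\mathcal{D}^{0}_N(\lambda)$ may also be transported from there.
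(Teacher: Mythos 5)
Your argument is correct and is, modulo language, the paper's own: the three annihilation conditions you impose on the singular vector $w=Fv_\lambda$ are precisely the commutator conditions \eqref{villkorA}--\eqref{villkorC} the paper imposes on $F\in\mathcal{U}(\mathfrak{n}_{n+1}^-)$, your appeal to the first fundamental theorem for the $\mathfrak{m}_n$-invariants is Proposition \ref{kommprop}, and your two-term recursion with nonvanishing $\delta_j$ (giving a one-dimensional solution space for every $\lambda$, polynomial in $\lambda$) is exactly the content of Theorems \ref{alg_core}, \ref{alg_core2} and \ref{thm_unikhet}. Your closing complexification remark, that the pair identifies over $\c$ with $\mathfrak{o}(1,n)\subset\mathfrak{o}(1,n+1)$ as treated in \cite{juhl_conform}, is also a legitimate shortcut and is essentially acknowledged in Section \ref{algebra}.
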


In addition to the existence, we will actually show how to find
explicit formulas for the intertwining families
$\mathcal{D}^{0}_N(\lambda)$. Now viewing
$\mathcal{D}^{0}_{N}(\lambda) \in \mathcal{U}(\mathfrak{n}_{n+1}^-)$
as a left-invariant differential operator acting on
$C^{\infty}(G^{n+1})$ from the right, we obtain the following result.

\begin{theorem}\label{I-2} (a) For any non-negative integer $N$ the 
polynomial family \newline $\mathcal{D}^{0}_{N}(\lambda) \in
\mathcal{U}(\mathfrak{n}_{n+1}^-)$ induces a family of left
$G^{n+1}$-equivariant operators
$$ 
D'_N(\lambda): \Ind_{P^{n+1}}^{G^{n+1}}(\xi_{\lambda}) \rightarrow
\Ind_{P^n}^{G^{n+1}}(\xi_{\lambda-N} \otimes \sigma),
$$ 
where for even $N$, $\sigma$ is the trivial representation of $M$,
and for odd $N$, $\sigma = \sigma_-$ (see section \ref{principal}).
The composition $D_N(\lambda) = i^* \circ D'_N(\lambda)$ of
$D'_N(\lambda)$ with the restriction map $i^*: C^\infty(G^{n+1})
\rightarrow C^\infty(G^n)$ defines a family of left $G^n$-equivariant
operators
\begin{equation}\label{D-even}
D_N(\lambda): \Ind_{P^{n+1}}^{G^{n+1}}(\xi_{\lambda}) \rightarrow
\Ind_{P^n}^{G^{n}}(\xi_{\lambda-N} \otimes \sigma).
\end{equation} \\
(b) For any non-negative integer $N$ the polynomial family
$\mathcal{D}^{0}_{N}(\lambda) \in \mathcal{U}(\mathfrak{n}_{n+1}^-)$
induces a family of left $G^{n+1}$-equivariant operators
$$
\hat{D}^{'}_N(\lambda): \Ind_{\PHAT^{n+1}}^{G^{n+1}}(\xi_{\lambda}) \rightarrow
\Ind_{\PHAT^n}^{G^{n+1}}(\xi_{\lambda-N}).
$$ 

The composition $\hat{D}_N(\lambda) = i^* \circ \hat{D}'_N(\lambda)$
of $\hat{D}'_N(\lambda)$ with the restriction map $i^*:
C^{\infty}(G^{n+1}) \rightarrow C^{\infty}(G^n)$ defines a a family of
left $G^n$-equivariant operators
\begin{equation}\label{D-all2}
\hat{D}_N(\lambda): \Ind_{\PHAT^{n+1}}^{G^{n+1}}(\xi_{\lambda}) \rightarrow
\Ind_{\PHAT^n}^{G^{n}}(\xi_{\lambda-N}).
\end{equation}
\end{theorem}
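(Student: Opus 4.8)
The plan is to derive everything from Theorem \ref{I-1} through the standard duality between homomorphisms of generalized Verma modules and equivariant differential operators between (smoothly) induced representations, used in the same spirit as in \cite{juhl_conform}. Realize $\Ind_{P^{n+1}}^{G^{n+1}}(\xi_\lambda)$ as a space of smooth functions on $G^{n+1}$ with the appropriate right $P^{n+1}$-equivariance, and let $D'_N(\lambda)=dR(\mathcal D^0_N(\lambda))$ be the operator on $C^\infty(G^{n+1})$ obtained by letting $\mathcal D^0_N(\lambda)\in\mathcal U(\mathfrak n_{n+1}^-)\subset\mathcal U(\mathfrak g_{n+1})$ act as a left-invariant differential operator, i.e.\ via the right regular representation extended to $\mathcal U(\mathfrak g_{n+1})$. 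Since left-invariant differential operators commute with left translations, $D'_N(\lambda)$ intertwines the left regular representation of $G^{n+1}$ with itself; hence it is automatically left $G^{n+1}$-equivariant once we know it preserves the relevant induced subspaces, and $D_N(\lambda)$, $\hat D_N(\lambda)$ below will be left $G^n$-equivariant because $i^*$ intertwines the left regular representations of $G^n$ (as $i$ is a group homomorphism). Polynomiality in $\lambda$ of all these operators is inherited from that of $\mathcal D^0_N(\lambda)$.

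First I would verify target membership for part (a): that $D'_N(\lambda)$ maps $\Ind_{P^{n+1}}^{G^{n+1}}(\xi_\lambda)$ into $\Ind_{P^n}^{G^{n+1}}(\xi_{\lambda-N}\otimes\sigma)$. For the identity component of $P^n=M^nA(N^+)^n$ this amounts to infinitesimal right $\mathfrak p_n$-equivariance of $D'_N(\lambda)f$ with the character $\xi_{\lambda-N}$, which is exactly the content of Theorem \ref{I-1}: the assertion that $\mathcal D^0_N(\lambda)\otimes1$ is a $\mathfrak p_n$-singular vector of weight $\xi_{\lambda-N}$ in $\mathcal M_\lambda(\mathfrak g_{n+1})$ translates, under the right regular action on functions in $\Ind_{P^{n+1}}^{G^{n+1}}(\xi_\lambda)$, into precisely the required right $P_0^n$-equivariance with that character (the shift $\lambda\mapsto\lambda-N$ being forced by the $\mathrm{ad}(H)$-weight $-N$ of $\mathcal D^0_N(\lambda)$). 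It then remains to treat the component group. Since $\pi_0(P^n)=\pi_0(M^n)$ and the coset of $\hat P^n$ inside $P^n$ is represented by the involution $\epsilon$ acting on $\mathbb R^{n+1}$ by $-1$ on the hyperbolic plane spanned by the distinguished isotropic vector and its dual and by $+1$ on the orthogonal complement, one has $\epsilon\in M^n$ (it centralizes $H$) and $\Ad(\epsilon)=-\mathrm{id}$ on $\mathfrak n_{n+1}^-$. Because $\mathfrak n_{n+1}^\pm$ are abelian and $\mathcal D^0_N(\lambda)$, having $\mathrm{ad}(H)$-weight $-N$, is homogeneous of degree $N$ in $\mathfrak n_{n+1}^-$, it follows that $\Ad(\epsilon)\mathcal D^0_N(\lambda)=(-1)^N\mathcal D^0_N(\lambda)$. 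Hence for even $N$ the image of $D'_N(\lambda)$ is genuinely right $P^n$-equivariant ($\sigma$ trivial), while for odd $N$ it transforms under the $\epsilon$-coset by $-1$, i.e.\ by $\sigma=\sigma_-$ in the notation of Section \ref{principal}. For $\hat P^n$ the same computation applies with the $\epsilon$-coset removed — $\epsilon\notin\hat P^n$ — so no parity obstruction arises, $\hat D'_N(\lambda):=dR(\mathcal D^0_N(\lambda))$ sends $\Ind_{\hat P^{n+1}}^{G^{n+1}}(\xi_\lambda)$ into $\Ind_{\hat P^n}^{G^{n+1}}(\xi_{\lambda-N})$ for every $N$, and the first halves of (a) and (b) follow.

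Next I would pass to $G^n$. Under the embedding $i:G^n\hookrightarrow G^{n+1}$ of \eqref{inkl} we have $P^n=P^{n+1}\cap G^n$ and $\hat P^n=\hat P^{n+1}\cap G^n$, since an element of $G^n$ fixes the distinguished line (resp.\ ray) in $\mathbb R^{n+1}$ if and only if it fixes the same line (resp.\ ray) in $\mathbb R^{n+2}$. Consequently the pullback $i^*:C^\infty(G^{n+1})\to C^\infty(G^n)$ carries $\Ind_{P^n}^{G^{n+1}}(\xi_{\lambda-N}\otimes\sigma)$ into $\Ind_{P^n}^{G^n}(\xi_{\lambda-N}\otimes\sigma)$, and likewise for $\hat P^n$: right $P^n$-equivariance of a function on $G^{n+1}$ restricts to right $P^n$-equivariance of its restriction to $G^n$. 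Moreover $i^*$ intertwines the left regular representations of $G^n$, and the composition of a differential operator on $G^{n+1}$ with restriction along the submanifold $i(G^n)$ is again a differential operator in the sense of families $C^\infty(G^{n+1})\to C^\infty(G^n)$. Hence $D_N(\lambda)=i^*\circ D'_N(\lambda)$ and $\hat D_N(\lambda)=i^*\circ\hat D'_N(\lambda)$ are left $G^n$-equivariant differential operators of the types \eqref{D-even} and \eqref{D-all2}, which completes the proof.

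Given Theorem \ref{I-1}, I expect the real work to lie in the last part of the second step: pinning down $\pi_0(P^n)$, the representative $\epsilon$ and its action on $\mathfrak n_{n+1}^-$, and matching the resulting sign character $(-1)^N$ on $\mathcal D^0_N(\lambda)$ with the $M^n$-representation ($\sigma$ trivial, resp.\ $\sigma_-$) appearing in the target — together with keeping the duality conventions between $\mathcal M_\lambda(\mathfrak g_{n+1})$ and $\Ind(\xi_\lambda)$ consistent, since these are what fix the direction $\lambda\mapsto\lambda-N$ of the shift. Everything else — left invariance of the constructed operator, the singular-vector reformulation of right $P_0^n$-equivariance, and the behaviour under $i^*$ — is the standard generalized-Verma-module/induced-representation dictionary.
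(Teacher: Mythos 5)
Your overall strategy — derive Theorem~\ref{I-2} from Theorem~\ref{I-1} via the standard Verma-module/induced-representation dictionary, handle the connected part of $P^n$ via the Lie algebra, and then sort out the component group — is reasonable and broadly parallel to what the paper does (the paper instead reduces both theorems independently to the algebraic conditions of Theorem~\ref{thm_unikhet} and then checks $w_1,w_2,J_{(n)}$ explicitly in the proofs of Theorems~\ref{HJA}--\ref{HUVA2}). Your choice of representative $\epsilon = \operatorname{diag}(-1,1,1,-1,1,\dots,1)$ is also a nicer computation than the paper's $J_{(n)}$, because $\operatorname{Ad}(i(\epsilon))$ really is $-\mathrm{id}$ on all of $\mathfrak{n}_{n+1}^{-}$, so the sign $(-1)^N$ falls out from homogeneity alone.

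However, there is a genuine gap in the component-group step. You correctly note that the infinitesimal (Verma) equivariance only controls the identity component $P_0^n = M_0^n A (N^+)^n$, but you then reduce the rest to checking the single coset $P^n/\hat P^n$, and for part (b) you assert that ``no parity obstruction arises'' and nothing beyond the Lie algebra is needed. This is not correct: $\hat M^n = \langle w_1,w_2\rangle M_0^n$ is already disconnected, with $w_1 = \operatorname{diag}(1,-1,1,\dots)$ and $w_2 = \operatorname{diag}(1,1,-1,1,\dots)$ lying outside $M_0^n$, so $\hat P^n$-equivariance is \emph{not} a consequence of $\mathfrak{p}_n$-equivariance. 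One must separately verify $\operatorname{Ad}(i(w_1))\mathcal D_N^0(\lambda) = \mathcal D_N^0(\lambda)$ and $\operatorname{Ad}(i(w_2))\mathcal D_N^0(\lambda) = \mathcal D_N^0(\lambda)$ (as the paper does in \eqref{viktigfor}). The homogeneity-of-degree-$N$ argument that you use for $\epsilon$ does not give these: $\operatorname{Ad}(i(w_1))$ and $\operatorname{Ad}(i(w_2))$ act by $-1$ on a single basis vector of $\mathfrak{n}_{n+1}^{-}$ and by $+1$ on the rest, so invariance uses the finer information that $\mathcal D_N^0(\lambda)$ is a polynomial in $\Delta_{n-1}$ and $T_n$ (Theorem~\ref{alg_core}/\ref{thm_unikhet}), not merely that it is homogeneous. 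With this additional check both parts (a) and (b) close up; without it, the passage from ``$\mathfrak{p}_n$-equivariance'' to ``$\hat P^n$-equivariance'' is unjustified, since $\pi_0(\hat P^n)$ is nontrivial.
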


Using
$$ 
\EIN_{m-1} \simeq G^m/P^m \quad \mbox{and} \quad
\widehat{\EIN}_{m-1} \simeq G^m/\hat{P}^m,
$$ 
the representation spaces of spherical principal series can be identified with
$$
C^{\infty}(\EIN_{m-1}) \quad \mbox{and} \quad C^{\infty}(\widehat{\EIN}_{m-1}),
$$ respectively. The notation $C^{\infty}(\EIN_m)_\lambda$ and
$C^{\infty}(\widehat{\EIN}_m)_\lambda$ emphasizes the respective
module structure. In these terms, Theorem \ref{I-1} admits the
following interpretation.

\begin{theorem}\label{I-3} (a) For any even integer $N \geq 0$ the polynomial 
family \eqref{D-even} defines a family of $G^n$-equivariant operators
$$ 
D_N^{\mathrm{c}}(\lambda): C^{\infty}(\EIN_n)_\lambda \rightarrow
C^{\infty}(\EIN_{n-1})_{\lambda-N}. 
$$ 
(b) For any non-negative integer $N$ the polynomial family
\eqref{D-all2} defines a family of $G^n$-equivariant operators
$$
\hat{D}_N^{\mathrm{c}}(\lambda): C^{\infty}(\widehat{\EIN}_n)_\lambda
\rightarrow C^{\infty}(\widehat{\EIN}_{n-1})_{\lambda-N}.
$$
\end{theorem}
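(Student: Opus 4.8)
The plan is to deduce Theorem~\ref{I-3} from Theorem~\ref{I-2} by passing, via the compact picture of induced representations, from the spaces $\Ind_{\hat P^m}^{G^m}(\xi_\lambda)$ and $\Ind_{P^m}^{G^m}(\xi_\lambda)$ to the function spaces $C^\infty(\widehat{\EIN}_{m-1})$ and $C^\infty(\EIN_{m-1})$, and then keeping track of how the inducing character behaves under the two-to-one covering $\widehat{\EIN}_{m-1}\to\EIN_{m-1}$.

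First I would spell out the compact picture. For $m\ge 4$ the group $G^m=O(2,m-1)$ has maximal compact subgroup $K^m=O(2)\times O(m-1)$; $K^m$ acts transitively on $G^m/\hat P^m\cong\widehat{\EIN}_{m-1}$ and on $G^m/P^m\cong\EIN_{m-1}$, and restriction of functions from $G^m$ to $K^m$ identifies $\Ind_{\hat P^m}^{G^m}(\xi_\lambda)$ with $C^\infty\bigl(K^m/(K^m\cap\hat P^m)\bigr)\cong C^\infty(\widehat{\EIN}_{m-1})$. Transporting the left regular $G^m$-action through this identification and through the diffeomorphism $G^m/\hat P^m\cong\widehat{\EIN}_{m-1}$ gives, by definition, the spherical principal series action $\pi_\lambda^c$ on $C^\infty(\widehat{\EIN}_{m-1})$, i.e.\ the module $C^\infty(\widehat{\EIN}_{m-1})_\lambda$. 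Since $\xi_\lambda$ is trivial on $M^m$, the same recipe applied to $P^m$ yields $\Ind_{P^m}^{G^m}(\xi_\lambda)\cong C^\infty(\EIN_{m-1})_\lambda$; more generally, $\Ind_{P^m}^{G^m}(\xi_\lambda\otimes\tau)$ is realised on $C^\infty(\EIN_{m-1})$ exactly when the $M^m$-representation $\tau$ is trivial, the point being that the associated homogeneous line bundle over $\EIN_{m-1}$ is then trivial (for nontrivial $\tau$ one instead gets the smooth sections of a nontrivial line bundle, equivalently the $\tau$-equivariant functions on the double cover $\widehat{\EIN}_{m-1}$).

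Next, for part~(b) I would simply feed these identifications into Theorem~\ref{I-2}(b). Taking $m=n+1$, the source $\Ind_{\hat P^{n+1}}^{G^{n+1}}(\xi_\lambda)$ becomes $C^\infty(\widehat{\EIN}_n)_\lambda$ as a $G^{n+1}$-module, hence as a $G^n$-module by restriction; taking $m=n$, the target $\Ind_{\hat P^n}^{G^n}(\xi_{\lambda-N})$ becomes $C^\infty(\widehat{\EIN}_{n-1})_{\lambda-N}$. Under these identifications $\hat D_N(\lambda)=i^*\circ\hat D_N'(\lambda)$ is obtained from $\hat D_N'(\lambda)$, which is built from $\mathcal{D}_N^0(\lambda)\in\mathcal{U}(\mathfrak{n}_{n+1}^-)$, by restricting functions along the embedding $\widehat{\EIN}_{n-1}\hookrightarrow\widehat{\EIN}_n$ induced by \eqref{inkl}; hence it is a differential operator $C^\infty(\widehat{\EIN}_n)\to C^\infty(\widehat{\EIN}_{n-1})$, polynomial in $\lambda$, and its $G^n$-equivariance is exactly the conclusion of Theorem~\ref{I-2}(b). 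Setting $\hat D_N^c(\lambda)$ equal to this operator proves~(b). For part~(a) the argument is identical except for the target: in Theorem~\ref{I-2}(a) it is $\Ind_{P^n}^{G^n}(\xi_{\lambda-N}\otimes\sigma)$, and for even $N$ one has $\sigma$ trivial, so by the previous paragraph the target is $C^\infty(\EIN_{n-1})_{\lambda-N}$ while the source $\Ind_{P^{n+1}}^{G^{n+1}}(\xi_\lambda)$ is $C^\infty(\EIN_n)_\lambda$; thus $D_N^c(\lambda):=D_N(\lambda)$ is the asserted $G^n$-equivariant family. (For odd $N$, $\sigma=\sigma_-$ is nontrivial and the target is not $C^\infty(\EIN_{n-1})$ — this is the representation-theoretic counterpart of the non-descent of odd-order families discussed in the introduction, and the reason part~(a) is stated only for even $N$.)

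The one point that requires genuine verification rather than mere quotation is the identification of module structures in the second paragraph: that transporting the left regular representation of $G^m$ on $\Ind_{\hat P^m}^{G^m}(\xi_\lambda)$ to $C^\infty(\widehat{\EIN}_{m-1})$ indeed reproduces the conformal action $\pi_\lambda^c$ with the normalisation used in the statement, and likewise over $\EIN_{m-1}$. This is the standard cocycle computation: writing $g^{-1}k=k'\,\hat p(g,k)$ with $k'\in K^m$ and $\hat p(g,k)\in\hat P^m$, the transported action is $(\pi_\lambda(g)f)(k)=\xi_\lambda(\hat p(g,k))^{-1}f(k')$, and one checks that $\xi_\lambda(\hat p(g,k))$ is the appropriate power of the conformal factor of $g$ at the point $k\hat P^m\in\widehat{\EIN}_{m-1}$; the $P^m$-version runs verbatim, the only extra bookkeeping being the component group of $M^m$, which is precisely what produces the parity condition on $N$ used above. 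Granting this, the theorem is a formal consequence of Theorem~\ref{I-2} together with the definitions of $C^\infty(\EIN_m)_\lambda$ and $C^\infty(\widehat{\EIN}_m)_\lambda$.
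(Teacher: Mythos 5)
Your proposal is correct and follows essentially the same route the paper takes. The paper treats Theorem~\ref{I-3} as an immediate consequence of Theorem~\ref{I-2}: the identifications $\Ind_{\hat P^m}^{G^m}(\xi_\lambda)\cong C^\infty(\widehat{\EIN}_{m-1})_\lambda$ and $\Ind_{P^m}^{G^m}(\xi_\lambda)\cong C^\infty(\EIN_{m-1})_\lambda$ are set up in Section~\ref{principal} via the restriction maps $\hat\delta$ and $\delta$ and the bijections \eqref{bij-1}, \eqref{bij-2}, and the module structure $\pi_\lambda^{\text{c}}$ is defined by \eqref{EINMOD}; your ``cocycle computation'' is exactly the content of those paragraphs, and the parity-of-$N$ bookkeeping you mention is the paper's observation that $\sigma=\sigma_+$ is trivial for even $N$ (from Theorem~\ref{I-2}(a)/Theorem~\ref{SIGMA}), so that $\Ind_{P^n}^{G^n}(\xi_{\lambda-N}\otimes\sigma)$ is realized on functions on $\EIN_{n-1}$ rather than on sections of a nontrivial line bundle.
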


Now let $\mathbb{M}^{n}$ be the Minkowski space with the Lorentzian
metric
\begin{equation}
ds^2 = -dx_1^2 + dx_2^2 + \dots + dx_{n}^2.
\label{minkowskimetric}
\end{equation}
The families $D_N^c(\lambda)$ in Theorem \ref{I-3} give rise to
families
\begin{equation}
D_N^{\text{nc}}(\lambda): C^{\infty}(\mathbb{M}^n) \rightarrow
C^{\infty}(\mathbb{M}^{n-1})
\end{equation}
as follows. $\EIN_m$ is conformally flat. More precisely, the
composition of the inclusion $(N^-)^m \rightarrow G^m$ with the
projection $G^m \rightarrow G^m/P^m$ defines a conformal
embedding $j: \mathbb{M}^{m-1} \to \EIN_{m-1}$. Here we identify
$\mathbb{M}^{m-1} \simeq (N^-)^m$ (see \eqref{IDENT}) and
$\text{Ein}_{m-1} \simeq G^m/P^m$. The embedding $j$ yields a
well-known conformal compactification of Minkowski space; see also \cite{barbot}. Now the
families $D_N^{\text{nc}}(\lambda)$ are defined by restriction to the
open sets $j(\mathbb{M}^m) \subset \EIN_m$.

The family $D_{2N}^{\text{nc}}(\lambda)$ is a polynomial in the
Laplacian $\Delta_{\mathbb{M}^{n-1}}$ and $\partial^2/\partial x_n^2$
 with polynomial coefficients in $\lambda$. In particular,
$$ 
D_{2N}^{\text{nc}}\left(-\frac{n}{2}\!+\!N\right) = i^*
\Delta_{\mathbb{M}^{n}}^N \quad \mbox{and} \quad
D_{2N}^{\text{nc}}\left(-\frac{n\!-\!1}{2}\!+\!N\right) =
\Delta_{\mathbb{M}^{n-1}}^N i^*.
$$ 
where $i$ is the inclusion map $\mathbb{M}^{n-1} \hookrightarrow \mathbb{M}^n$.

The powers $(\Delta_{\mathbb{M}^m})^k$ of the Laplacian
$\Delta_{\mathbb{M}^m}$ on flat Minkowski space are very special
cases of the conformally covariant powers of the Laplacian of
pseudo-Riemannian manifolds constructed in \cite{GJMS}. Their
$O(2,m)$-equivariance is a consequence of their conformal covariance; see
also \cite{kostant}, \cite{jakobsen_vergne}.

Now we turn to an alternative construction of the families
$D_N^{\text{nc}}(\lambda)$ in terms of the asymptotics of
eigenfunctions of the Laplacian on anti de Sitter space $\text{AdS}_n$. In
the language of \cite{juhl_conform} these are the residue families
$D_N^{\text{res}}(g, \lambda)$, where $g$ is
the metric \eqref{minkowskimetric}.

On the
$n$-dimensional Lorentzian upper half space with the metric
$$
x_n^{-2} \left( - dx_1^2 + \sum_{i=2}^{n} d x_i^2 \right)
$$ 
we consider formal approximate eigenfunctions of the Laplacian with
eigenvalue $\lambda(n-1-\lambda)$. The ansatz
$$
u(x) \sim \sum_{j \geq 0} x_n^{\lambda+2j} c_{2j}(x') , \,\,\,\, x = (x',x_n),
$$ 
yields recursive relations for the coefficients $c_{2j}$ so that
all coefficients are determined by the leading one $c_0$. More
precisely, there are differential operators
$$ 
T_{2j}(\lambda): C^{\infty}(\mathbb{R}^{n-1}) \rightarrow C^{\infty}(\mathbb{R}^{n-1})
$$ 
of order $2j$ (depending on $\lambda$) such that $T_{2j}(\lambda) c_{0} =
c_{2j}$. It is easily seen that
\begin{equation}\label{DEFS2}
T_{2j}(\lambda) = A_{2j}(\lambda) (\Delta_{\mathbb{M}^{n-1}})^{j},
\end{equation}
where the coefficients $A_{2j}$ satisfy the recursive relations
\begin{equation}
A_{2j-2}(\lambda) + 2j(2j+2\lambda+1-n) A_{2j}(\lambda) = 0, \; A_0(\lambda)=1.
\end{equation}
Here the d'Alembertian
$$
\Delta_{\mathbb{M}^{n-1}} = -\frac{\partial^2}{\partial x_1^2} +
\sum_{i=2}^{n-1} \frac{\partial^2}{\partial x_i^2}
$$ 
is the Laplacian of Minkowski space. Let
$$
S_{2N}(\lambda): C^{\infty}(\mathbb{R}^n) \rightarrow C^{\infty}(\mathbb{R}^{n-1})
$$
be defined by
\begin{equation}\label{DEFS}
S_{2N}(\lambda) = \sum_{j=0}^{N} \frac{1}{(2N\!-\!2j)!} T_{2j}(\lambda) i^* 
\left(\frac{\partial}{\partial x_n} \right)^{2N-2j}.
\end{equation}
The following theorem shows that both constructions yield the same
results (up to normalization).

\begin{theorem}\label{MAIN} The families $S_{2N}(\lambda+n-1-2N)$ and 
$D_{2N}^{\mathrm{nc}}(\lambda)$ coincide, up to a rational function in
  $\lambda$.
\end{theorem}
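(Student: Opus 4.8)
The plan is to deduce the equality, up to a rational factor, from the uniqueness clause of Theorem~\ref{I-1}. Applied with the even integer $2N$ in place of $N$, that clause yields, for generic $\lambda$, a one-dimensional space of homomorphisms $\mathcal{M}_{\lambda-2N}(\mathfrak{g}_n)\to\mathcal{M}_{\lambda}(\mathfrak{g}_{n+1})$. Under the standard correspondence between such homomorphisms of generalized Verma modules and equivariant differential operators between the associated induced representations — a differential operator being a finite-order object, hence dual to (a piece of) a generalized Verma module, so that $\mathfrak{g}_n$-equivariance is precisely the singular-vector condition — and restricting to the open Bruhat cells, identified with $\mathbb{M}^n\simeq(N^-)^{n+1}$ and $\mathbb{M}^{n-1}\simeq(N^-)^n$ via \eqref{IDENT}, it follows that for generic $\lambda$ the space of differential operators $C^\infty(\mathbb{M}^n)\to C^\infty(\mathbb{M}^{n-1})$ intertwining the infinitesimal conformal action of $\mathfrak{o}(2,n-1)$ on $C^\infty(\mathbb{M}^n)_\lambda$ and $C^\infty(\mathbb{M}^{n-1})_{\lambda-2N}$ is at most one-dimensional. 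By Theorem~\ref{I-3}(a) it contains $D_{2N}^{\mathrm{nc}}(\lambda)$, which is not identically zero since $D_{2N}^{\mathrm{nc}}(-\tfrac n2+N)=i^*\Delta_{\mathbb{M}^n}^N\neq0$. Hence, once $S_{2N}(\lambda+n-1-2N)$ is known to lie in the same space for generic $\lambda$, we obtain $S_{2N}(\lambda+n-1-2N)=r(\lambda)\,D_{2N}^{\mathrm{nc}}(\lambda)$ with $r(\lambda)$ a scalar, and only the rationality of $r$ remains.

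The equivariance of $S_{2N}(\lambda+n-1-2N)$ is the crux and the step I expect to be the main obstacle: it amounts to the assertion that $S_{2N}$, with the indicated parameter shift, is the residue family of \cite{juhl_conform} for the flat Lorentzian metric \eqref{minkowskimetric} and the hypersurface $\mathbb{M}^{n-1}\subset\mathbb{M}^n$, for which equivariance is built in. Concretely, the half-space model of $\ADS_n$ above is conformal to the flat upper half-space $\{x_n>0\}\subset\mathbb{M}^n$, and its isometry group is exactly $G^n=O(2,n-1)$, acting on $\mathbb{M}^n$ by (local) conformal transformations preserving the hypersurface $\{x_n=0\}=\mathbb{M}^{n-1}$ — this is the copy of $G^n$ inside the conformal group $O(2,n)$ of $\mathbb{M}^n$ that stabilises $\mathbb{M}^{n-1}$. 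One first shows that the formal solution operator
$$
c_0\ \longmapsto\ \sum_{j\ge0} x_n^{\,\nu+2j}\,T_{2j}(\nu)\,c_0
$$
for the equation $\Delta_{\ADS_n}u=\nu(n-1-\nu)u$ intertwines a spherical principal series of $G^n$ on $C^\infty(\mathbb{M}^{n-1})$ with the $G^n$-action, by pull-back, on formal eigenfunctions on $\ADS_n$; equivalently, the recursion $A_{2j-2}(\nu)+2j(2j+2\nu+1-n)A_{2j}(\nu)=0$ of \eqref{DEFS2} is forced by this equivariance together with the conformal transformation law of $\Delta_{\mathbb{M}^{n-1}}$. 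Then $S_{2N}$, assembled from these $T_{2j}$ and the normal Taylor data of a function on $\mathbb{M}^n$ as in \eqref{DEFS}, is realised as a constant multiple of the residue, at an exceptional value of the auxiliary spectral parameter, of a meromorphic family of $\mathfrak{g}_n$-equivariant differential operators; since a residue of such a family is again equivariant, this gives the claim. (Alternatively one checks the equivariance directly: $S_{2N}$ has constant coefficients and an $O(1,n-2)$-invariant leading part, hence automatically intertwines the translation and Lorentz parts of the action, and is homogeneous of degree $-2N$, hence intertwines the dilation between appropriately shifted weights for every value of its parameter; the remaining relations, equivariance under the special conformal generators, reduce — after using the conformal transformation law of $\Delta_{\mathbb{M}^{n-1}}$ — to the recursion of \eqref{DEFS2}, and this is also what fixes the value $\nu=\lambda+n-1-2N$ of the spectral parameter relative to the weight $\lambda$ carried by $C^\infty(\mathbb{M}^n)_\lambda$.) The bookkeeping here, together with the explicit identification of the residue with $S_{2N}$, is the technical heart of the argument.

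Finally, $r$ is rational because both $S_{2N}(\lambda+n-1-2N)$ and $D_{2N}^{\mathrm{nc}}(\lambda)$ are polynomials in $\Delta_{\mathbb{M}^{n-1}}$ and $\partial^2/\partial x_n^2$ with coefficients that are rational, resp.\ polynomial, in $\lambda$, so that $r$ is the quotient of two corresponding coefficients. Explicitly, the coefficient of $i^*(\partial/\partial x_n)^{2N}$ in $S_{2N}(\lambda)$ is the constant $1/(2N)!$ (as $T_0=A_0=1$), whereas in $D_{2N}^{\mathrm{nc}}(\lambda)$ it is a polynomial $p(\lambda)$, not identically zero by the stated leading behaviour, whence $r(\lambda)=\bigl((2N)!\,p(\lambda)\bigr)^{-1}\in\mathbb{C}(\lambda)$. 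As an independent check of both the agreement and the shift one may put $\lambda=-\tfrac n2+N$: then $D_{2N}^{\mathrm{nc}}(\lambda)=i^*\Delta_{\mathbb{M}^n}^N=i^*(\Delta_{\mathbb{M}^{n-1}}+\partial^2/\partial x_n^2)^N$, and comparing with the explicit formula \eqref{DEFS} for $S_{2N}(\tfrac{n-2}2-N)$ exhibits the two operators as proportional polynomials in $\Delta_{\mathbb{M}^{n-1}}$ and $\partial^2/\partial x_n^2$.
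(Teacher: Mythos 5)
Your strategy is genuinely different from the paper's, and it has a real gap at exactly the step you flag as ``the crux.'' The paper does not argue via uniqueness of intertwiners at all. Its proof of Theorem \ref{MAIN} (= Theorem \ref{LIKAFAMILJ}) is a short direct calculation: both $S_{2N}(\lambda)$ and $D_{2N}^{\mathrm{nc}}(\lambda)$ are already known to be of the form $\sum_{j=0}^N c_j(\lambda)(\Delta_{\mathbb{M}^{n-1}})^j i^*(\partial/\partial x_n)^{2N-2j}$ with coefficients satisfying explicit two-term recursions (\eqref{rekurr}/\eqref{Brekurr} for $B_{2j}$, and \eqref{koeff_villkor} in its normalized form for $a_j$), and the whole proof consists of observing that after the substitution $\lambda \mapsto \lambda+n-1-2N$ the quotient $B_{2j}(\lambda+n-1-2N)/a_j(\lambda)$ is independent of $j$, then evaluating $B_{2N}$ explicitly. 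No equivariance statement about $S_{2N}$ is invoked or needed.

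Your route, by contrast, hinges on first establishing that $S_{2N}(\lambda+n-1-2N)$ is a $\mathfrak{g}_n$-intertwiner for the same pair of principal series that $D_{2N}^{\mathrm{nc}}(\lambda)$ intertwines, and then quoting the one-dimensionality supplied by Theorem \ref{I-1}. You are candid that this equivariance is ``the technical heart'' and you only sketch two possible strategies for it. Neither sketch closes the gap as written. The appeal to \cite{juhl_conform} is not available as a black box here: that reference proves conformal covariance of residue families in the Riemannian setting (hyperbolic space, $O(1,n)$), whereas the present paper's whole purpose is to establish the Lorentzian/$\mathrm{AdS}$ analogue, so invoking it would be circular in this context. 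The ``direct check'' you outline --- homogeneity gives the dilation weights, constant coefficients and $O(1,n-2)$-invariance give translations and Lorentz rotations, and equivariance under $\mathfrak{n}^+$ reduces via the conformal transformation law of $\Delta_{\mathbb{M}^{n-1}}$ to the recursion \eqref{DEFS2} --- is plausible, but carrying it out is precisely the computation that Theorem \ref{alg_core} encodes for $D_N^{\mathrm{nc}}$; you would in effect be re-deriving the recursion \eqref{koeff_villkor} and then matching it to \eqref{DEFS2} under the parameter shift, which is exactly what the paper's much shorter coefficient comparison does. So filling your gap by the direct route collapses back into the paper's argument with extra overhead, while filling it by the residue-family route requires a Lorentzian covariance theorem you would first have to prove.

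The surrounding pieces of your proposal are fine and, once the gap were filled, would work: the uniqueness clause of Theorem \ref{I-1} (together with Theorems \ref{I-2} and \ref{HJA}, which translate Verma-module homomorphisms into equivariant differential operators) does give the proportionality, nonvanishing of $D_{2N}^{\mathrm{nc}}$ is correctly read off at $\lambda=-n/2+N$, and your observation that $r(\lambda)$ is the quotient of the coefficients of $i^*(\partial/\partial x_n)^{2N}$ in the two families gives rationality. But as it stands, the argument proves less than it claims, since the central assertion that $S_{2N}(\lambda+n-1-2N)$ is equivariant is left as a programme, not a proof.
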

Moreover, if we define
$$
S_{2N+1}(\lambda) = \sum_{j=0}^{N} \frac{1}{(2N\!+1-\!2j)!} T_{2j}(\lambda) i^* 
\left( \frac{\partial}{\partial x_n} \right)^{2N+1-2j},
$$
we get the following analogous result:
\begin{theorem}
The families $S_{2N+1}(\lambda+n-1-(2N+1))$ and
$D_{2N+1}^{\mathrm{nc}}(\lambda)$ coincide, up to a rational function in $\lambda$.
\end{theorem}

We expect that the analogous construction of equivariant differential operators
 generalizes to pseudo-Riemannian spaces with
constant negative curvature of any signature. This would give
differential intertwining operators between spherical principal series
representations of $O(p,q)$ and $O(p,q+1)$ induced from maximal parabolic
subgroups, for all $q > p \geq 1$, analogous to the operator families
treated in this paper.

The paper is organized as follows. In Section \ref{prel} we describe the
geometric situation, the structure of the Lie groups involved and the
principal series representations induced by representations of the maximal
parabolic subgroup $P^m$. In Section \ref{algebra} we treat the invariance of the
d'Alembertian and describes algebraic constructions
which are necessary for the proof of equivariance of the differential
operators $D_N(\lambda)$. Section \ref{verma} contains the proof of Theorem
\ref{I-1} and Section \ref{ind-fam} contains the proof of Theorem
\ref{I-2}. Section \ref{formal} describe the construction of the
families $D_N^{\text{nc}}(\lambda)$ in terms of the asymptotics of
eigenfunctions of the Laplacian on anti de Sitter space.



\section{Geometric preliminaries and principal series
  representations}\label{prel}\label{principal}

The $n$-dimensional anti de Sitter space, $\text{AdS}_{n}$, is the
one-sheeted hyperboloid
\begin{equation}\label{hyp}
-t_1^2-t_2^2+x_1^2+\cdots+x_{n-1}^2=-1
\end{equation}
in the space $\mathbb{R}^{n+1}$ equipped with the pseudo-Riemannian metric
$$
ds^2 = -dt_1^2 -dt_2^2 + dx_1^2 + \cdots + dx_{n-1}^2.
$$ 
The group of isometries of $\text{AdS}_{n}$ is $O(2,n-1)$, that is,
all linear transformations of $\mathbb{R}^{n+1}$ that preserve the
hyperboloid \eqref{hyp}. We let $G^n = O(2,n-1)$. Furthermore, we
assume that $n \geq 4$.

The Lie algebra $\mathfrak{g}_n=\mathfrak{g}=\mathfrak{o}(2,n-1)$
consists of all traceless matrices of the form
$$
\begin{pmatrix} A & B \\ B^{t} & D \end{pmatrix},
$$ 
where the $2 \times 2$-matrix $A$ and the $(n-1) \times (n-1)$-matrix $D$
are skew-symmetric. For $n \geq 4$ we will use the inclusion 
\begin{equation*}
i: \mathfrak{o}(2,n-1) \rightarrow \mathfrak{o}(2,n)
\end{equation*}
given by
\begin{equation}\label{inkl}
M \mapsto \begin{pmatrix} M & 0 \\ 0 & 0 \end{pmatrix}.
\end{equation}
On the Lie group level we use the corresponding inclusion
$$
i: O(2,n-1) \rightarrow O(2,n)
$$
given by
\begin{equation}\label{inkl2}
M \mapsto \begin{pmatrix} M & 0 \\ 0 & 1 \end{pmatrix}.
\end{equation}

The involution $\theta(A) = -A^{t}$ on $\mathfrak{o}(2,n-1)$ yields a
decomposition $\mathfrak{g} = \mathfrak{k} \oplus \FRAKB$, where
$\mathfrak{k}$ and $\FRAKB$ are the eigenspaces for the eigenvalue $1$
and $-1$, respectively. We choose a maximal abelian subspace of
$\FRAKB$ by
$$
\mathfrak{a}_{\FRAKB} = \text{span}_{\mathbb{R}}(H,H_2),
$$ 
where
$$
H = \begin{pmatrix} 0 & 0 & 0 & 1 & \textbf{0} \\
0 & 0 & 0 & 0 & \textbf{0} \\
0 & 0 & 0 & 0 & \textbf{0} \\
1 & 0 & 0 & 0 & \textbf{0} \\
\textbf{0} & \textbf{0} & \textbf{0} & \textbf{0} & \textbf{0} 
\end{pmatrix}, \,\,H_2 = \begin{pmatrix} 0 & 0 & 0 & 0 & \textbf{0} \\
0 & 0 & 1 & 0 & \textbf{0} \\
0 & 1 & 0 & 0 & \textbf{0} \\
0 & 0 & 0 & 0 & \textbf{0} \\
\textbf{0} & \textbf{0} & \textbf{0} & \textbf{0} & \textbf{0}
\end{pmatrix}.
$$ 
Here and henceforth, we write elements in $\mathfrak{o}(2,n-1)$ as matrices of the block forms
\begin{equation*}
\begin{pmatrix}
4 \times 4 & 4 \times (n-3) \\
(n-3) \times 4 & (n-3) \times (n-3)
\end{pmatrix}.
\end{equation*}

We define $f_i \in (\mathfrak{a}_\FRAKB)^{*}$ by $f_i(H_j) =
\delta_{ij}$. We choose $\Sigma^+ = \{f_1, f_2, f_1+f_2, f_1-f_2 \}$
as the set of positive restricted roots. The simple restricted roots
are $f_1, f_2$ and $f_1-f_2$. The set of restricted roots is $\Sigma =
\Sigma^+ \cup - \Sigma^+$, and we have the root decomposition
$$ 
\mathfrak{g} = \mathfrak{g}_{0} \oplus \sum_{\alpha \in \Sigma}
\mathfrak{g}_{\alpha} \;, \;\mathfrak{g}_{0} = \mathfrak{a}_{\FRAKB}
\oplus \mathfrak{m}_{\FRAKB},
$$ 
where $\mathfrak{m}_{\FRAKB}$ is the centralizer of
$\mathfrak{a}_{\FRAKB}$ in $\mathfrak{k}$. We have
$$
\mathfrak{m}_{\FRAKB} = \text{span}_{\mathbb{R}}( M_{ij} \,\vert \, 1 \leq i, 
j \leq n-3 ),
$$ 
where for $1 \leq i,j \leq n-3$ we set
$$ 
M_{ij} = \begin{pmatrix} 0 & 0 & 0 & 0 & \textbf{0} \\
0 & 0 & 0 & 0 & \textbf{0} \\
0 & 0 & 0 & 0 & \textbf{0} \\
0 & 0 & 0 & 0 & \textbf{0} \\
\textbf{0} & \textbf{0} & \textbf{0} & \textbf{0} & R_{ij}
\end{pmatrix}
\,\, \text{with} \,\, (R_{ij})_{rs} = \delta_{ir}\delta_{js} -
\delta_{is}\delta_{jr}.
$$ 
We choose a basis of root vectors as follows. For $1 \leq j \leq n-3$, $e_j \in
\mathbb{R}^{n-3}$, let
$$
Y^{+}_{j} = \begin{pmatrix}
0 & 0 & 0 & 0 & e_j \\
0 & 0 & 0 & 0 & \textbf{0} \\
0 & 0 & 0 & 0 & \textbf{0} \\
0 & 0 & 0 & 0 & e_j \\
e_j^t & \textbf{0} & \textbf{0} & -e_j^{t} 
&  \textbf{0} \\
\end{pmatrix}  \in  \mathfrak{g}_{f_1},\,\, Z^{+}_{j} = \begin{pmatrix}
0 & 0 & 0 & 0 & \textbf{0} \\
0 & 0 & 0 & 0 & e_j \\
0 & 0 & 0 & 0 & e_j \\
0 & 0 & 0 & 0 & \textbf{0} \\
\textbf{0} & e_j^t & -e_j^{t} & \textbf{0} 
& \textbf{0} \\
\end{pmatrix} \in \mathfrak{g}_{f_2},
$$
and  let $Y_{j}^- = (Y_j^+)^{t} \in \mathfrak{g}_{-f_1}$, $Z_{j}^{-} =
(Z_j^+)^{t} \in \mathfrak{g}_{-f_2}$. We define $W_1$ and $W_2$ by
$$
W_1 =  \begin{pmatrix}
0 & 1 & -1 & 0 & \textbf{0} \\
-1 & 0 & 0 & 1 & \textbf{0} \\
-1 & 0 & 0 & 1 & \textbf{0} \\
0 & 1 & -1 & 0 & \textbf{0} \\
\textbf{0} & \textbf{0} & \textbf{0} 
& \textbf{0}  &  \textbf{0} \\
\end{pmatrix} \in \mathfrak{g}_{f_1 + f_2}, W_2 = \begin{pmatrix}
0 & 1 & 1 & 0 & \textbf{0} \\
-1 & 0 & 0 & 1 & \textbf{0} \\
1 & 0 & 0 & -1 & \textbf{0} \\
0 & 1 & 1 & 0 & \textbf{0} \\
\textbf{0} & \textbf{0} & \textbf{0} 
& \textbf{0} & \textbf{0} \\
\end{pmatrix} \in \mathfrak{g}_{f_1 - f_2}.
$$
We have
$[Y_j^{+}, Z_{k}^{+}] = \delta_{jk} W_1$
and $[Y_j^{+}, Z_{k}^{-}] = \delta_{jk} W_2$. Moreover, we set
\begin{equation*}
\begin{split}
[Y_j^{-}, Z_{k}^{+}] &= \delta_{jk} W_3 = \delta_{jk} \theta(W_2) \in
\mathfrak{g}_{-f_1 + f_2}, \\
[Y_j^{-}, Z_{k}^{-}] &= \delta_{jk} W_4 
= \delta_{jk} \theta(W_1) \in \mathfrak{g}_{-f_1 - f_2}.
\end{split}
\end{equation*}
It will be convenient to introduce the following elements.
\begin{align}
Q_1^{+} &= \frac{W_1+W_2}{2}, & Q_1^{-} & = (Q_1^{+})^{t} = \frac{-W_3-W_4}{2}, \\
Q_2^{+} &= \frac{W_2-W_1}{2}, & Q_2^{-} & = (Q_2^{+})^{t} = \frac{W_4-W_3}{2}. 
\end{align}
We define $\mathfrak{n}_{\FRAKB}^{+} = \sum_{\lambda \in \Sigma^{+} }
\mathfrak{g}_{\lambda} $. A minimal parabolic subalgebra is given by $
\mathfrak{p}_{\text{min}} = \mathfrak{m}_{\FRAKB} \oplus
\mathfrak{a}_{\FRAKB} \oplus \mathfrak{n}_{\FRAKB}^{+}$, a maximal
parabolic subalgebra is given by
$\mathfrak{p}=\mathfrak{p}_{\text{min}} \oplus \mathfrak{g}_{-f_2}$,
with Langlands decomposition $ \mathfrak{p} = \mathfrak{m} \oplus
\mathfrak{a} \oplus \mathfrak{n}^{+}$, where
$$ 
\mathfrak{m} = \mathfrak{m}_{\FRAKB} \oplus \mathfrak{g}_{f_2}
\oplus \mathfrak{g}_{-f_2} \oplus \text{span}_{\mathbb{R}}(H_2),\;\;
\mathfrak{a} = \text{span}_{\mathbb{R}}( H ), \;\; \mathfrak{n}^{+}
= \mathfrak{g}_{f_1} \oplus \mathfrak{g}_{f_1+f_2} \oplus
\mathfrak{g}_{f_1-f_2}.
$$
We have $\mathfrak{n}^{-} = \theta(\mathfrak{n}^+) =
\mathfrak{g}_{-f_1} \oplus \mathfrak{g}_{-f_1-f_2} \oplus
\mathfrak{g}_{-f_1+f_2}$. We have the decomposition $ \mathfrak{g} =
\mathfrak{n}^- \oplus \mathfrak{m} \oplus \mathfrak{a} \oplus
\mathfrak{n}^{+}$. The commutator relations for $\mathfrak{o}(2,n-1)$
can be found in the appendix. We have
\begin{equation}\label{bracket_rel}
\begin{split}
[ \mathfrak{m}, \mathfrak{n}^+ ] \subset \mathfrak{n}^+, [
\mathfrak{m}, \mathfrak{n}^- ] \subset \mathfrak{n}^-, [ \mathfrak{m},
\mathfrak{m} ] \subset \mathfrak{m}, [\mathfrak{m}, \mathfrak{a}] = 0,
[ \mathfrak{n}^-, \mathfrak{n}^- ] = 0.
\end{split}
\end{equation}
We will also use the following basis for $\mathfrak{n}^{-}_n$:
\begin{equation}
I_j = \begin{cases} Y_{j}^{-} & \text{for} \; 1 \leq j \leq n-3, \\ 
Q_1^- & \text{for} \; j = n-2, \\ Q_2^- & \text{for} \; j = n-1. 
\end{cases}
\label{NBASIS}
\end{equation}

Following \cite[p. 133]{knapp}, we define $K_0$, $A$, $N^+$ and $M_0$
as the analytic subgroups of $G^n = O(2,n-1)$ with Lie algebras
$\mathfrak{k}$, $\mathfrak{a}$, $\mathfrak{n}^{+}$ and $\mathfrak{m}$,
respectively. We let $K$ be the maximal compact subgroup $O(2) \times
O(n-1)$. We will also use the notation $K_0^n$, $K^n$, $(N^+)^n$ and
$M_0^n$ for these subgroups of $G^n$. The Lie algebras
$\mathfrak{n}^{+}$ and $\mathfrak{a}$ are abelian, and hence
$N^+=\exp(\mathfrak{n}^+)$, $A=\exp(\mathfrak{a})$.

We have a bijection $\mathfrak{so}(1,n-2) \simeq \mathfrak{m}$ which
shows that $M_0 \simeq SO_{0}(1,n-2)$. In \cite{nishikawa} it is
proved that, for $0 \leq p \leq q$, we have $\exp(\mathfrak{so}(p,q))
= SO_0(p,q)$ if and only if $p=0$ or $p=1$. This shows that
$\exp(\mathfrak{m}) = M_0$.

We have
$$
Z_{K}(\mathfrak{a}) = \{ k \in K \; \vert \; \Ad(k)=1 \; \text{on} \,\; 
\mathfrak{a}\, \} = \{ k \in O(2) \times O(n-1) \,\, \vert \,\, kH = Hk \}.
$$
Every $m \in Z_K(\mathfrak{a})$ satisfies
$$
m = \begin{pmatrix}
a & 0 & 0 & 0 & 0 & \dots & 0 \\
0 & b & 0 & 0 & 0 & \dots & 0 \\
0 & 0 & m_{11} & 0 & m_{13} & \dots & m_{1 (n-1)} \\
0 & 0 & 0 & a & 0 & \dots & 0 \\
0 & 0 & m_{31} & 0 & m_{33} & \dots & m_{3 (n-1)} \\
\vdots & \vdots & \vdots & \vdots & \vdots & & \vdots \\
0 & 0 & m_{(n-1) 1} & 0 & m_{(n-1) 3 } & \dots & m_{(n-1) (n-1) }
\end{pmatrix} \in O(2) \times O(n-1),
$$
where $a = \pm 1, b= \pm 1$. We know that
$\exp(Z_{\mathfrak{k}}(\mathfrak{a}))$ consists of matrices of the form
$$
\begin{pmatrix}
1 & 0 & 0 & 0 & 0 & \dots & 0 \\
0 & 1 & 0 & 0 & 0 & \dots & 0 \\
0 & 0 & m_{11} & 0 & m_{13} & \dots & m_{1 (n-1)} \\
0 & 0 & 0 & 1 & 0 & \dots & 0 \\
0 & 0 & m_{31} & 0 & m_{33} & \dots & m_{3 (n-1)} \\
\vdots & \vdots & \vdots & \vdots & \vdots & & \vdots \\
0 & 0 & m_{(n-1) 1} & 0 & m_{(n-1) 3 } & \dots & m_{(n-1) (n-1) }
\end{pmatrix} \in SO(2) \times SO(n-1),
$$ 
and we conclude that 
$$
Z_K(\mathfrak{a}) = \{ 1_{(n)}, J_{(n)} \} \langle w_1,
w_2 \rangle \exp(Z_{\mathfrak{k}}(\mathfrak{a})),
$$ 
where
\begin{equation}
\begin{split}
\label{w1234}
w_1 = \begin{pmatrix} 1 & 0 & 0 & 0 & \textbf{0} \\
0 & -1 & 0 & 0 & \textbf{0} \\
0 & 0 & 1 & 0 & \textbf{0} \\
0 & 0 & 0 & 1 & \textbf{0} \\
\textbf{0} & \textbf{0} &  \textbf{0} & \textbf{0} & \textbf{1} 
\end{pmatrix}, \,\,w_2 = \begin{pmatrix} 1 & 0 & 0 & 0 & \textbf{0} \\
0 & 1 & 0 & 0 & \textbf{0} \\
0 & 0 & -1 & 0 & \textbf{0} \\
0 & 0 & 0 & 1 & \textbf{0} \\
\textbf{0} & \textbf{0} &  \textbf{0} & \textbf{0} & \textbf{1} 
\end{pmatrix},
\end{split}
\end{equation}
\begin{equation}
J_{(n)} = \begin{pmatrix} -1 & 0 & 0 & 0 & \textbf{0} \\
0 & -1 & 0 & 0 & \textbf{0} \\
0 & 0 & -1 & 0 & \textbf{0} \\
0 & 0 & 0 & -1 & \textbf{0} \\
\textbf{0} & \textbf{0} &  \textbf{0} & \textbf{0} & -\textbf{1} 
\end{pmatrix},
\end{equation}
and $\langle w_1, w_2 \rangle$ is the subgroup of $O(2,n-1)$ generated
by $w_1$ and $w_2$. Here $1_{(n)}$ is the identity element in $G^n$.

We observe that $Z_{\mathfrak{k}}(\mathfrak{a}) \subset \mathfrak{m}$
and $\exp( Z_{\mathfrak{k}}(\mathfrak{a}) ) \subset M_0$. By
definition, $M=Z_{K}(\mathfrak{a}) M_0$ and hence
\begin{multline}
M=Z_K(\mathfrak{a}) M_0 = \bigg( \{ 1_{(n)}, J_{(n)} \} \langle w_1,
w_2 \rangle \exp(Z_{\mathfrak{k}}(\mathfrak{a})) \bigg) M_0 \\ = \{
1_{(n)}, J_{(n)} \} \langle w_1, w_2 \rangle M_0 \subset G^n =
O(2,n-1). \label{MEGENSKAP}
\end{multline}
We have $M_0 \simeq SO_{0}(1,n-2)$ and hence $\langle w_1, w_2 \rangle
M_0 \simeq O(1,n-2)$. The maximal parabolic subgroup is given by
$P=MAN^+$.

Furthermore, we see that $M_0 \cap K \simeq SO(n-2)$, the elements of
$M_0 \cap K$ are given by
$$
\begin{pmatrix} 1 & 0 & 0      & 0   & 0      & \dots & 0 \\ 
                0 & 1 & 0    & 0   & 0    & \dots & 0 \\
                0 & 0 & d_{1 1} & 0 & d_{1 2} & \dots & d_{1 (n-2)} \\
                0 & 0   & 0       & 1 & 0       & \dots & 0           \\
                0 & 0 & d_{2 1} & 0 & d_{2 2} & \dots & d_{2 (n-2)} \\
                \vdots & \vdots & \vdots & \vdots &\vdots &  & \vdots \\
                0 & 0 & d_{(n-2) 1} & 0 & d_{(n-2) 2} & \dots & d_{(n-2) (n-2)}
\end{pmatrix} \in SO(2,n-1),
$$
and the elements of $\langle w_1, w_2 \rangle M_0 \cap K \simeq \{ \pm 1 \}
\times O(n-2)$ are given by
$$
 \begin{pmatrix}1 & 0 & 0      & 0   & 0      & \dots & 0 \\ 
                0 & \pm 1 & 0    & 0   & 0    & \dots & 0 \\
                0 & 0 & d_{1 1} & 0 & d_{1 2} & \dots & d_{1 (n-2)} \\
                0 & 0   & 0       & 1 & 0       & \dots & 0           \\
                0 & 0 & d_{2 1} & 0& d_{2 2} & \dots & d_{2 (n-2)} \\
                \vdots & \vdots & \vdots & \vdots &\vdots &  & \vdots \\
                0 & 0 &d_{(n-2) 1} &0& d_{(n-2) 2} & \dots & d_{(n-2) (n-2)}
\end{pmatrix} \in O(2,n-1).
$$

\begin{lemma} $K/(\langle w_1, w_2 \rangle M_0 \cap K) = S^1 \times S^{n-2}$, 
and $$K/(M \cap K) \simeq (S^1 \times S^{n-2})/ \mathbb{Z}_2,$$where
the $\mathbb{Z}_2$-action is defined by
\begin{equation}
(x,y) \mapsto (-x,-y) \,\, \text{on} \,\, S^1 \times S^{n-2}.
\label{involution}
\end{equation}
\end{lemma}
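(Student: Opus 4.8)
The plan is to realize both homogeneous spaces through the standard linear action of $K=O(2)\times O(n-1)$ on $\mathbb{R}^{n+1}$, where the first factor $O(2)$ acts on the $\mathbb{R}^2$ spanned by the coordinates $t_1,t_2$ and the second factor $O(n-1)$ on the $\mathbb{R}^{n-1}$ spanned by $x_1,\dots,x_{n-1}$. With this identification $S^1$ is the unit circle in the $(t_1,t_2)$-plane and $S^{n-2}$ the unit sphere in the $x$-space, and $K$ acts on the product $S^1\times S^{n-2}$; since $O(2)$ is transitive on $S^1$ and $O(n-1)$ is transitive on $S^{n-2}$, the $K$-action on $S^1\times S^{n-2}$ is transitive. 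I would then compute the stabilizer of the base point $p_0=\big((1,0),\,(0,1,0,\dots,0)\big)$: the stabilizer of $(1,0)$ in $O(2)$ consists exactly of the matrices $\mathrm{diag}(1,\pm1)$, a copy of $\mathbb{Z}_2$, while the stabilizer of $(0,1,0,\dots,0)$ in $O(n-1)$ is the copy of $O(n-2)$ acting on the orthogonal complement $\mathrm{span}(x_1,x_3,\dots,x_{n-1})$. Comparing this with the explicit matrix description of $\langle w_1,w_2\rangle M_0\cap K\simeq\{\pm1\}\times O(n-2)$ displayed above, one sees immediately that $\mathrm{Stab}_K(p_0)=\langle w_1,w_2\rangle M_0\cap K$. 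The orbit–stabilizer theorem then yields $K/(\langle w_1,w_2\rangle M_0\cap K)\cong S^1\times S^{n-2}$, which is the first assertion.

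For the second assertion I would invoke \eqref{MEGENSKAP}, which gives $M=\{1_{(n)},J_{(n)}\}\langle w_1,w_2\rangle M_0$ with $J_{(n)}=-1_{(n+1)}$. Since $-1_{(n+1)}=(-I_2,-I_{n-1})\in O(2)\times O(n-1)=K$, intersecting with $K$ gives $M\cap K=\langle -1_{(n+1)}\rangle\big(\langle w_1,w_2\rangle M_0\cap K\big)$; moreover $-1_{(n+1)}\notin\langle w_1,w_2\rangle M_0\cap K$, because the latter fixes the coordinate $t_1$ (its first row is $(1,0,\dots,0)$), so $M\cap K$ contains $\langle w_1,w_2\rangle M_0\cap K$ as an index-two subgroup. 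As $-1_{(n+1)}$ is central in $K$, left translation by it descends to a well-defined involution $\iota$ of $K/(\langle w_1,w_2\rangle M_0\cap K)$, and $K/(M\cap K)$ is precisely the quotient of $K/(\langle w_1,w_2\rangle M_0\cap K)$ by $\langle\iota\rangle$. Transporting $\iota$ through the identification of the first paragraph, it becomes the map induced on $S^1\times S^{n-2}$ by $(-I_2)\oplus(-I_{n-1})$, i.e. the antipodal map $(x,y)\mapsto(-x,-y)$ appearing in \eqref{involution}. Hence $K/(M\cap K)\cong(S^1\times S^{n-2})/\mathbb{Z}_2$, as claimed.

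I do not expect a genuine obstacle here; the only points needing care are the bookkeeping that matches the abstract point stabilizers with the explicit matrix forms of $\langle w_1,w_2\rangle M_0\cap K$ and of $M\cap K$, and the verification that quotienting by $-1_{(n+1)}$ really produces the antipodal identification rather than a trivial one — this last is automatic, since the antipodal map already acts freely on the $S^1$ factor and hence on the product.
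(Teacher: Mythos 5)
Your proof is correct and follows essentially the same strategy as the paper's: identify $\langle w_1,w_2\rangle M_0\cap K$ with $O(1)\times O(n-2)$ inside $K=O(2)\times O(n-1)$ and read off the quotient, then use $M\cap K=\{1_{(n)},J_{(n)}\}(\langle w_1,w_2\rangle M_0\cap K)$ for the second isomorphism. The paper's proof is terser---it simply cites $(O(2)\times O(n-1))/(O(1)\times O(n-2))\simeq S^1\times S^{n-2}$ and states the $\mathbb{Z}_2$ quotient---whereas you spell out the orbit--stabilizer argument and explicitly check that translation by $J_{(n)}=-1_{(n+1)}$ descends to the antipodal map, which the paper leaves implicit.
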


\begin{proof} We have $\langle w_1, w_2 \rangle M_0 \cap K \simeq O(1) 
\times O(n-2)$ and hence 
\begin{equation}
K/(\langle w_1, w_2 \rangle M_0 \cap K)
\simeq (O(2) \times O(n-1)) / (O(1) \times O(n-2)) \simeq S^1 \times S^{n-2}.
\label{KFORMEL}
\end{equation}
Using $M \cap K = \{ 1_{(n)}, J_{(n)} \} (\langle w_1, w_2 \rangle M_0
\cap K)$, we conclude that 
\begin{equation}
K/(M \cap K) \simeq (S^1 \times S^{n-2}) /
\mathbb{Z}_2.
\label{KFORMEL2}
\end{equation}
\end{proof}

Let $C_n$ be given by \eqref{lightcone}. We define $\text{Ein}_{n-1}$, the Einstein universe, as the set of
all straight lines in $C_n$ through the origin; see
\cite{frances_conformal} and \cite{falk}. We define $\widehat{\text{Ein}}_{n-1}$ as
the set of all rays in $C_n$ from the origin. The sphere with radius
$\sqrt{2}$ in $\mathbb{R}^{n+1}$ with center at the origin intersects
$C_n$ in the set $S^1 \times S^{n-2}$, and hence
$\widehat{\text{Ein}}_{n-1} \simeq S^1 \times S^{n-2}$ and
$\text{Ein}_{n-1} \simeq (S^1 \times S^{n-2})/\mathbb{Z}_2$, where the
where the $\mathbb{Z}_2$-action is coming from the involution
\eqref{involution}.
\begin{lemma}
$G^n=O(2,n-1)$ acts transitively on $\widehat{\text{Ein}}_{n-1}$, and
  $$\hat{P} = \langle w_1, w_2
  \rangle M_0 AN^+$$ is the isotropy group of the ray generated by
  $(1,0,0,1,0,\dots,0) \in \mathbb{R}^{n+1}$. It follows that $$
  \widehat{\mathrm{Ein}}_{n-1} \simeq G^n / \PHAT^n.$$
\end{lemma}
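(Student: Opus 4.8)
The plan is to verify the two assertions separately: first that $G^n$ acts transitively on $\widehat{\mathrm{Ein}}_{n-1}$, and second that the isotropy group of the distinguished ray is exactly $\hat{P} = \langle w_1, w_2\rangle M_0 A N^+$. For transitivity, I would use the identification $\widehat{\mathrm{Ein}}_{n-1} \simeq S^1 \times S^{n-2}$ coming from intersecting $C_n$ with the sphere of radius $\sqrt 2$, together with the previous lemma which gives $K/(\langle w_1, w_2\rangle M_0 \cap K) \simeq S^1 \times S^{n-2}$. Since $K \subset G^n$, the $K$-action on $\widehat{\mathrm{Ein}}_{n-1}$ is already transitive; indeed an explicit check shows that $O(2) \times O(n-1)$ acts transitively on the rays in $C_n$ (the $O(2)$-factor rotates the $(t_1,t_2)$-part, the $O(n-1)$-factor rotates the $(x_1,\dots,x_{n-1})$-part, and the norm constraint $t_1^2+t_2^2 = x_1^2+\cdots+x_{n-1}^2$ is preserved). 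Hence $G^n$ acts transitively, and $\widehat{\mathrm{Ein}}_{n-1} \simeq G^n/\hat P^n$ once $\hat P^n$ is identified as the stabilizer.

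For the isotropy computation, write $v_0 = (1,0,0,1,0,\dots,0)$ and let $\ell_0 = \mathbb{R}_{>0} v_0$ be the ray. I would first check the easy inclusion $\langle w_1, w_2\rangle M_0 A N^+ \subset G^n_{\ell_0}$ by direct matrix computation: $A = \exp(\mathbb{R}H)$ scales $v_0$ by $e^t > 0$ (from the block form of $H$, $Hv_0 = v_0$), so $A$ fixes the ray; $N^+ = \exp(\mathfrak{n}^+)$ with $\mathfrak{n}^+ = \mathfrak g_{f_1} \oplus \mathfrak g_{f_1+f_2} \oplus \mathfrak g_{f_1-f_2}$ — one checks from the explicit root vectors $Y_j^+, W_1, W_2$ that each kills $v_0$, so $N^+$ fixes $v_0$ itself; the matrices $w_1, w_2$ visibly fix $v_0$; and $M_0 \simeq SO_0(1,n-2)$ acts only on the coordinates $t_2, x_2, \dots, x_{n-1}$ (the $(n-3)\times(n-3)$ block plus the $H_2$-direction), all of which vanish on $v_0$, so $M_0$ fixes $v_0$. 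This gives $\hat P \subseteq G^n_{\ell_0}$.

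For the reverse inclusion I would argue by a dimension/counting comparison rather than brute force. Transitivity gives a bijection $G^n/G^n_{\ell_0} \simeq \widehat{\mathrm{Ein}}_{n-1}$, which has dimension $n-1$; hence $\dim G^n_{\ell_0} = \dim G^n - (n-1)$. On the other hand, from the Langlands-type decomposition $\mathfrak g = \mathfrak n^- \oplus \mathfrak m \oplus \mathfrak a \oplus \mathfrak n^+$ one computes $\dim \hat{\mathfrak p} = \dim(\mathfrak m \oplus \mathfrak a \oplus \mathfrak n^+) = \dim \mathfrak g - \dim \mathfrak n^-$, and $\dim \mathfrak n^- = (n-3) + 1 + 1 = n-1$ from the basis \eqref{NBASIS}. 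So $\hat P$ and $G^n_{\ell_0}$ have the same dimension, and since $\hat P$ is a closed subgroup contained in $G^n_{\ell_0}$ it suffices to show they have the same number of connected components, equivalently that $G^n_{\ell_0} \subseteq \hat P K$ is no larger than $\hat P$ on the level of $\pi_0$. Concretely I would intersect with $K$: by the $K$-transitivity already established, $G^n_{\ell_0} = (G^n_{\ell_0}\cap K)\cdot(G^n_{\ell_0}\cap AN^+)$ via the Iwasawa-type decomposition, and $G^n_{\ell_0}\cap K$ is the stabilizer in $O(2)\times O(n-1)$ of the point of $S^1 \times S^{n-2}$ corresponding to $v_0$, which by the previous lemma's computation \eqref{KFORMEL} equals $\langle w_1, w_2\rangle M_0 \cap K$. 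Combined with $AN^+ \subset \hat P$ and the connectedness of $A$ and $N^+$, this pins down $G^n_{\ell_0} = \hat P$.

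The main obstacle I anticipate is the bookkeeping in the reverse inclusion — specifically making the Iwasawa-style factorization of the stabilizer precise (that any element of $G^n_{\ell_0}$ can be written as $k a n$ with each factor again stabilizing the ray, using that $K$ acts transitively and $a, n$ always preserve the ray direction) and correctly matching $G^n_{\ell_0}\cap K$ with $\langle w_1, w_2\rangle M_0 \cap K$ at the level of connected components, so that the component groups of $\hat P$ and $G^n_{\ell_0}$ agree. The dimension count is routine once the root-space data above is in hand; the delicate point is ruling out that $G^n_{\ell_0}$ has strictly more components than $\hat P$, which is exactly where the explicit description of $Z_K(\mathfrak a)$ and $M\cap K$ from the preceding lemma does the work.
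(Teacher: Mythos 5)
Your overall route is essentially the paper's: show the easy inclusion $\hat P \subseteq G^n_{\ell_0}$ by inspection, and get the reverse inclusion by writing an element of the stabilizer via the Iwasawa decomposition, observing each non-compact factor already lies in $\hat P$, pinning down the $K$-part with the computation of $Z_K(\mathfrak a)$ and $M\cap K$ from the preceding lemma, and finally handling components by $O(2,n-1)=\langle w_1,w_2\rangle\, SO_0(2,n-1)$. Two small corrections. First, the decomposition you invoke should be the honest Iwasawa decomposition of the identity component, $g=k\,a\,n$ with $a\in\exp(\mathfrak a_{\mathfrak q})$ and $n\in\exp(\mathfrak n^+_{\mathfrak q})$ (the \emph{minimal} parabolic's pieces), not a factorization through the maximal parabolic's $AN^+$: there is no clean $K\cdot AN^+$ decomposition since $A$ here is one-dimensional, and $\mathfrak n^+_{\mathfrak q}$ strictly contains $\mathfrak n^+$. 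The argument survives because $\exp(\mathfrak a_{\mathfrak q})\exp(\mathfrak n^+_{\mathfrak q})\subset M_0AN^+\subset\hat P$ and both factors visibly fix the ray, so that again $k\in K_0\cap G^n_{\ell_0}\subset M_0$. Second, once this Iwasawa step is done correctly, you already have $g\in\hat P$ directly, so the dimension count and the subsequent component-group comparison, while correct, are redundant: they are not needed to close the argument, and the paper omits them. Your explicit verification of $K$-transitivity is a reasonable addition (the paper takes transitivity of $G^n$ on rays for granted), and your matrix checks for the easy inclusion are a fine way to make ``It is immediate'' precise.
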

\begin{proof} Let $\tilde{P}^n$ denote the isotropy group of the ray 
generated by $$(1,0,0,1,0,\dots,0) \in \mathbb{R}^{n+1}.$$It is
immediate that $\langle w_1, w_2 \rangle M_0 AN^+ \subset
\tilde{P}^n$. We now prove $\tilde{P}^n \subset \hat{P}^n$. Let $g \in
\tilde{P}^n$. First, assume that $g \in \tilde{P}^n \cap
SO_{0}(2,n-1)$ and write $g=kan \in K_0
\exp(\mathfrak{a}_{\mathfrak{q}})
\exp(\mathfrak{n}^+_{\mathfrak{q}})$. We have
$\exp(\mathfrak{a}_{\mathfrak{q}}) \subset \tilde{P}^n$,
$\exp(\mathfrak{n}^+_{\mathfrak{q}}) \subset \tilde{P}^n$ and hence $k
\in \tilde{P}^n$. It is easy to see that $K_0 \cap \tilde{P}^n \subset
M_0$ and hence we conclude $k \in M_0$. Because
$\exp(\mathfrak{n}_{\mathfrak{q}}^+) \subset N^+$ and
$\exp(\mathfrak{a}_{\mathfrak{q}}) \subset M_0 A$ we conclude that $g
\in M_0 AN^+$. Finally, using $O(2,n-1)=\langle w_1, w_2 \rangle
SO_{0}(2,n-1)$ we conclude that $\tilde{P}^n \subset \langle w_1, w_2
\rangle M_0 A N^+$.
\end{proof}

\begin{lemma}
$G^n=O(2,n-1)$ acts
transitively on $\EIN_{n-1}$, and $P^n$ is the isotropy group of the
line generated by $(1,0,0,1,0,\dots,0) \in \mathbb{R}^{n+1}$. It follows
that $$\mathrm{Ein}_{n-1} \simeq G^n/P^n.$$

\end{lemma}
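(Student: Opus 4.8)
The plan is to reduce everything to the preceding lemma about rays. Write $v=(1,0,0,1,0,\dots,0)\in\mathbb{R}^{n+1}$, so that the line $\ell=\mathbb{R}v$ is the union of the ray $\mathbb{R}_{\ge 0}v$ and the opposite ray $\mathbb{R}_{\ge 0}(-v)$; since $C_n$ is a cone we have $\ell\subset C_n$, so $\ell$ really represents a point of $\EIN_{n-1}$. First I would prove transitivity: the assignment sending a ray in $C_n$ to the line through the origin containing it is a surjective, $G^n$-equivariant map $\widehat{\EIN}_{n-1}\to\EIN_{n-1}$ (equivariance because $G^n$ acts linearly), so transitivity of $G^n$ on $\widehat{\EIN}_{n-1}$, already established, forces transitivity on $\EIN_{n-1}$.

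Next I would identify the isotropy group of $\ell$. Since every $g\in G^n$ is an invertible linear map preserving $C_n$, one has $g\ell=\ell$ if and only if $gv=rv$ for some nonzero real $r$, and the sign of $r$ partitions the stabilizer of $\ell$ into the elements fixing the ray $\mathbb{R}_{\ge 0}v$ and the elements carrying it onto the opposite ray. By the previous lemma the first set is exactly $\hat{P}^n=\langle w_1,w_2\rangle M_0AN^+$. For the second, observe that the matrix $J_{(n)}$ equals $-\mathrm{Id}$ on $\mathbb{R}^{n+1}$, hence is central in $G^n$ and sends $\mathbb{R}_{\ge 0}v$ onto $\mathbb{R}_{\ge 0}(-v)$; consequently $g$ carries $\mathbb{R}_{\ge 0}v$ onto the opposite ray if and only if $J_{(n)}g$ fixes $\mathbb{R}_{\ge 0}v$, i.e. if and only if $g\in J_{(n)}\hat{P}^n$. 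Since $J_{(n)}v=-v$ is not a positive multiple of $v$, the cosets $\hat{P}^n$ and $J_{(n)}\hat{P}^n$ are disjoint, and $\{1_{(n)},J_{(n)}\}$ is a subgroup (as $J_{(n)}^2=1_{(n)}$) that normalizes $\hat{P}^n$ because $J_{(n)}$ is central. Therefore the stabilizer of $\ell$ equals $\{1_{(n)},J_{(n)}\}\hat{P}^n=\{1_{(n)},J_{(n)}\}\langle w_1,w_2\rangle M_0AN^+$, which by \eqref{MEGENSKAP} is precisely $MAN^+=P^n$.

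With transitivity and the computation of the stabilizer in hand, the orbit--stabilizer correspondence yields $\EIN_{n-1}\simeq G^n/P^n$, completing the proof. I do not expect a genuine obstacle here, since the hard work---the $KAN$-decomposition argument identifying the ray stabilizer---has already been carried out in the previous lemma; the only points needing care are the clean reduction of the line stabilizer to the ray stabilizer via the central element $J_{(n)}$, and the bookkeeping identity $M=\{1_{(n)},J_{(n)}\}\langle w_1,w_2\rangle M_0$ from \eqref{MEGENSKAP}, which is exactly what makes $P^n=MAN^+$ come out on the nose. The remaining checks---equivariance and surjectivity of the ray-to-line map, and disjointness of the two cosets---are routine.
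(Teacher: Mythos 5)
Your proof is correct, and it takes a genuinely different and more economical route than the paper's. The paper essentially repeats, almost verbatim, the $K_0\exp(\mathfrak{a}_{\mathfrak{q}})\exp(\mathfrak{n}^+_{\mathfrak{q}})$-decomposition argument from the preceding ray-stabilizer lemma: it introduces the abstract stabilizer $\tilde{P}^n$, writes $g\in\tilde{P}^n\cap SO_0(2,n-1)$ as $kan$, argues $K_0\cap\tilde{P}^n\subset M$, and then invokes $O(2,n-1)=\langle w_1,w_2\rangle SO_0(2,n-1)$ with $\langle w_1,w_2\rangle\subset P^n$ to finish. You instead observe that the central element $J_{(n)}=-\mathrm{Id}$ interchanges the two rays of any line, so the line stabilizer is exactly the union of the two cosets $\hat{P}^n$ and $J_{(n)}\hat{P}^n$; this reduces the isotropy computation to the already-proven ray case plus the bookkeeping identity $M=\{1_{(n)},J_{(n)}\}\langle w_1,w_2\rangle M_0$ from \eqref{MEGENSKAP}. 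Your transitivity argument, via the surjective $G^n$-equivariant map $\widehat{\EIN}_{n-1}\to\EIN_{n-1}$, is likewise cleaner than an implicit redo. What the paper's approach buys is self-containment of this lemma; what yours buys is avoiding a duplicated Iwasawa-type argument and isolating the precise group-theoretic relation $P^n=\{1_{(n)},J_{(n)}\}\hat{P}^n$ that distinguishes lines from rays. All the steps you flag as routine (equivariance and surjectivity of the ray-to-line map, disjointness of the cosets, centrality of $J_{(n)}$) are indeed routine and correctly justified.
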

\begin{proof} 
Let $\tilde{P}^n$ denote the isotropy group of the line generated by
$$(1,0,0,1,0,\dots,0) \in \mathbb{R}^{n+1}.$$It is immediate that $P^n
\subset \tilde{P}^n$. We now prove $\tilde{P}^n \subset P^n$. Let $g
\in \tilde{P}^n$. First, assume that $g \in \tilde{P}^n \cap
SO_{0}(2,n-1)$, and write $g=kan \in K_0
\exp(\mathfrak{a}_{\mathfrak{q}})
\exp(\mathfrak{n}^+_{\mathfrak{q}})$. We have
$\exp(\mathfrak{a}_{\mathfrak{q}}) \subset \tilde{P}^n$,
$\exp(\mathfrak{n}^+_{\mathfrak{q}}) \subset \tilde{P}^n$ and hence $k
\in \tilde{P}^n$. It is easy to see that $K_0 \cap \tilde{P}^n \subset
M$ and hence we conclude $k \in M$. Because
$\exp(\mathfrak{n}_{\mathfrak{q}}^+) \subset N^+$ and
$\exp(\mathfrak{a}_{\mathfrak{q}}) \subset MA$ we conclude that $g \in
P^n$. Finally, using $O(2,n-1) = \langle w_1, w_2 \rangle
SO_{0}(2,n-1)$, $\langle w_1, w_2 \rangle \subset P^n$, we conclude
that $\tilde{P}^n \subset P^n$.
\end{proof}

We will use the notation
$$
 n^{-}_{(x_1, x_2, x_3, \dots, x_{n-1})} = \exp(x_1 Q_1^- + x_2 Q_2^- + \sum_{i=3}^{n-1} x_{i}
Y_{i-2}^- ),
$$
and $(N^-)^n$ is identified with $\mathbb{R}^{n-1} = \mathbb{M}^{n-1}$ by
\begin{equation}\label{IDENT}
\mathbb{M}^{n-1} \ni (x_1, x_2, x_3, \dots,
x_{n-1}) \mapsto n^{-}_{(x_1, x_2, x_3, \dots, x_{n-1})} \in (N^-)^n.
\end{equation}
We let $\pi: C_n \setminus \{ 0\} \rightarrow \EIN_{n-1}$ denote the
natural projection and define the map 
\begin{equation}
p: G^{n} \rightarrow
\EIN_{n-1}, \,x \mapsto \pi \left( x(1,0,0,1,0_{n-3}) \right).
\end{equation}
Here $(1,0,0,1,0_{n-3})$ denotes the vector $(1,0,0,1,0,\dots,0) \in
\mathbb{R}^{n+1}$, and $$x(1,0,0,1,0_{n-3})$$ denotes matrix
multiplication of $x \in (N^{-})^n \subset G^{n}$ with the column matrix
$(1,0,0,1,0_{n-3})^t$.
\begin{lemma}
\label{GE_L}
The injection $$j: \mathbb{M}^{n-1} \simeq (N^-)^n \rightarrow
\EIN_{n-1}, \,x \mapsto p(x)$$ has image $\EIN_{n-1}
\setminus \pi(D_n)$, where
$$
D_n = \{ (t_1, t_2, x_1, x_2, x_3, \dots, x_{n-1}) \, \vert \, t_1 + x_2 =
0\}.
$$

The injection $j$ gives a conformal compactification of
$\mathbb{M}^{n-1}$ in $\EIN_{n-1}$. 
\end{lemma}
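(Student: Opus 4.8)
The plan is to compute $j$ in explicit coordinates and then read off both assertions. Write a point of $\mathbb{R}^{n+1}$ as $(t_1,t_2,y_1,\dots,y_{n-1})$ (so $D_n=\{t_1+y_2=0\}$), reserving $x_1,\dots,x_{n-1}$ for the Minkowski coordinates on $\mathbb{M}^{n-1}\simeq(N^-)^n$ of \eqref{IDENT}. Put $v_0=(1,0,0,1,0_{n-3})^t\in C_n$, $v_0'=(1,0,0,-1,0_{n-3})^t$, and for $x=(x_1,\dots,x_{n-1})$ set
\[
X(x)=x_1Q_1^-+x_2Q_2^-+\sum_{i=3}^{n-1}x_i\,Y_{i-2}^-\in\mathfrak{n}^-_n,\qquad \Phi(x)=\exp(X(x))\,v_0,
\]
so that $j(x)=\pi(\Phi(x))$. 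The $\mathrm{ad}(H)$-grading of $\mathfrak{g}_n$ is inherited by the standard representation on $\mathbb{R}^{n+1}$: $H$ fixes $v_0$, sends $v_0'$ to $-v_0'$, and annihilates the complementary $(n-1)$-plane $V_0=\{(0,\ast,\ast,0,\ast,\dots,\ast)\}$, so $\mathbb{R}^{n+1}=\mathbb{R}v_0\oplus V_0\oplus\mathbb{R}v_0'$ are the $H$-eigenspaces with eigenvalues $+1,0,-1$. Since $X(x)\in\mathfrak{n}^-_n$ obeys $[H,X(x)]=-X(x)$, the operator $X(x)$ lowers $H$-weight by one and hence maps $\mathbb{R}v_0\to V_0\to\mathbb{R}v_0'\to 0$; in particular $X(x)^3v_0=0$ and
\[
\Phi(x)=v_0+X(x)v_0+\tfrac12X(x)^2v_0 .
\]
A direct computation with the root vectors of Section \ref{principal} gives $X(x)v_0=(0,2x_1,2x_2,0,2x_3,\dots,2x_{n-1})$ and $X(x)^2v_0=2\,q(x)\,v_0'$, with $q(x)=-x_1^2+x_2^2+\cdots+x_{n-1}^2$ the Minkowski form, whence
\[
\Phi(x)=\big(1+q(x),\,2x_1,\,2x_2,\,1-q(x),\,2x_3,\dots,2x_{n-1}\big)\in C_n .
\]
The one step needing genuine work is pinning down $X(x)v_0$ and especially $X(x)^2v_0=2q(x)v_0'$, which requires careful bookkeeping with the block forms and signs of $Q_1^-,Q_2^-,Y_j^-$; the truncation of the exponential is forced by the grading and costs nothing.

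From this formula the image is immediate. The vector $\Phi(x)$ has $t_1+y_2=(1+q(x))+(1-q(x))=2\ne0$, so $j(\mathbb{M}^{n-1})\subseteq\EIN_{n-1}\setminus\pi(D_n)$. Conversely, a line in $C_n$ not contained in $D_n$ has a unique representative $(t_1,t_2,y_1,\dots,y_{n-1})$ with $t_1+y_2=2$; comparing with the formula for $\Phi(x)$ forces $x_1=t_2/2$, $x_2=y_1/2$ and $x_i=y_i/2$ for $3\le i\le n-1$, and then $t_1=1+q(x)$, $y_2=1-q(x)$ follow automatically: on such a representative the defining relation of $C_n$ reads $t_1^2-y_2^2=-t_2^2+y_1^2+y_3^2+\cdots+y_{n-1}^2=4q(x)$, so $t_1-y_2=\tfrac12(t_1^2-y_2^2)=2q(x)$, which together with $t_1+y_2=2$ gives exactly these values. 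Hence $j$ is a bijection $\mathbb{M}^{n-1}\to\EIN_{n-1}\setminus\pi(D_n)$; its inverse (normalize to $t_1+y_2=2$, then read off the $x_i$) is manifestly smooth, so $j$ is a diffeomorphism onto $\EIN_{n-1}\setminus\pi(D_n)$.

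For the conformal claim, recall that $\EIN_{n-1}=\mathbb{P}(C_n)$ carries the conformal structure induced from the ambient form $g_0=-dt_1^2-dt_2^2+dy_1^2+\cdots+dy_{n-1}^2$: the restriction of $g_0$ to $C_n$ has the radial direction in its radical, so pulling $g_0$ back along any local smooth section of $\pi\colon C_n\setminus\{0\}\to\EIN_{n-1}$ represents the conformal class, and two sections, related by a nowhere-zero factor $\phi$, give metrics related by $\phi^2>0$; taking the section $C_n\cap\{t_1^2+t_2^2=1=y_1^2+\cdots+y_{n-1}^2\}=S^1\times S^{n-2}$ recovers $(-g_{S^1})\oplus g_{S^{n-2}}$ as in Section \ref{intro}. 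Since $j=\pi\circ\Phi$ with $\Phi$ a smooth section of $\pi$ over the image, it suffices to pull $g_0$ back along $\Phi$. With $dq=-2x_1\,dx_1+2x_2\,dx_2+\cdots+2x_{n-1}\,dx_{n-1}$, the first and fourth slots contribute $-d(1+q)^2+d(1-q)^2=0$, leaving
\[
\Phi^*g_0=4\big(-dx_1^2+dx_2^2+\cdots+dx_{n-1}^2\big)=4\,g_{\mathbb{M}^{n-1}} .
\]
Thus $j$ is a conformal (indeed homothetic) embedding.

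It remains to note that $\EIN_{n-1}\simeq(S^1\times S^{n-2})/\mathbb{Z}_2$ is compact, and that $\pi(D_n)$ is the image of the compact set $D_n\cap C_n\cap\{t_1^2+t_2^2+y_1^2+\cdots+y_{n-1}^2=2\}$ under the continuous projection, hence closed, while $D_n\cap C_n$ has dimension $\le n-1$ so $\pi(D_n)$ has dimension $\le n-2$ and empty interior. Therefore $j(\mathbb{M}^{n-1})=\EIN_{n-1}\setminus\pi(D_n)$ is open and dense in the compact conformal manifold $\EIN_{n-1}$, and $j$ realizes $\EIN_{n-1}$ as a conformal compactification of $\mathbb{M}^{n-1}$, as claimed. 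The main obstacle throughout is the explicit evaluation of $\Phi(x)$; once that is in hand, the image computation, the rational inverse, and the Weyl-factor cancellation in $\Phi^*g_0$ are all routine.
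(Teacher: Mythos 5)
Your core computation is the same as the paper's: you derive the explicit formula
\[
\Phi(x)=\big(1+q(x),\,2x_1,\,2x_2,\,1-q(x),\,2x_3,\dots,2x_{n-1}\big),\qquad
q(x)=-x_1^2+x_2^2+\cdots+x_{n-1}^2,
\]
for $\exp\!\big(x_1Q_1^-+x_2Q_2^-+\sum_{i\ge3}x_iY_{i-2}^-\big)v_0$ and then invert it on the slice $t_1+y_2=2$, which is exactly what the paper does (the paper writes the inverse directly in homogeneous form via $r=(t_1+x_2)/2$, $w_1=t_2/(t_1+x_2)$, etc.; your version normalizes first, then uses the quadric relation to recover the first and fourth slots). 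Two points where you add genuine value: (i) you derive the nilpotency $X(x)^3v_0=0$ cleanly from the $\mathrm{ad}(H)$-grading of the standard representation rather than just asserting the exponential sum; (ii) the paper's proof actually stops after identifying the image and never establishes the conformality claim in the lemma, whereas you prove it by computing $\Phi^*g_0=4\,g_{\mathbb{M}^{n-1}}$ (the $\pm dq^2$ contributions cancel) and then noting that the image is open and dense in the compact quotient $(S^1\times S^{n-2})/\mathbb{Z}_2$. So your proof is a strict superset of the paper's argument, filling in the part of the statement the paper leaves implicit.

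One small caution: there is a sign/indexing typo in the paper's displayed formula (the sum is written $\sum_{j=1}^{n-3}w_{j-2}Y_j^-$, which does not match \eqref{IDENT}); your indexing is the correct one, consistent with \eqref{IDENT}, so no issue there.
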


\begin{proof} We compute
\begin{equation*}
\begin{split}
& \exp( w_1 Q_1^- + w_2 Q_2^- + \sum_{j=1}^{n-3} w_{j-2} Y_{j}^{-})
(1,0,0,1,0_{n-3}) = \\ & = (1 - w_1^2 + w_2^2+ \vert w' \vert^2, 2w_1,
2w_2, 1 + w_1^2 - w_2^2 - \vert w' \vert^2, 2w_3, 2w_4, \ldots,
2w_{n-1}),
\end{split}
\end{equation*}
where we let $w'=(w_3, \dots, w_{n-1})$. The equations
\begin{equation*}
\begin{split}
t_1 = r(1-w_1^2+w_2^2+\vert w' \vert^2), \\
t_2 = 2rw_1, \\ 
x_1 = 2rw_2, \\
x_2 = r(1+w_1^2-w_2^2-\vert w' \vert^2), \\
x_3 = 2rw_3, \\
x_4 = 2rw_4, \\
\dots \\
x_{n-1} = 2rw_{n-1},
\end{split}
\end{equation*}
imply that $r=\frac{t_1+x_2}{2}$ and hence
\begin{equation}\label{fre}
w_1 = \frac{t_2}{t_1+x_2}, \\
w_2 = \frac{x_1}{t_1+x_2}, \\
w_3 = \frac{x_3}{t_1+x_2}, \\
w_4 = \frac{x_4}{t_1+x_2}, \\
\dots, \\
w_{n-1} = \frac{x_{n-1}}{t_1+x_2}.
\end{equation}
\end{proof}
If we let $H^n \subset G^n$ denote the elements $g \in G^n$ such that $p(g)
\in \pi(D_n)$ we get the Bruhat decomposition
$$
G^{n} = (N^-)^{n} M^{n} A (N^+)^{n} \cup H^{n}.
$$
For $g \in G^n, g \not \in H^n$ we write $g=\eta(g) m(g) a(g) n$. 

\begin{lemma}\label{extlemma} Let $\xi$ be a character of $A$. Then every 
$f \in C_0^{\infty}((N^-)^n)$ extends to a function $\hat{f}
\in C^{\infty}(G^n)$ such that $\hat{f}(x\tilde{m}\tilde{a}\tilde{n})
= \xi(\tilde{a}) \hat{f}(x)$ for all $\tilde{m} \tilde{a} \tilde{n}
\in P^n$ and $x \in G^n$.
\end{lemma}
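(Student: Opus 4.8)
The plan is to write the extension down explicitly on the big Bruhat cell, extend it by zero across the complement $H^n$, and then prove that the result is smooth — the first two steps being formal and the last one the only real point.

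First I would recall that the multiplication map $(N^-)^n\times M^n\times A\times(N^+)^n\to G^n$ is a diffeomorphism onto the open set $\Omega:=(N^-)^nM^nA(N^+)^n=G^n\setminus H^n$, so that the Bruhat components $g=\eta(g)m(g)a(g)n(g)$ depend smoothly on $g\in\Omega$. I then \emph{define}
\begin{equation*}
\hat f(g)=\begin{cases}\xi(a(g))\,f(\eta(g)),& g\in\Omega,\\ 0,& g\notin\Omega.\end{cases}
\end{equation*}
On $(N^-)^n$ the components $m,a,n$ are trivial, so $\hat f|_{(N^-)^n}=f$, as required.

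Next I would verify the equivariance $\hat f(x\tilde m\tilde a\tilde n)=\xi(\tilde a)\hat f(x)$ for all $x\in G^n$ and $\tilde m\tilde a\tilde n\in P^n$. If $x\notin\Omega$ then $x\tilde m\tilde a\tilde n\notin\Omega$ (since $\Omega P^n=\Omega$), and both sides vanish. If $x\in\Omega$, write $x=\eta(x)\,(m(x)a(x)n(x))$; using that $A$ commutes with $M$ and that $M$ and $A$ normalize $N^+$ (which follows from \eqref{bracket_rel} and the explicit description of $M\cap K$ and of $\langle w_1,w_2\rangle$), one moves the $A$-factors to the left inside $P^n$ to obtain $m(x)a(x)n(x)\,\tilde m\tilde a\tilde n=m'\,(a(x)\tilde a)\,n'$ with $m'\in M^n$, $n'\in(N^+)^n$. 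By uniqueness of the Bruhat factorization this gives $\eta(x\tilde m\tilde a\tilde n)=\eta(x)$ and $a(x\tilde m\tilde a\tilde n)=a(x)\tilde a$, hence $\hat f(x\tilde m\tilde a\tilde n)=\xi(a(x)\tilde a)f(\eta(x))=\xi(\tilde a)\hat f(x)$.

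It remains to prove $\hat f\in C^\infty(G^n)$, and this is the main obstacle. Since $\hat f$ is smooth on $\Omega$ by construction, it suffices to show that $S:=\operatorname{supp}(\hat f|_\Omega)$, a priori only closed in $\Omega$, is closed in $G^n$: then $G^n=\Omega\cup(G^n\setminus S)$ is an open cover, $\hat f$ is smooth on $\Omega$ and identically $0$ on $G^n\setminus S$, and the two descriptions agree on the overlap. To see that $S$ is closed in $G^n$, let $q:G^n\to G^n/P^n\simeq\EIN_{n-1}$ be the natural projection. For $g\in\Omega$ one has $q(g)=j(\eta(g))$ in the notation of Lemma \ref{GE_L} (because $P^n$ fixes the line through $(1,0,0,1,0_{n-3})$), so
\begin{equation*}
\{g\in\Omega:\hat f(g)\neq0\}\subseteq\eta^{-1}(\operatorname{supp}f)=q^{-1}\big(j(\operatorname{supp}f)\big).
\end{equation*}
Now $\operatorname{supp}f$ is compact, hence $j(\operatorname{supp}f)$ is a compact, therefore closed, subset of the Hausdorff manifold $\EIN_{n-1}$; thus $q^{-1}(j(\operatorname{supp}f))$ is closed in $G^n$, and it is contained in $q^{-1}(j(\mathbb M^{n-1}))=q^{-1}(\EIN_{n-1}\setminus\pi(D_n))=\Omega$. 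Consequently $S$, being closed in $\Omega$ and contained in this closed subset of $G^n$, is closed in $G^n$. This finishes the proof: everything apart from this last paragraph is routine bookkeeping with the Bruhat decomposition, and the essential input is the conformal compactification of Lemma \ref{GE_L}, which converts the compactness of $\operatorname{supp}f$ into the statement that $\operatorname{supp}(\hat f|_\Omega)$ does not accumulate on $H^n$.
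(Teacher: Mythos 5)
Your proof is correct and takes essentially the same route as the paper: both extend $f$ by zero across $H^n$ and then show smoothness by using the compactness of $\operatorname{supp} f$ together with the compactification $j$ of Lemma \ref{GE_L} (via the projection $G^n\to G^n/P^n\simeq\EIN_{n-1}$) to conclude that the support of the extension stays away from $H^n$. You package this globally (closedness of $\operatorname{supp}(\hat f|_\Omega)$ in $G^n$), whereas the paper argues locally at each $h\in H^n$ through explicit neighborhoods $U(\epsilon)$; the two are logically equivalent.
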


\begin{proof}

We define $\hat{f}(n^- \tilde{m} \tilde{a} \tilde{n} ) =
f(n^-) \xi(\tilde{a})$, where $n^- \in N^-, \tilde{m} \in M, \tilde{a}
\in A$ and $\tilde{n} \in N^+$, and $\hat{f}(g)=0$ for $g \in H^n$. To prove that $\hat{f}$ is smooth, it is sufficient to prove that for every
$h \in H^n$ there exists an open neighborhood of $h$ (in $G^n$), where
$\hat{f}$ is identically zero.

Assume that $h \in H^n$. Then $p(h) \in \pi(D_n)$. For any $\epsilon > 0$
the set $$
U(\epsilon) := \{ \pi( (t_1, t_2, x_1, x_2, x_3, \dots, x_{n-1}) )\, \vert \, t_1 + x_2 <
\epsilon \}
$$
is a neighborhood of
$p(h)$, and using the continuity of $p$ the set $p^{-1}( U(\epsilon) )$ is
a neighborhood of $h$. Now let $g \in G^n$. If $g P^n \in \text{Ein}_{n-1}$ is given by $\pi( (t_1, t_2, x_1, \dots,
x_{n-1}) )$ then using \eqref{fre} we get
$$\eta(g) =
n^{-}_{ \left(\frac{t_2}{t_1+x_2},\frac{x_1}{t_1+x_2},\frac{x_3}{t_1+x_2},
  \dots, \frac{x_{n-1}}{t_1+x_2} \right)}  ,$$

and using the compact support of $f$, we see that for sufficiently small
$\epsilon > 0$ the function $\hat{f}$ is identically zero in $p^{-1}(U(\epsilon))$.
\end{proof}
Let $R$ be the right-regular representation of $G$ on $C^\infty(G)$,
i.e., $(R(g)u)(x) = u(xg)$. We let $\mathcal{U}(\mathfrak{g})$
operate from the \emph{right} on $C^{\infty}(G)$ through the extension
of its differential
\begin{equation}\label{def-I}
(R(X)u)(x) = \frac{d}{dt} \Big\vert_{t=0} u(x \exp(tX)), \;\; X \in
\mathfrak{g}.
\end{equation}
Observe that for any $u \in C^{\infty}(G)$ and $X \in \mathfrak{g}$ we have
\begin{equation}\label{ad_komm}
R(\Ad(g)X ) u = R(g) \circ R(X) \circ R({g^{-1}}) u.
\end{equation}
Using $R$, elements of $\mathcal{U}(\mathfrak{g})$ induce
left-invariant differential operators on $C^{\infty}(G)$.

We now define principal series representations of
$G^m=O(2,m-1)$ induced from the parabolic subgroups $P^m, P_0^m$ and
$\hat{P}^m$ with respective Langlands decompositions
$$
P=MAN^+, \; P_0=M_0AN^+, \; \hat{P}=\hat{M} AN^+,
$$
where $\hat{M}=\langle w_1, w_2 \rangle M_0$.

We first define principal series as induced representations. For
$\lambda \in \c$ with corresponding character $\xi_{\lambda}$ and a
representation $\sigma: M \rightarrow \mathbb{C}^*$ let
\begin{multline*}
\text{Ind}_{P^m}^{G^m}(\xi_\lambda \otimes \sigma) = \\ \{ u \in
C^{\infty}(G^m) \, \vert \, u(g\tilde{m}\tilde{a} \tilde{n}) =
\sigma(\tilde{m}) \xi_{\lambda} (\tilde{a}) u(g),
\tilde{m}\Tilde{a}\tilde{n} \in P^m, g \in G^m \}.
\end{multline*}
$G^m$ acts on $\text{Ind}_{P^m}^{G^m}(\xi_\lambda \otimes \sigma)$ by
left translation, i.e.,
$$
(\pi_{\xi_{\lambda} \otimes \sigma}(g) u)(x) = u(g^{-1}x).
$$
Note that our definition differs from the convention in \cite{knapp}. 

We use the notation $\text{Ind}_{P^m}^{G^m}(\xi_\lambda) =
\text{Ind}_{P^m}^{G^m}(\xi_\lambda \otimes \sigma_+)$, where
$\sigma_+$ is the trivial representation of $M$. We also use the
notation $\pi_{\xi_{\lambda}}(g) = \pi_{\xi_{\lambda} \otimes
\sigma_+}(g)$. Let the representation $\sigma_{-}: M^{m} \rightarrow
\mathbb{C}$ be the unique representation such that $\sigma_{-}(x)=1$
for $x \in \langle w_1, w_2 \rangle M_0$, $\sigma_{-}(J_{(m)}) = -1$.

The principal series representations of $G^m$ induced from $\hat{P}^m$
and $P_0^m$ are defined in a similar way.

Let $K_M: = K \cap M$. In order to describe the \emph{compact
realization} of the principal series representation of $G^m$ (induced
from $P^m$), let
$$ 
C^\infty(K)^{K_M} = \{ F \in C^\infty(K) \,|\; F(k\tilde{m}) =
F(k), \; \tilde{m} \in K_M \}.
$$ 
Let $\delta: \text{Ind}_{P}^{G}(\xi_\lambda) \rightarrow
C^{\infty}(K)^{K_M}$ be the restriction map. Using the decomposition
$G = K M A N^+$ (\cite[prop 7.83(g)]{knapp2}), we write $g = \kappa(g)
\mu(g) e^{H(g)} \eta(g)$ and find
$$
\delta^{-1}(F)(g) = F(\kappa(g)) \xi_{\lambda}(e^{H(g)}).
$$ 
$\delta$ becomes an isomorphism of $G$-modules if on
$C^{\infty}(K)^{K_M}$ we define  
\begin{equation}
(\pi_{\lambda}^{\text{c}}(g) F)(k K_M) = F(\kappa(g^{-1}k) K_M)
\xi_{\lambda}(e^{H(g^{-1}k)}).
\label{EINMOD}
\end{equation}
Now \eqref{KFORMEL} gives a bijection
\begin{equation}\label{bij-1}
C^{\infty}(K^m)^{K^m_M} \rightarrow C^{\infty}(\EIN_{m-1}),
\end{equation}
and the equation \eqref{EINMOD} defines a $G$-module structure on
$C^{\infty}(\text{Ein}_{m-1})$. If we want to emphasize the parameter
$\lambda \in \mathbb{C}$ of $\pi_{\lambda}^{\text{c}}$ we write
$C^{\infty}(\EIN_{m-1})_{\lambda}$ for the representation space.

Let $K_{\hat{M}} = K \cap \hat{M}$. In order to describe the
\emph{compact realization} of the principal series representation of
$G^m$ (induced from $\PHAT^m$) let
$$ 
C^{\infty}(K)^{K_{\hat{M}}} = \{ F \in C^{\infty}(K)
\; | \; F(k \hat{m}) = F(k), \; \hat{m} \in K_{\hat{M}} \}.
$$ 
Let $\hat{\delta}: \text{Ind}_{\hat{P}}^{G}(\xi_\lambda) \rightarrow
C^{\infty}(K)^{K_{\hat{M}}}$ be the restriction map. Using the
decomposition $G = K \hat{M} A N^+$, we write $g = \kappa(g) \mu(g)
e^{H(g)} \eta(g)$ and find
$$
\hat{\delta}^{-1}(F)(g) = F(\kappa(g)) \xi_{\lambda}(e^{H(g)}).
$$
$\hat{\delta}$ becomes an isomorphism of $G$-modules if on
$C^{\infty}(K)^{K_{\hat{M}}}$ we define
$$ 
(\pi_{\lambda}^{\text{c}}(g) F)(kK_{\hat{M}}) =
F(\kappa(g^{-1}k)K_{\hat{M}}) \xi_{\lambda}(e^{H(g^{-1}k)}).
$$
Now \eqref{KFORMEL2} gives a bijection
\begin{equation}\label{bij-2}
C^{\infty}(K^m)^{K_{\hat{M}}^m} \rightarrow
C^{\infty}(\widehat{\EIN}_{m-1}).
\end{equation}
If we want to emphasize the parameter $\lambda \in \mathbb{C}$ of $\pi_{\lambda}^{\text{c}}$, we write
$C^{\infty}(\widehat{\EIN}_{m-1})_{\lambda}$ for the representation space.

We describe the \emph{non-compact realization} of the principal series
representation (of $G^m$ induced from $P^m$). Let $\beta_{\xi_{\lambda}
  \otimes \sigma}:
\text{Ind}_{P^m}^{G^m}(\xi_\lambda \otimes \sigma) \rightarrow C^{\infty}((N^-)^m)
\simeq C^{\infty}(\mathbb{R}^{m-1})$ be the restriction map. We define
$$
C^{\infty}((N^{-})^m )_{\xi_\lambda \otimes \sigma} := \beta_{\xi_{\lambda}
  \otimes \sigma}\left(
\text{Ind}_{P^m}^{G^m}(\xi_\lambda \otimes \sigma) \right).
$$
We know that $G^m=(N^-)^m M^m A (N^+)^m \cup H^m$, where $H^m \subset G^m$ is a
lower-dimensional set, and write $g= \eta(g) m(g) a(g) n$. We find
$$ 
(\beta_{\xi_{\lambda} \otimes \sigma} )^{-1}(F)(n^- man) = F(n^-)
\sigma(m) \xi_{\lambda}(a).
$$
The definition
$$ 
(\pi_{\xi_{\lambda} \otimes \sigma}^{\text{nc}}(g)F)(x) =
F(\eta(g^{-1}x)) \sigma(m(g^{-1}x)) \xi_{\lambda}(a(g^{-1} x)), \; F
\in C^{\infty}((N^-)^m)_{\xi_{\lambda} \otimes \sigma}
$$ 
for $x \in (N^-)^m$ makes $\beta_{\xi_{\lambda} \otimes \sigma}: 
\text{Ind}_{P^m}^{G^m}(\xi_\lambda) \rightarrow
C^{\infty}((N^-)^m)_{\xi_{\lambda} \otimes \sigma} $ into a
$G^m$-equivariant map.

\begin{lemma} \label{aux_lemma}
Let $u \in \mathcal{U}(\mathfrak{n}_{n+1}^{-})$ such that for every
$f\in \mathrm{Ind}^{G^{n+1}}_{P^{n+1}}(\xi_{\lambda})$ we have
$$
(u f)(e) = 0,
$$
where $e$ is the identity element of $G^{n+1}$. This implies that $u = 0$.
\end{lemma}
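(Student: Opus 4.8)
The plan is to exploit the non-compact realization of the principal series, which identifies $\mathrm{Ind}^{G^{n+1}}_{P^{n+1}}(\xi_\lambda)$ with a space of functions on $(N^-)^{n+1}$, together with the Bruhat decomposition $G^{n+1} = (N^-)^{n+1} M^{n+1} A (N^+)^{n+1} \cup H^{n+1}$. Since $\mathfrak{n}^-_{n+1}$ is abelian and $N^- = \exp(\mathfrak{n}^-_{n+1})$, the universal enveloping algebra $\mathcal{U}(\mathfrak{n}^-_{n+1})$ is the symmetric algebra, and acting on $C^\infty(G^{n+1})$ from the right via $R$ it becomes exactly the algebra of constant-coefficient differential operators in the coordinates $(x_1,\dots,x_n)$ on $(N^-)^{n+1} \simeq \mathbb{R}^n$. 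Writing $u = \sum_\alpha c_\alpha \partial^\alpha$ in these coordinates (with multi-index notation), the hypothesis says that $(uf)(e) = 0$ for all $f$ in the induced representation, where $e$ corresponds to the origin of $(N^-)^{n+1}$.

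The key step is therefore to produce, for each multi-index $\alpha$, a function $f \in \mathrm{Ind}^{G^{n+1}}_{P^{n+1}}(\xi_\lambda)$ whose restriction to $(N^-)^{n+1} \simeq \mathbb{R}^n$ behaves like $x^\alpha$ near the origin, so that $(uf)(e)$ picks out $\alpha! \, c_\alpha$; varying over all $\alpha$ one concludes $c_\alpha = 0$ for all $\alpha$, hence $u = 0$ as an element of $\mathcal{U}(\mathfrak{n}^-_{n+1})$. To build such an $f$, I would start with any compactly supported $\varphi \in C_0^\infty((N^-)^{n+1})$ agreeing with the polynomial $x^\alpha$ on a neighborhood of $0$, and invoke Lemma \ref{extlemma}: with $\xi = \xi_\lambda$, the function $\varphi$ extends to a smooth function $\hat{\varphi} \in C^\infty(G^{n+1})$ satisfying $\hat{\varphi}(x \tilde{m}\tilde{a}\tilde{n}) = \xi_\lambda(\tilde{a}) \hat{\varphi}(x)$ for all $\tilde{m}\tilde{a}\tilde{n} \in P^{n+1}$. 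Since $\sigma = \sigma_+$ is trivial in this case, this $\hat{\varphi}$ lies in $\mathrm{Ind}^{G^{n+1}}_{P^{n+1}}(\xi_\lambda)$, and its restriction to $(N^-)^{n+1}$ equals $\varphi$, which equals $x^\alpha$ near $e$. Applying the constant-coefficient operator $u$ at $e$ then depends only on the germ of $\hat{\varphi}$ at $e$ along $(N^-)^{n+1}$ — because $R(I_j)$ for $I_j \in \mathfrak{n}^-_{n+1}$ differentiates in the $N^-$-direction — so $(u \hat{\varphi})(e) = (u \varphi)(0) = (\sum_\beta c_\beta \partial^\beta)(x^\alpha)\big|_{x=0} = \alpha! \, c_\alpha$.

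The one point that needs a little care, and which I expect to be the main technical obstacle, is justifying that the right action of $u \in \mathcal{U}(\mathfrak{n}^-_{n+1})$ on $\hat{\varphi}$ at the identity is computed purely from the restriction of $\hat\varphi$ to $(N^-)^{n+1}$: one must check that the map sending $f \in \mathrm{Ind}^{G^{n+1}}_{P^{n+1}}(\xi_\lambda)$ to its restriction $\beta(f) \in C^\infty((N^-)^{n+1})$ intertwines the right action of $\mathcal{U}(\mathfrak{n}^-_{n+1})$ on $C^\infty(G^{n+1})$ with constant-coefficient differentiation in the coordinates $(x_1,\dots,x_n)$, at least in a neighborhood of $e$. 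This is because $R(X)$ for $X \in \mathfrak{n}^-_{n+1}$ differentiates $t \mapsto f(g \exp(tX))$, and for $g = n^- \in (N^-)^{n+1}$ near $e$ the product $n^- \exp(tX)$ stays in $(N^-)^{n+1}$ (the group $N^-$ being a subgroup), so iterating, $(u f)(n^-)$ equals $(\bar u \, \beta(f))(n^-)$ where $\bar u$ is the corresponding constant-coefficient operator under the identification \eqref{IDENT}. Once this intertwining property is in hand, the argument above goes through, and letting $\alpha$ range over all multi-indices forces every coefficient $c_\alpha$ to vanish, giving $u = 0$.
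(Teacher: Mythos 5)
Your proof is correct and follows essentially the same route as the paper: identify $(N^-)^{n+1}\simeq\mathbb{R}^n$, use Lemma~\ref{extlemma} to extend a cut-off monomial from $(N^-)^{n+1}$ to $\mathrm{Ind}^{G^{n+1}}_{P^{n+1}}(\xi_\lambda)$, observe that $(uf)(e)$ is computed by applying the corresponding constant-coefficient operator to that monomial at the origin, and conclude by varying the monomial. The intertwining point you flag as the "main technical obstacle" is exactly the fact the paper uses implicitly — it holds because $N^-$ is a subgroup containing $e$, so right-translation by $\exp(tX)$ with $X\in\mathfrak{n}^-_{n+1}$ stays inside $N^-$.
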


\begin{proof} We use the notation \eqref{NBASIS}. Observe that that 
$N^{-}_{n+1} = \exp(\mathfrak{n}_{n+1}^{-})$. Let $f
\left(\exp(\sum_{i=1}^{n} x_i I_{i}) \right) = \xi(x_1, \dots, x_n)
p(x_1, \dots, x_n)$, where $p$ is a polynomial and $\xi$ a smooth
cut-off function such that $\xi(x_1, \dots, x_n) = 1$ for $x_1^2 +
\dots + x_n^2 < 1$ and $\xi(x_1, \dots, x_n) = 0$ for $x_1^2 + \dots +
x_n^2 > 2$. According to Lemma \ref{extlemma} $f$ extends to a smooth
function $f \in \text{Ind}^{G^{n+1}}_{P^{n+1}}(\xi_{\lambda})$. Note
that for \emph{any} polynomial $p$ and $u \in
\mathcal{U}(\mathfrak{n}_{n+1}^{-})$, $u=(I_{1})^{k_1} \dots
(I_{n})^{k_n}$ we get
$$
(uf)(e) = \left( \left(
\frac{\partial}{\partial x_1} \right)^{k_1} \left(
\frac{\partial}{\partial x_2} \right)^{k_2} \dots \left(
\frac{\partial}{\partial x_n}\right)^{k_n} \right) p (0).
$$ 
We see that if we let $p(x_1, \dots, x_n) = x_1^{l_1} \dots
x_n^{l_n}$ then $(uf)(e) = 0$ if and only if $u \in
\mathcal{U}(\mathfrak{n}^{-}_{n+1})$ contains no monomial of the form
$I_{1}^{l_1} \dots I_{n}^{l_n}$. This completes the proof.
\end{proof}

\section{Powers of the Laplacian and algebraic constructions}
\label{algebra}
\label{invariants}

Under the identification \eqref{IDENT}, the d'Alembertian
$\Delta_{\mathbb{M}^{n-1}}$ is induced by
\begin{equation}
\LAP_{n-1} = -(Q_1^-)^2 + (Q_2^-)^2 + \sum_{j=1}^{n-3} (Y_{j}^{-})^2
\in \mathcal{U}(\mathfrak{n}_n^-) \subset \mathcal{U}(\mathfrak{g}_n).
\label{LAP}
\end{equation}
$\LAP_{n-1}$ commutes with $\mathfrak{m}_n$. Moreover,

\begin{proposition}\label{kommprop} $P \in 
\mathcal{U}(\mathfrak{n}_{n}^{-})$ satisfies $[X,P]=0$ for all $X
\in \mathfrak{m}_n$ if and only if $P$ is a polynomial in $\LAP_{n-1}$.
\end{proposition}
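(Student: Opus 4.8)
\emph{Proof plan.} The plan is to reduce the assertion to a classical fact of invariant theory. Since $[\mathfrak{n}^-,\mathfrak{n}^-]=0$ by \eqref{bracket_rel}, the subalgebra $\mathcal{U}(\mathfrak{n}_n^-)\subset\mathcal{U}(\mathfrak{g}_n)$ is the symmetric algebra $S(\mathfrak{n}_n^-)$, a polynomial ring in the basis $I_1,\dots,I_{n-1}$ of $\mathfrak{n}_n^-$ from \eqref{NBASIS}. Because $[\mathfrak{m}_n,\mathfrak{n}_n^-]\subset\mathfrak{n}_n^-$, the operator $[X,\cdot]$ for $X\in\mathfrak{m}_n$ preserves $\mathcal{U}(\mathfrak{n}_n^-)$ and acts on $S(\mathfrak{n}_n^-)$ as the derivation extending the linear map $\mathrm{ad}(X)\colon\mathfrak{n}_n^-\to\mathfrak{n}_n^-$. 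Hence $[X,P]=0$ for all $X\in\mathfrak{m}_n$ is exactly the statement that $P$ lies in the invariant subalgebra $S(\mathfrak{n}_n^-)^{\mathfrak{m}_n}$. The ``if'' direction is then immediate, since $\Delta_{n-1}$ is invariant (as noted just before the proposition) and the invariants form a subalgebra.

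For the ``only if'' direction I would first identify the representation-theoretic data. The element $\Delta_{n-1}\in S^2(\mathfrak{n}_n^-)$ is a nondegenerate quadratic form, being diagonal with nonzero entries in the basis $I_1,\dots,I_{n-1}$ (see \eqref{LAP}); and the fact that $\Delta_{n-1}$ commutes with $\mathfrak{m}_n$ means precisely that each $\mathrm{ad}(X)$, $X\in\mathfrak{m}_n$, is skew-symmetric for the associated symmetric bilinear form, i.e. $\mathrm{ad}$ maps $\mathfrak{m}_n$ into $\mathfrak{so}(\Delta_{n-1})$. Now $\mathfrak{m}_n\cong\mathfrak{so}(1,n-2)$ is simple for $n\ge4$, and the action on $\mathfrak{n}_n^-$ is nonzero: for instance $\mathrm{ad}(H_2)$ acts on $\mathfrak{g}_{-f_1-f_2}\subset\mathfrak{n}_n^-$ by the nonzero scalar $(-f_1-f_2)(H_2)=-1$. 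Therefore the Lie algebra homomorphism $\mathrm{ad}\colon\mathfrak{m}_n\to\mathfrak{so}(\Delta_{n-1})$ has trivial kernel, and since $\dim\mathfrak{m}_n=\binom{n-1}{2}=\dim\mathfrak{so}(\Delta_{n-1})$, it is an isomorphism. Consequently $S(\mathfrak{n}_n^-)^{\mathfrak{m}_n}=S(\mathfrak{n}_n^-)^{\mathfrak{so}(\Delta_{n-1})}$.

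It remains to show that the invariants of $\mathfrak{so}(\Delta_{n-1})$ in the polynomial algebra of its defining representation are exactly the polynomials in the invariant quadratic form $\Delta_{n-1}$. This is the first fundamental theorem of invariant theory for the orthogonal Lie algebra, and I would deduce it by complexifying: $\mathfrak{so}(\Delta_{n-1})\otimes\mathbb{C}\cong\mathfrak{so}(n-1,\mathbb{C})$ acting on $\mathbb{C}^{n-1}$ with $n-1\ge3$, together with the harmonic decomposition $S^d(\mathbb{C}^{n-1})=\bigoplus_{0\le j\le d/2}\Delta_{n-1}^{\,j}\,\mathcal{H}^{d-2j}$, where each space $\mathcal{H}^k$ of harmonic polynomials of degree $k$ is an irreducible nontrivial $\mathfrak{so}(n-1,\mathbb{C})$-module for $k\ge1$. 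Hence the invariants in degree $d$ reduce to $\mathbb{C}\cdot\Delta_{n-1}^{\,d/2}$ when $d$ is even, and to $0$ when $d$ is odd. Finally, invariance under $\mathfrak{m}_n$ is a system of linear equations with real coefficients, so the dimension of the invariant subspace in each degree is the same over the base field as over its complexification; it follows that $S(\mathfrak{n}_n^-)^{\mathfrak{m}_n}$ is spanned by the powers of $\Delta_{n-1}$, which is the claim.

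The step I expect to be the main obstacle is the structural identification in the middle paragraph: namely establishing that the pair $(\mathfrak{m}_n,\mathfrak{n}_n^-)$ endowed with $\Delta_{n-1}$ is genuinely the defining representation of the full orthogonal Lie algebra, which requires the simplicity of $\mathfrak{m}_n$, the non-triviality of the action on $\mathfrak{n}_n^-$, and the dimension count $\dim\mathfrak{m}_n=\binom{n-1}{2}$. Once this is in place, the invariant-theoretic conclusion is standard and the ``only if'' direction follows.
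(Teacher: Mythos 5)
Your argument is correct, but it takes a genuinely different route from the paper's. The paper proves the ``only if'' direction by a direct orbit computation: it shows, by exhibiting explicit one-parameter subgroups of $M_0^n$ acting as boosts and rotations on $\mathfrak{n}_n^-$, that $\Ad(M_0^n)$ acts transitively on each level set $\{X\in\mathfrak{n}_n^-:\langle X,X\rangle=c\}$ with $c>0$; it then passes from $P$ to the polynomial function $P^*$ on $\mathfrak{n}_n^-$, observes that $P^*$ is constant on these level sets, evaluates along the line $\mathbb{R}Q_2^-$ to extract the coefficients $a_k$, and concludes $P^*(X)=\sum a_k\langle X,X\rangle^k$ first on the open cone $\langle X,X\rangle>0$ and then everywhere by polynomial rigidity. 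You instead identify $\mathrm{ad}\colon\mathfrak{m}_n\to\mathfrak{so}(\Delta_{n-1})$ as an isomorphism (using simplicity of $\mathfrak{so}(1,n-2)$, a nontriviality check on $H_2$, and the dimension count $\binom{n-1}{2}$, all valid since $n\ge4$), which reduces the claim to the first fundamental theorem of invariant theory for the orthogonal Lie algebra, which you then establish via the harmonic decomposition $S^d=\bigoplus_j\Delta_{n-1}^j\mathcal{H}^{d-2j}$ (here $n-1\ge3$ is needed for irreducibility of $\mathcal{H}^k$, again guaranteed). Your approach is cleaner conceptually and more portable to other signatures and groups, whereas the paper's argument is self-contained and elementary, requiring no representation-theoretic machinery beyond explicit $\Ad$-computations; it also sidesteps the need to prove that $\mathrm{ad}$ is onto all of $\mathfrak{so}(\Delta_{n-1})$, since transitivity on level sets is checked by hand. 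One small remark: since $\mathcal{U}(\mathfrak{n}_n^-)$ is already taken over $\mathbb{C}$ in the paper (the map $P^*$ has values in $\mathbb{C}$), your final complexification step is harmless but unnecessary.
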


\begin{proof} Although the result is well-known, we give a proof for the 
sake of completeness; the argument is similar to
\cite[p. 270-271]{helgason_diff}.  It is easy to see that if $P$ is a
polynomial in $\LAP_{n-1}$, then $[X,P]=0$ for all $X \in
\mathfrak{m}_n$. We now prove that if $[X,P]=0$ for all $X \in
\mathfrak{m}_n$, then $P$ is a polynomial in $\LAP_{n-1}$. Observe that $M_0^n = \exp(\mathfrak{m}_n)$ is generated
by $A_j(t), B_j(t), C_j(t), D_{i j}(t)$, $1 \leq i, j \leq n-3$, $t
\in \mathbb{R}$, where
\begin{eqnarray*}
& A_j(t) = \exp \left( \frac{Z_j^+ + Z_j^-}{2} t \right), & B_j(t) =
\exp \left( \frac{Z_j^+ - Z_j^-}{2} t \right), \\ & C_j(t) = \exp
\left( H_2 t \right), & D_{i \, j}(t) = \exp \left( M_{ij} t \right).
\end{eqnarray*}
The Lorentzian metric on $\mathbb{M}^{n-1}$ induces the following
non-degenerate bilinear form on $\mathfrak{n}_{n}^{-}$:
\begin{multline}\label{ICKEDEG}
\langle x_1 Q_1^- + x_2 Q_2^- + \sum_{i=3}^{n-1} x_{i} Y_{i-2}^- ,
y_1 Q_1^- + y_2 Q_2^- + \sum_{i=3}^{n-1} y_{i} Y_{i-2}^-   \rangle \\
= - x_1 y_1 + x_2 y_2 + \sum_{i=3}^{n-1} x_{i} y_{i}.
\end{multline}
We prove that for any $Y \in \mathfrak{n}_n^-$ such that $\langle Y,Y
\rangle = 1$ there exists some $m \in M_0^n$ satisfying
\begin{equation}
\Ad(m) Q_2^- =  Y.
\label{slutf}
\end{equation}
Let $Y = x_1 Q_1^- + x_2 Q_2^- + x_3 Y_1^- +
\dots + x_{n-1} Y_{n-3}^-$. For some $t \in \mathbb{R}$ we have
\begin{equation}
\Ad \left( \exp( t H_2 ) \right ) Q_2^- = Q_2^- \cosh t + Q_1^- \sinh t =
Q_2^- \cosh t + x_1 Q_1^-.
\label{INV_1}
\end{equation}
Using
\begin{equation}
\begin{split} \label{ADBER}
\Ad \left( B_j(t) \right) \left(
Q_2^- \sin \alpha  + Y_j^- \cos \alpha  \right) & =
Q_2^- \sin(\alpha + t) + Y_j^- \cos(\alpha + t), \\
\Ad \left( \exp( M_{i j} t ) \right) \left( Y_j^- \cos \alpha + Y_i^- \sin
\alpha \right) &= Y_j^- \cos(\alpha+t) + Y_i^- \sin(\alpha+t),
\end{split}
\end{equation}
we see that $\Ad \left( B_j(t) \right)$ and $\Ad \left( D_{ij}(t)
\right)$ acts as rotations in the subspace spanned by $Q_2^-, Y_1^- ,
\dots, Y_{n-3}^-$ leaving the subspace spanned by $Q_1^-$ invariant,
and \eqref{slutf} follows. We conclude that
\begin{enumerate}
\item $\Ad(m) (\mathfrak{n}_n^-) \subset 
\mathfrak{n}_n^- $ for all $m \in M_0^n$,
\item $\langle X,Y \rangle = \langle \Ad(m)X, \Ad(m)Y \rangle$ for all $X,Y \in 
\mathfrak{n}_n^- $, $m \in  M_0^n$,
\item $\Ad(M_0^n)$ acts transitively on the set $\{ X \in \mathfrak{n}^-_n \,
\vert \, X \neq 0, \, \langle X,X \rangle=c \}$ for each real number $c > 0$.
\end{enumerate}
In the following we choose 
\begin{equation}
X_1 = Q_1^-, X_2 = Q_2^-, X_3=Y_1^-, \dots, X_{n-1} =
Y_{n-3}^-
\end{equation} 
as a basis for $\mathfrak{n}_n^-$. Now assume that $[X,P]=0$ for all
$X \in \mathfrak{m}_n$. This implies that $\Ad(m)P=P$ for all $m \in
M_0^n$. Let $P=\sum a_{r_1 \dots r_{n-1}} X_1^{r_1} X_2^{r_2} \dots
X_{n-1}^{r_{n-1}}$. We define $P^*: \mathfrak{n}^-_n \rightarrow
\mathbb{C}$ by
\begin{equation}
P^{*}(\sum_{i=1}^{n-1} x_i X_i ) = \sum a_{r_1 \dots r_{n-1}}
x_1^{r_1} x_2^{r_2} \dots x_{n-1}^{r_{n-1}}.
\label{PSTAR}
\end{equation}
The Lie algebra $\mathfrak{n}^-_n$ is abelian, and hence
\begin{equation}\label{nabel}
(PQ)^*(\sum x_i X_i) = P^*(\sum x_i X_i) Q^*(\sum x_i X_i),
\end{equation}
which implies 
\begin{equation}\label{rot_for}
P^{*} (\Ad(m)^{-1} X) = (\Ad(m)P)^{*}(X).
\end{equation}
Using $\Ad(m)P=P$ for all $m \in M_0^n$ and \eqref{rot_for}, we
conclude that $P^{*}( \Ad(m)X)=P^{*}(X)$ for all $m \in M_0^n$ and $X
\in \mathfrak{n}^-_n$. Using (2) and (3) above, we see that for each
real number $c > 0$ the function $P^{*}$ is constant on $B_c := \{ X
\in \mathfrak{n}^-_n \, \vert \, \langle X, X \rangle = c\}$.

We get $P^{*}(x_2 X_2) = P^{*}(-x_2 X_2)$ and hence for some numbers
$a_k$ we have
\begin{equation*}
P^{*}(x_2 X_2) = \sum_{k=0}^{N} a_k (x_2)^{2k} = \sum_{k=0}^{N} a_k \langle
x_2 X_2, x_2 X_2 \rangle^{k}.
\end{equation*}
Using that for $c > 0$ the function $P^*$ is constant on $B_c$, we conclude that
for $X \in \mathfrak{n}_{n}^-$, $\langle X,X \rangle > 0 $
we have
\begin{equation}
P^{*}(X) = \sum_{k=0}^{N} a_k \langle X,X \rangle^k.
\label{lik}
\end{equation}
If we let $X = \sum_{i=1}^{n-1} x_i X_i$ and use \eqref{PSTAR},
together with $\langle X,X \rangle = -x_1^2 + x_2^2 + \dots +
x_{n-1}^2$, we get
\begin{equation}
 \sum a_{r_1 \dots r_{n-1}}
x_1^{r_1} x_2^{r_2} \dots x_{n-1}^{r_{n-1}} = \sum_{k=0}^{N} a_k (-x_1^2 +
x_2^2 + \dots + x_{n-1}^2)^k \,\, 
\end{equation}
on the open set $\{ (x_1, x_2, \dots, x_{n-1}) \in \mathbb{R}^{n-1} \,
\vert \, -x_1^2 + x_2^2 + \dots + x_{n-1}^2 > 0 \}$. But since two
polynomials coincide if they coincide on a non-empty open set,
\eqref{lik} is valid for all $X \in \mathfrak{n}_n^-$. We conclude
that
$$
P = \sum_{k=0}^N a_k (-X_1^2 + \dots + X_{n-1}^2)^k.
$$
\end{proof}

The following result will be used to construct the induced operator
families. It will be convenient to use the notation $T_{n} =
Y_{n-2}^{-} \in \mathfrak{n}^{-}_{n+1}$.

\begin{theorem}\label{alg_core} Let $\lambda \in \mathbb{C}$.
\begin{equation}
\mathcal{D}_{2N}(\lambda) = \sum_{j=0}^{N} a_{j}(\lambda)
(\LAP_{n-1})^j (T_{n})^{2N-2j} \in \mathcal{U}(\mathfrak{n}^-_{n+1})
\label{alg_core_form}
\end{equation}
satisfies
\begin{equation}\label{ekvation_att_visa}
[X,\mathcal{D}_{2N}(\lambda) ] \in \ARBIT
\end{equation}
for all $X \in \mathfrak{n}_n^+$ if and only if
\begin{equation}\label{koeff_villkor}
(N-j+1)(2N-2j+1) a_{j-1}(\lambda) + j(n-1+2\lambda-4N+2j) a_j(\lambda) = 0
\end{equation}
for $j=1,\ldots,N$. 
\end{theorem}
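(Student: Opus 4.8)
The plan is to translate the membership \eqref{ekvation_att_visa} into a computation inside the commutative polynomial algebra $\mathcal{U}(\mathfrak{n}^-_{n+1})$ and then compare coefficients.

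\smallskip
\noindent\textbf{Reduction to a single generator.} Since $\mathfrak{n}^-_{n+1}$ is abelian, $\mathcal{U}(\mathfrak{n}^-_{n+1})$ is a polynomial ring and $[\LAP_{n-1},T_n]=0$, so $\mathcal{D}_{2N}(\lambda)$ is an honest polynomial in $\LAP_{n-1}$ and $T_n$. By Proposition \ref{kommprop} the element $\LAP_{n-1}$ commutes with $\mathfrak{m}_n$, and $T_n=Y^-_{n-2}$ commutes with $\mathfrak{m}_n$ as well (inspection of the matrices: $T_n$ involves only the coordinates $1,4,n+2$, which $\mathfrak{m}_n$ does not touch), so $\mathcal{D}_{2N}(\lambda)$ commutes with $\mathfrak{m}_n$. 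The space $\ARBIT$ is $\mathrm{ad}(\mathfrak{m}_n)$-stable because $\mathfrak{m}_n\subset\mathfrak{m}_{n+1}$, $[\mathfrak{m}_{n+1},\mathfrak{n}^-_{n+1}]\subset\mathfrak{n}^-_{n+1}$, $[\mathfrak{m}_{n+1},\mathfrak{m}_{n+1}]\subset\mathfrak{m}_{n+1}$ and $[\mathfrak{m}_n,H]=0$. Finally $\mathfrak{n}_n^+=\mathfrak{g}_{f_1}\oplus\mathfrak{g}_{f_1+f_2}\oplus\mathfrak{g}_{f_1-f_2}$ is generated as an $\mathrm{ad}(\mathfrak{m}_n)$-module by the simple root space $\mathfrak{g}_{f_1}$, since $[\mathfrak{g}_{\pm f_2},\mathfrak{g}_{f_1}]=\mathfrak{g}_{f_1\pm f_2}$ with $\mathfrak{g}_{\pm f_2}\subset\mathfrak{m}_n$. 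Hence, via the Jacobi identity $[[Z,X],\mathcal{D}_{2N}(\lambda)]=[Z,[X,\mathcal{D}_{2N}(\lambda)]]$ for $Z\in\mathfrak{m}_n$, it suffices to verify \eqref{ekvation_att_visa} for $X\in\mathfrak{g}_{f_1}$, and by the rotational $\mathrm{ad}(\mathfrak{m}_n)$-symmetry among the $Y^+_j$ we may take $X=Y_1^+$.

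\smallskip
\noindent\textbf{Reduction modulo $\ARBIT$.} From the commutation relations one reads off
\begin{equation*}
[Y_1^+,Y_1^-]=2H,\quad [Y_1^+,Y_k^-]=2M_{1k}\ (k\neq1),\quad [Y_1^+,Q_1^-]=-(Z_1^++Z_1^-),\quad [Y_1^+,Q_2^-]=Z_1^--Z_1^+,
\end{equation*}
all of which lie in $\mathfrak{m}_{n+1}\oplus\mathbb{C}H$; in particular $[Y_1^+,T_n]=2M_{1,n-2}$. Consequently $[Y_1^+,\mathcal{D}_{2N}(\lambda)]$, computed by the Leibniz rule, has no $\mathfrak{n}^+_{n+1}$-component and is of degree $\le1$ in $\mathfrak{m}_{n+1}\oplus\mathfrak{a}$; an element of this kind lies in $\ARBIT$ if and only if its image under the projection $\pi\colon\mathcal{U}(\mathfrak{g}_{n+1})\to\mathcal{U}(\mathfrak{n}^-_{n+1})$ that kills $\mathfrak{n}^+_{n+1}$ and $\mathfrak{m}_{n+1}$ and sends $H\mapsto\lambda$ vanishes (a direct consequence of the PBW decomposition; equivalently, $[Y_1^+,\mathcal{D}_{2N}(\lambda)]\otimes1=0$ in $\mathcal{M}_\lambda(\mathfrak{g}_{n+1})$). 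To evaluate $\pi$ one pushes the single $\mathfrak{m}_{n+1}$- or $H$-letter of each term to the right through the $\mathfrak{n}^-_{n+1}$-letters: this produces the derivation $\mathrm{ad}$ of those letters, which annihilates both $\LAP_{n-1}$ and $T_n$ for \emph{every} element of $\mathfrak{m}_n$ (Proposition \ref{kommprop} and inspection), acts as $-\mathrm{id}$ on $\mathfrak{n}^-_{n+1}$ for $H$, and sends $Y_1^-\mapsto -T_n$, $T_n\mapsto Y_1^-$ and the remaining generators to $0$ for $M_{1,n-2}$; a letter reaching the right end becomes $\lambda$ (for $H$) or $0$ (for $\mathfrak{m}_{n+1}$).

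\smallskip
\noindent\textbf{The coefficient computation.} Write $\mathcal{D}_{2N}(\lambda)=\sum_{j=0}^N a_j(\lambda)\LAP_{n-1}^j T_n^{2N-2j}$ in the monomial order ``all $\LAP_{n-1}$'s, then all $T_n$'s''. When $\mathrm{ad}(Y_1^+)$ hits a $T_n$-slot of $\LAP_{n-1}^jT_n^{2N-2j}$ it yields $2M_{1,n-2}$, and since only $T_n$'s stand to its right, moving it across and summing over the $2N-2j$ slots gives $(2N-2j)(2N-2j-1)\,Y_1^-\LAP_{n-1}^jT_n^{2N-2j-2}$. When $\mathrm{ad}(Y_1^+)$ hits a $\LAP_{n-1}$-slot one uses
\begin{equation*}
\mathrm{ad}(Y_1^+)(\LAP_{n-1})=4Y_1^-H+4\sum_{k=2}^{n-3}Y_k^-M_{1k}+2Q_1^-(Z_1^++Z_1^-)+2Q_2^-(Z_1^--Z_1^+)+(2n-6)Y_1^- ;
\end{equation*}
every summand carrying an $\mathfrak{m}_n$-factor is annihilated by $\pi$ because the corresponding $\mathrm{ad}$ kills $\LAP_{n-1}$ and $T_n$, so only $4Y_1^-H$ and $(2n-6)Y_1^-$ survive, and moving $H$ across $\LAP_{n-1}^{\,j-1-i}T_n^{2N-2j}$ and summing over the $j$ slots yields $2j(n-1+2\lambda-4N+2j)\,Y_1^-\LAP_{n-1}^{\,j-1}T_n^{2N-2j}$. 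Collecting,
\begin{equation*}
\pi\big([Y_1^+,\mathcal{D}_{2N}(\lambda)]\big)=\sum_{j=1}^{N}\Big(2j(n-1+2\lambda-4N+2j)a_j(\lambda)+(2N-2j+2)(2N-2j+1)a_{j-1}(\lambda)\Big)Y_1^-\LAP_{n-1}^{\,j-1}T_n^{2N-2j}.
\end{equation*}
The polynomials $Y_1^-\LAP_{n-1}^{\,j-1}T_n^{2N-2j}$, $j=1,\dots,N$, are linearly independent in $\mathcal{U}(\mathfrak{n}^-_{n+1})$ (their leading terms $(Y_1^-)^{2j-1}T_n^{2N-2j}$ are pairwise distinct), so \eqref{ekvation_att_visa} holds if and only if each bracketed coefficient vanishes; since $(2N-2j+2)(2N-2j+1)=2(N-j+1)(2N-2j+1)$, dividing by $2$ gives precisely \eqref{koeff_villkor}.

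\smallskip
\noindent\textbf{Main obstacle.} The delicate point is the collapse in the last step: a priori $[Y_1^+,\mathcal{D}_{2N}(\lambda)]$ spawns numerous terms carrying $Q_i^-$-, $Y_k^-$- and $M_{1k}$-factors, and one must see that after reduction modulo $\ARBIT$ all of these cancel, leaving only the two displayed families. This cancellation is exactly what the $\mathfrak{m}_n$-invariance of $\LAP_{n-1}$ (Proposition \ref{kommprop}) and of $T_n$ are used for, together with the choice of monomial ordering (``$\LAP_{n-1}$'s before $T_n$'s''), which keeps $\mathrm{ad}(M_{1,n-2})$—the one relevant derivation not annihilating $\LAP_{n-1}$—from ever acting on a $\LAP_{n-1}$-factor. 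Care is also needed with the normalizations $[Y_1^+,Y_1^-]=2H$ and $[Y_1^+,T_n]=2M_{1,n-2}$ and with the Leibniz bookkeeping, since the quadratic factor $(2N-2j+1)(2N-2j+2)$ in \eqref{koeff_villkor} arises only from summing $\mathrm{ad}(M_{1,n-2})$ over all $T_n$-slots.
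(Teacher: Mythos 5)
Your proof is correct and follows essentially the same strategy as the paper: compute $[Y_1^+,\mathcal{D}_{2N}(\lambda)]$ explicitly modulo $\ARBIT$, match coefficients using PBW linear independence, and then extend from $Y_1^+$ to all of $\mathfrak{n}_n^+$ by $\mathfrak{m}_n$-symmetry. The only genuine deviation is the extension step: the paper conjugates by specific group elements $\tilde{M}\in M_0^n$ ($\exp(-\tfrac{\pi}{2}M_{1j})$, $\exp(Z_1^\pm)$) and uses $\Ad(\tilde{M})\ARBIT\subset\ARBIT$ together with $\Ad(\tilde{M})\mathcal{D}_{2N}(\lambda)=\mathcal{D}_{2N}(\lambda)$, whereas you argue infinitesimally via the Jacobi identity and the observation that the $\mathrm{ad}(\mathfrak{m}_n)$-module generated by $Y_1^+$ is all of $\mathfrak{n}_n^+$; both are valid and amount to the same thing. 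Your coefficient bookkeeping reproduces the paper's inductive Lemmas \ref{juhl_lemma} and \ref{indlemma} in a single pass via the projection $\pi$ killing $\mathfrak{n}_{n+1}^+$, $\mathfrak{m}_{n+1}$, and sending $H\mapsto\lambda$; the justification that $\pi$-vanishing is equivalent to membership in $\ARBIT$ for elements of $\mathfrak{m}\oplus\mathfrak{a}$-degree $\le 1$ with no $\mathfrak{n}^+$-component is correct and is implicitly what the paper uses.
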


\begin{theorem}\label{alg_core2} Let $\lambda \in \mathbb{C}$. 
$$
\mathcal{D}_{2N+1}(\lambda) = \sum_{j=0}^{N} b_j(\lambda) (\LAP_{n-1})^{j}
(T_n)^{2N-2j+1}  \in \mathcal{U}(\mathfrak{n}^-_{n+1})$$ satisfies
\begin{equation*}
[X, \mathcal{D}_{2N+1}(\lambda) ] \in \ARBIT
\end{equation*}
for all $X \in \mathfrak{n}_n^+$ if and only if
\begin{equation}\label{koeff_villkor2}
(N-j+1)(2N-2j+3) b_{j-1}(\lambda) + j(n-3+2\lambda-4N+2j) b_j(\lambda) = 0
\end{equation}
for all $j=1, \dots, N$.
\end{theorem}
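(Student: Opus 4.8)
The statement for $\mathcal{D}_{2N+1}(\lambda)$ is structurally identical to Theorem~\ref{alg_core}, so the plan is to run the same computation, keeping careful track of how the odd exponent of $T_n$ changes the combinatorial factors. First I would record the relevant commutators. Since $\mathfrak{n}_n^+$ is spanned by $\mathfrak{g}_{f_1}$, $\mathfrak{g}_{f_1+f_2}$ and $\mathfrak{g}_{f_1-f_2}$, and we are commuting against elements of $\mathcal{U}(\mathfrak{n}_{n+1}^-)$, the point is to understand $[X, T_n]$, $[X,\LAP_{n-1}]$ and, crucially, how iterated brackets land modulo $\ARBIT$. Because $T_n = Y_{n-2}^- \in \mathfrak{n}_{n+1}^-$ is the ``new'' root vector coming from the embedding $\mathfrak{g}_n \subset \mathfrak{g}_{n+1}$, its bracket with $X \in \mathfrak{n}_n^+$ produces elements of $\mathfrak{m}_{n+1} \oplus \mathfrak{a}$ (and lower order terms), whereas $[X,\LAP_{n-1}]$ already lies, modulo the ideal $\ARBIT$, in a controlled space — indeed $\LAP_{n-1}$ commutes with $\mathfrak{m}_n$ by Proposition~\ref{kommprop}, and one checks it is annihilated modulo the ideal by $\mathfrak{n}_n^+$ as well (this is presumably established in the proof of the invariance of the d'Alembertian referred to in Section~\ref{algebra}).

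The core computation is then the following. Write $\mathcal{D}_{2N+1}(\lambda) = \sum_{j=0}^N b_j(\lambda)(\LAP_{n-1})^j (T_n)^{2N-2j+1}$ and expand $[X,\mathcal{D}_{2N+1}(\lambda)]$ using the Leibniz rule in $\mathcal{U}(\mathfrak{n}_{n+1}^-)$. The terms where $X$ hits a factor of $\LAP_{n-1}$ contribute to the ideal $\ARBIT$ and may be discarded. The surviving obstruction comes from $X$ hitting one of the $2N-2j+1$ copies of $T_n$: schematically $[X,(T_n)^{2N-2j+1}]$ produces $(2N-2j+1)(T_n)^{2N-2j}[X,T_n]$ plus a second-order piece $\binom{2N-2j+1}{2}(T_n)^{2N-2j-1}[[X,T_n],T_n]$ (no further terms, since $\mathfrak{n}_{n+1}^-$ is abelian so all the $T_n$'s commute and the only non-commuting object is $[X,T_n]$, whose further bracket with $T_n$ is central-ish in the relevant quotient). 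One then commutes the $\mathfrak{m}_{n+1}\oplus\mathfrak{a}$-valued factor $[X,T_n]$ back past the remaining $(T_n)$-powers and past the $(\LAP_{n-1})^j$, modulo $\ARBIT$; the $H$-eigenvalue bookkeeping is exactly what turns $\exp(tH)$-weights into the linear-in-$\lambda$ coefficient $n-3+2\lambda-4N+2j$, and the combinatorial factors $(2N-2j+1)$ and $\binom{2N-2j+1}{2}$ (together with the shift $j \to j-1$ coming from $[[X,T_n],T_n]$ producing a $\LAP$-type term, after averaging over the relevant root vectors $X$) collapse to $(N-j+1)(2N-2j+3)$. Collecting the coefficient of each monomial $(\LAP_{n-1})^j (T_n)^{2N-2j+1}$ modulo the ideal gives precisely \eqref{koeff_villkor2}, and vanishing of all these coefficients is equivalent to \eqref{ekvation_att_visa}. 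The reverse implication is the same identity read backwards: these are the only obstructions, so if \eqref{koeff_villkor2} holds then $[X,\mathcal{D}_{2N+1}(\lambda)] \in \ARBIT$ for every $X \in \mathfrak{n}_n^+$.

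The main obstacle I anticipate is precisely the second-order Leibniz term: one must verify that $[[X,T_n],T_n]$, after summing over a spanning set of $X \in \mathfrak{n}_n^+$ and projecting modulo $\ARBIT$, reproduces exactly a $\LAP_{n-1}$-factor with the right constant, i.e.\ that the ``new direction'' $T_n$ interacts with the old d'Alembertian $\LAP_{n-1}$ in the expected way. This is where the explicit structure constants of $\mathfrak{o}(2,n)$ from the appendix, and the relations \eqref{bracket_rel}, have to be used carefully; everything else is eigenvalue bookkeeping under $\mathrm{ad}(H)$. A convenient device is to test the membership \eqref{ekvation_att_visa} by pairing against functions as in Lemma~\ref{aux_lemma}: an element of $\mathcal{U}(\mathfrak{n}_{n+1}^-)$ lies in $\ARBIT$ precisely when it kills $(e)$ for all $f$ in the induced representation with the appropriate weight, which reduces the whole question to a polynomial identity in the $x_i$-coordinates and lets one avoid manipulating the ideal $\ARBIT$ directly. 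I would set the computation up that way and then simply match coefficients, obtaining \eqref{koeff_villkor2}. Since $N+1$ coefficients $b_0,\dots,b_N$ are constrained by the $N$ relations \eqref{koeff_villkor2}, the solution space is one-dimensional, recovering also the uniqueness built into Theorems~\ref{I-1} and~\ref{I-2}.
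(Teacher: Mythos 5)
Your overall plan of repeating the computation from Theorem~\ref{alg_core} with the exponent of $T_n$ shifted to $2N-2j+1$ is sound, and the combinatorics of your Leibniz expansion of $[X,(T_n)^m]$ are correct: a first-order piece $m(T_n)^{m-1}[X,T_n]$, a second-order piece $\binom{m}{2}(T_n)^{m-2}[[X,T_n],T_n]$, and termination because $\mathfrak{n}_{n+1}^-$ is abelian. Taking $X=Y_1^+$, the first piece lands in $\mathcal{U}(\mathfrak{n}_{n+1}^-)\mathfrak{m}_{n+1}$ since $[Y_1^+,T_n]=2M_{1,n-2}$, while the second piece contributes $m(m-1)\,Y_1^-(\ADSL)^j(T_n)^{m-2}$ with $m(m-1)=2(N-j)(2N-2j+1)$, which after the index shift gives exactly the factor $(N-j+1)(2N-2j+3)$ appearing in \eqref{koeff_villkor2}.

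There is, however, a genuine error in your account of where the \emph{other} factor comes from. You claim that ``the terms where $X$ hits a factor of $\LAP_{n-1}$ contribute to the ideal $\ARBIT$ and may be discarded,'' and earlier that $\LAP_{n-1}$ is ``annihilated modulo the ideal by $\mathfrak{n}_n^+$.'' Both claims are false, and if taken at face value they destroy the argument: it is precisely these terms that produce the coefficient $j(n-3+2\lambda-4N+2j)$. By Lemma~\ref{indlemma},
\begin{equation*}
[Y_1^+,(\ADSL)^j] = 2j(n-1-2j)\,Y_1^-(\ADSL)^{j-1}+4j\,Y_1^-(\ADSL)^{j-1}H + v,\qquad v\in\mathcal{U}(\mathfrak{n}_n^-)\otimes\mathfrak{m}_n,
\end{equation*}
and only $v$ lies in the ideal. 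Commuting $H$ past $(T_n)^{2N-2j+1}$ replaces $H$ by $\lambda-(2N-2j+1)$ modulo $\ARBIT$, and this is exactly how the linear coefficient $2j(n-3+2\lambda-4N+2j)$ is generated. The $T_n$-brackets cannot supply any $\lambda$: $[Y_1^+,T_n]=2M_{1,n-2}$ contains no $H$ at all (because $T_n=Y_{n-2}^-$ with $n-2\neq 1$), so the ``$H$-eigenvalue bookkeeping'' you invoke in the $T_n$-part does not exist there. Likewise, your parenthetical about ``$[[X,T_n],T_n]$ producing a $\LAP$-type term, after averaging over the relevant root vectors $X$'' is off: $[[Y_1^+,T_n],T_n]=2Y_1^-$ is a single root vector, not a d'Alembertian, and no averaging takes place --- one verifies the condition for the single element $Y_1^+$ and then propagates it to all of $\mathfrak{n}_n^+$ by $\Ad(M_0^n)$-conjugation via \eqref{ADS_komm}, as in the final step of the proof of Theorem~\ref{alg_core}. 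Once you correct the attribution of the two factors (the $\lambda$-dependent one from $[Y_1^+,(\ADSL)^j]$, the purely combinatorial one from $[[Y_1^+,T_n],T_n]$) and record the odd-power analogue of Lemma~\ref{juhl_lemma}, your plan does lead to \eqref{koeff_villkor2} by the same collection-of-monomials argument the paper uses in the even case.
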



Let $\mathcal{D}_{2N}^0(\lambda) \in \mathcal{U}(\mathfrak{n}_{n+1}^{-})$ 
be the unique element satisfying
\eqref{koeff_villkor} and $a_{N}(\lambda)=1$. Let
$\mathcal{D}_{2N+1}^0(\lambda) \in
\mathcal{U}(\mathfrak{n}_{n+1}^{-})$ be the unique element satisfying
\eqref{koeff_villkor2} and $b_{N}(\lambda)=1$.  We have the following
algebraic characterization of the family $\mathcal{D}_{N}^0(\lambda)$.

\begin{theorem}\label{thm_unikhet} Let $N \geq 0, \mathcal{E} \in 
\mathcal{U}(\mathfrak{n}^{-}_{n+1})$ and let $\lambda \in \mathbb{C}$.
The three conditions
\begin{align}
[X,\mathcal{E}] & = 0 \; \text{for all} \; X \in \mathfrak{m}_n,
\label{villkorA} \\ [X, \mathcal{E}] &\in \ARBIT \; \text{for all} \;
X \in \mathfrak{n}_n^+,
\label{villkorB} \\
[H, \mathcal{E}] &= -N \mathcal{E}, \label{villkorC}
\end{align}
are satisfied if and only if $\mathcal{E} = c
\mathcal{D}_{N}^{0}(\lambda)$ for some $c \in \mathbb{C}$.
\end{theorem}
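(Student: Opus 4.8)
The plan is to prove the two implications separately: the converse (``if'') direction by reading Theorems~\ref{alg_core} and~\ref{alg_core2} backwards, and the forward (``only if'') direction by using the three conditions in the order \eqref{villkorC}, then \eqref{villkorA}, then \eqref{villkorB}. I first record the structural facts I will lean on. Since $\mathfrak{n}_{n+1}^-$ is abelian with basis $Y_1^-,\dots,Y_{n-2}^-,Q_1^-,Q_2^-$ and $\mathfrak{n}_n^-$ is spanned by the same list with $T_n=Y_{n-2}^-$ removed (see \eqref{NBASIS}), the algebra $\mathcal{U}(\mathfrak{n}_{n+1}^-)$ is the polynomial ring $\mathcal{U}(\mathfrak{n}_n^-)[T_n]$; in particular it is free over $\mathcal{U}(\mathfrak{n}_n^-)$ with basis $\{(T_n)^k\}_{k\ge 0}$. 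Next, $\mathrm{ad}(H)$ scales each spanning vector of $\mathfrak{n}_{n+1}^-$ by $-1$ (they lie in $\mathrm{ad}(H)$-eigenspaces for negative roots), hence acts as $-\deg$ on homogeneous elements; so \eqref{villkorC} holds for $\mathcal{E}$ exactly when $\mathcal{E}$ is homogeneous of $\mathfrak{n}_{n+1}^-$-degree $N$. Finally, $T_n$ centralizes $\mathfrak{m}_n$: using the brackets of Section~\ref{prel} one has $[M_{ij},Y_{n-2}^-]=0$ for $i,j\le n-3$, $[Z_j^\pm,Y_{n-2}^-]=0$ for $j\le n-3$, and $[H_2,Y_{n-2}^-]=-f_1(H_2)Y_{n-2}^-=0$ since $Y_{n-2}^-\in\mathfrak{g}_{-f_1}$ and $f_1(H_2)=0$; these elements generate $\mathfrak{m}_n$.

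\emph{The ``if'' direction.} By \eqref{alg_core_form}, $\mathcal{D}_N^0(\lambda)$ is a linear combination of monomials $(\Delta_{n-1})^j(T_n)^{N-2j}$, each homogeneous of degree $N$, so \eqref{villkorC} holds. Each such monomial commutes with $\mathfrak{m}_n$, because $\Delta_{n-1}$ does (the line after \eqref{LAP}) and $T_n$ centralizes $\mathfrak{m}_n$; hence \eqref{villkorA} holds. Condition \eqref{villkorB} is exactly Theorem~\ref{alg_core} (for even $N$) or Theorem~\ref{alg_core2} (for odd $N$) applied to $\mathcal{D}_N^0(\lambda)$, whose coefficients satisfy \eqref{koeff_villkor} resp.\ \eqref{koeff_villkor2} by definition. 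All three conditions are linear in the element, so $c\mathcal{D}_N^0(\lambda)$ satisfies them too.

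\emph{The ``only if'' direction.} Assume $\mathcal{E}$ satisfies \eqref{villkorA}--\eqref{villkorC}. By the remark above, \eqref{villkorC} makes $\mathcal{E}$ homogeneous of degree $N$; write $\mathcal{E}=\sum_k\mathcal{E}_k(T_n)^k$ with $\mathcal{E}_k\in\mathcal{U}(\mathfrak{n}_n^-)$ homogeneous of degree $N-k$. For $X\in\mathfrak{m}_n$ we have $[X,(T_n)^k]=0$ and $[X,\mathcal{E}_k]\in\mathcal{U}(\mathfrak{n}_n^-)$ (by \eqref{bracket_rel}, $\mathrm{ad}(X)$ preserves $\mathfrak{n}_n^-$, hence $\mathcal{U}(\mathfrak{n}_n^-)$), so $0=[X,\mathcal{E}]=\sum_k[X,\mathcal{E}_k](T_n)^k$ and freeness forces $[X,\mathcal{E}_k]=0$ for each $k$. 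Proposition~\ref{kommprop} then gives that $\mathcal{E}_k$ is a polynomial in $\Delta_{n-1}$; homogeneity of degree $N-k$ forces $\mathcal{E}_k=0$ when $N-k$ is odd and $\mathcal{E}_k=c_{(N-k)/2}(\Delta_{n-1})^{(N-k)/2}$ when $N-k$ is even. Hence $\mathcal{E}=\sum_j c_j(\Delta_{n-1})^j(T_n)^{N-2j}$, precisely the shape treated in Theorem~\ref{alg_core} ($N$ even) or Theorem~\ref{alg_core2} ($N$ odd). Now \eqref{villkorB} forces the $c_j$ to obey the recursion \eqref{koeff_villkor} resp.\ \eqref{koeff_villkor2}; since the coefficient multiplying $c_{j-1}$ there is a product of positive integers in the relevant range of $j$, hence never zero, the recursion determines each $c_{j-1}$ from $c_j$, the solution space is one-dimensional, and $\mathcal{D}_N^0(\lambda)$ spans it. Therefore $\mathcal{E}=c\,\mathcal{D}_N^0(\lambda)$, with $c$ the top coefficient of $\mathcal{E}$.

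\emph{Where the work is.} Given Proposition~\ref{kommprop} and Theorems~\ref{alg_core}--\ref{alg_core2}, the argument is essentially formal; the one genuine (if routine) point is the passage from \eqref{villkorA} to Proposition~\ref{kommprop}, which rests on $T_n$ centralizing $\mathfrak{m}_n$ and on $\mathcal{U}(\mathfrak{n}_{n+1}^-)$ being the polynomial extension $\mathcal{U}(\mathfrak{n}_n^-)[T_n]$. The only place to slip is the index bookkeeping: the generators $M_{ij},Z_j^\pm$ of $\mathfrak{m}_n\subset\mathfrak{g}_{n+1}$ run over $i,j\le n-3$, while the new root vector $T_n=Y_{n-2}^-$ and the larger $\mathfrak{m}_{n+1}$ involve index $n-2$, and it is exactly this gap that makes the relevant structure constants vanish.
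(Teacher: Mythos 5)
Your proof is correct and follows essentially the same route as the paper's proof of Theorem~\ref{thm_unikhet} in Section~\ref{algebra}: use \eqref{villkorC} to force homogeneity of degree $N$, expand $\mathcal{E}$ over powers of $T_n$, apply Proposition~\ref{kommprop} componentwise to reduce to the ansatz of Theorems~\ref{alg_core} and~\ref{alg_core2}, and then let condition \eqref{villkorB} pin down the coefficients up to a scalar. The only cosmetic differences are that the paper writes out only the even-$N$ case and cites Lemma~\ref{M_bracket} for $[\mathfrak{m}_n,T_n]=0$ rather than rederiving it.
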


The analogs of Theorem \ref{alg_core} and Theorem \ref{alg_core2}
for $\mathfrak{o}(1,n)$ have been proved in \cite{juhl_conform}. Our
proof for $\mathfrak{o}(2,n-1)$ is similar but incorporates some
simplifications (as a proof of Lemma \ref{indlemma} using an induction
argument). We now prove Theorem \ref{alg_core} and Theorem
\ref{thm_unikhet}.  We omit the proof of Theorem \ref{alg_core2};
it is similar to the proof of the corresponding result for
$\mathfrak{o}(1,n)$ in \cite{juhl_conform}.

We formulate some lemmas which are used in the proof of Theorem \ref{alg_core}.

\begin{lemma}\label{H_lemma}\label{M_bracket}
We have
\begin{align}
[\mathfrak{m}_n, \LAP_{n-1}] & = 0, \label{mkomm1} \\
 [\mathfrak{n}_{n}^{-}, \LAP_{n-1} ] &= 0, \label{mkomm1b} \\
[\mathfrak{m}_n, T_n ] &= 0, \label{mkomm2} \\
[ H, (\LAP_{n-1})^k ] & = -2k (\LAP_{n-1})^k, \label{akomm} \\
[H, (T_n)^{2k} ] &= -2k (T_n)^{2k}.
\label{H1YB} 
\end{align}
\end{lemma}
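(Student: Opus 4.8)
The plan is to verify the five identities one at a time; each reduces to a short computation with the restricted root space structure of $\mathfrak{g}_n \subset \mathfrak{g}_{n+1}$ and the fact (see \eqref{bracket_rel}) that $\mathfrak{n}^-$ is abelian. Identity \eqref{mkomm1b} is then immediate: $\LAP_{n-1}$ is a polynomial in the pairwise commuting generators $Q_1^-, Q_2^-, Y_1^-, \dots, Y_{n-3}^-$ of $\mathfrak{n}_n^-$, so $\operatorname{ad}(X)$ annihilates it for every $X \in \mathfrak{n}_n^-$. For \eqref{mkomm1} I would reuse the $\Ad$-analysis carried out in the proof of Proposition \ref{kommprop}: under the injective, multiplicative map $P \mapsto P^{*}$ of \eqref{PSTAR}--\eqref{nabel} the element $\LAP_{n-1}$ corresponds to the quadratic form $X \mapsto \langle X, X\rangle$ on $\mathfrak{n}_n^-$ (this is \eqref{ICKEDEG} on the diagonal). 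By \eqref{rot_for} and the $\Ad(M_0^n)$-invariance of that form (item (2) there), $(\Ad(m)\LAP_{n-1})^{*} = \LAP_{n-1}^{*}$, hence $\Ad(m)\LAP_{n-1} = \LAP_{n-1}$ for all $m \in M_0^n$; differentiating at the identity along $\mathfrak{m}_n = \operatorname{Lie}(M_0^n)$ gives $[\mathfrak{m}_n, \LAP_{n-1}] = 0$.

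The remaining three identities involve $T_n = Y_{n-2}^- \in \mathfrak{g}^{n+1}_{-f_1}$ and the element $H = H_1$. For \eqref{mkomm2} it suffices to bracket $Y_{n-2}^-$ against a spanning set of $\mathfrak{m}_n$, namely $H_2$, the $Z_j^{\pm}$ with $1 \le j \le n-3$, and the $M_{ij}$ with $1 \le i,j \le n-3$: one gets $[H_2, Y_{n-2}^-] = -f_1(H_2)\,Y_{n-2}^- = 0$; $[Z_k^{+}, Y_{n-2}^-] = -\delta_{k,\,n-2}\,W_3 = 0$ and $[Z_k^{-}, Y_{n-2}^-] = -\delta_{k,\,n-2}\,W_4 = 0$ for $k \le n-3$ by the relations recorded just before \eqref{bracket_rel}; and $[M_{ij}, Y_{n-2}^-] = 0$ because $\operatorname{ad}(M_{ij})$ preserves $\mathfrak{g}^{n+1}_{-f_1} = \operatorname{span}(Y_1^-, \dots, Y_{n-2}^-)$ and, by \eqref{ADBER}, rotates only the pair $Y_i^-, Y_j^-$, fixing every other $Y_k^-$. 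For \eqref{akomm} and \eqref{H1YB} I would use that $f_1(H) = 1$ and $f_2(H) = 0$, so $\alpha(H) = -1$ for each root $\alpha \in \{-f_1,\, -f_1 \pm f_2\}$ appearing in $\mathfrak{n}^-$; hence $[H, X] = -X$ for every $X \in \mathfrak{n}_n^-$, in particular $[H, \LAP_{n-1}] = -2\LAP_{n-1}$, while also $[H, T_n] = -T_n$. Since $\operatorname{ad}(H)$ is a derivation of $\mathcal{U}(\mathfrak{g}_{n+1})$ and $\operatorname{ad}(H)\LAP_{n-1}$, resp. $\operatorname{ad}(H)T_n$, commutes with $\LAP_{n-1}$, resp. $T_n$, the Leibniz rule collapses to $\operatorname{ad}(H)\big((\LAP_{n-1})^k\big) = k\,(\operatorname{ad}(H)\LAP_{n-1})(\LAP_{n-1})^{k-1} = -2k(\LAP_{n-1})^k$ and, likewise, $\operatorname{ad}(H)\big((T_n)^{2k}\big) = 2k\,(\operatorname{ad}(H)T_n)(T_n)^{2k-1} = -2k(T_n)^{2k}$.

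None of the five steps poses a genuine obstacle; the one point that needs attention is the indexing in \eqref{mkomm2}: one must keep in mind that $\mathfrak{m}_n$ contains the $M_{ij}$ and $Z_j^{\pm}$ only for indices at most $n-3$, which is precisely why they commute with the new generator $T_n = Y_{n-2}^-$ of $\mathfrak{n}^-_{n+1}$. That structural feature is exactly the input needed for the commutator computation \eqref{ekvation_att_visa} in the next section, which is why the present lemma is singled out here.
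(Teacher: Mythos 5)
Your proposal is correct and takes essentially the same route as the paper, which disposes of \eqref{mkomm1} by citing Proposition~\ref{kommprop}, of \eqref{mkomm1b} by the abelianity of $\mathfrak{n}_n^-$, and of the remaining three by ``a simple computation''; your write-up just supplies the computations (the $\Ad(M_0^n)$-invariance of the form $\langle\,\cdot\,,\cdot\,\rangle$ via \eqref{rot_for} and \eqref{PSTAR}--\eqref{nabel} for \eqref{mkomm1}, the index bound $\le n-3$ on the $Z_j^\pm$ and $M_{ij}$ for \eqref{mkomm2}, and $[H,X]=-X$ on $\mathfrak{n}^-$ together with the Leibniz rule for \eqref{akomm} and \eqref{H1YB}) that the paper leaves implicit.
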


\begin{proof} \eqref{mkomm1} follows from Proposition \ref{kommprop},
\eqref{mkomm1b} follows because $\mathfrak{n}_{n}^-$ is abelian. The
other statements follow from a simple computation.
\end{proof}

\begin{lemma}\label{formlemma} For any $X \in \mathfrak{n}_n^+$ and 
for any $F \in \mathcal{U}(\mathfrak{n}_{n+1}^{-})$ we have that
\begin{equation}
[X,F] \in (\mathcal{U}(\mathfrak{n}_{n+1}^-)  \mathfrak{m}_{n+1}) 
\oplus (\mathcal{U}(\mathfrak{n}_{n+1}^-)  \mathfrak{a})
\oplus \mathcal{U}(\mathfrak{n}_{n+1}^-). 
\label{stat}
\end{equation}
\end{lemma}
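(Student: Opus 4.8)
The plan is to prove the statement by induction on the length (degree) of the monomial $F \in \mathcal{U}(\mathfrak{n}_{n+1}^-)$, using the basis $\{I_1, \dots, I_n\}$ of $\mathfrak{n}_{n+1}^-$ from \eqref{NBASIS} together with the Poincar\'e--Birkhoff--Witt theorem for the decomposition $\mathfrak{g}_{n+1} = \mathfrak{n}_{n+1}^- \oplus \mathfrak{m}_{n+1} \oplus \mathfrak{a} \oplus \mathfrak{n}_{n+1}^+$. By linearity it suffices to treat $F = I_{i_1} \cdots I_{i_k}$ a monomial. First I would record the base case: for $F$ of degree $0$ (a scalar) the bracket $[X,F]$ vanishes, and for $F = I_j \in \mathfrak{n}_{n+1}^-$ of degree $1$, the commutator $[X, I_j] \in [\mathfrak{n}_{n+1}^+, \mathfrak{n}_{n+1}^-] \subset \mathfrak{g}_{n+1}$, and one checks from the bracket relations that this lands in $\mathfrak{m}_{n+1} \oplus \mathfrak{a} \oplus \mathfrak{n}_{n+1}^+$; the $\mathfrak{n}_{n+1}^+$-part is harmless since it can be commuted back in at the cost of lower-degree terms, but more directly it is simply an element of $\mathcal{U}(\mathfrak{n}_{n+1}^-)\mathfrak{m}_{n+1} \oplus \mathcal{U}(\mathfrak{n}_{n+1}^-)\mathfrak{a}$ plus a scalar once we absorb it — actually I should be careful: $\mathfrak{n}_{n+1}^+$ terms are not in the right-hand side of \eqref{stat}, so the key observation is that $[\mathfrak{n}_n^+, \mathfrak{n}_{n+1}^-]$ has no component along the full $\mathfrak{n}_{n+1}^+$ except possibly the $T_n$-direction or similar, and one uses $\mathfrak{n}_n^+ \subset \mathfrak{g}_{n+1}$ acting so that brackets with $\mathfrak{n}_{n+1}^-$ stay within $\mathfrak{n}_{n+1}^- \oplus \mathfrak{m}_{n+1} \oplus \mathfrak{a}$.

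Next, for the inductive step, write $F = I_{i_1} G$ with $G$ of degree $k-1$ and apply the Leibniz rule $[X, I_{i_1} G] = [X, I_{i_1}] G + I_{i_1} [X, G]$. By the induction hypothesis, $[X,G]$ lies in $\mathcal{U}(\mathfrak{n}_{n+1}^-)\mathfrak{m}_{n+1} \oplus \mathcal{U}(\mathfrak{n}_{n+1}^-)\mathfrak{a} \oplus \mathcal{U}(\mathfrak{n}_{n+1}^-)$, so $I_{i_1}[X,G]$ lies in the same space (multiplication on the left by $I_{i_1} \in \mathfrak{n}_{n+1}^-$ preserves each summand since $\mathcal{U}(\mathfrak{n}_{n+1}^-)$ is a left ideal in itself). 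For the term $[X, I_{i_1}] G$: the element $[X, I_{i_1}]$ is a fixed element of $\mathfrak{g}_{n+1} = \mathfrak{n}_{n+1}^- \oplus \mathfrak{m}_{n+1} \oplus \mathfrak{a} \oplus \mathfrak{n}_{n+1}^+$, so I would decompose it accordingly. The $\mathfrak{n}_{n+1}^-$-component times $G$ stays in $\mathcal{U}(\mathfrak{n}_{n+1}^-)$; the $\mathfrak{a}$-component $cH$ gives $cHG = cGH + c[H,G]$, and $[H,G] \in \mathcal{U}(\mathfrak{n}_{n+1}^-)$ since $\mathrm{ad}(H)$ preserves $\mathfrak{n}_{n+1}^-$, so this contributes to $\mathcal{U}(\mathfrak{n}_{n+1}^-)\mathfrak{a} \oplus \mathcal{U}(\mathfrak{n}_{n+1}^-)$; similarly the $\mathfrak{m}_{n+1}$-component $Z$ gives $ZG = GZ + [Z,G]$ with $[Z,G] \in \mathcal{U}(\mathfrak{n}_{n+1}^-)$ by \eqref{bracket_rel}, landing in $\mathcal{U}(\mathfrak{n}_{n+1}^-)\mathfrak{m}_{n+1} \oplus \mathcal{U}(\mathfrak{n}_{n+1}^-)$.

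The remaining and genuinely delicate piece is the $\mathfrak{n}_{n+1}^+$-component $V \in \mathfrak{n}_{n+1}^+$ of $[X, I_{i_1}]$, which produces the term $VG$ with $V$ on the \emph{wrong} side. Here I would argue that in fact $[X, I_{i_1}] = [X,I_{i_1}]$ for $X \in \mathfrak{n}_n^+$ and $I_{i_1} \in \mathfrak{n}_{n+1}^-$ has no $\mathfrak{n}_{n+1}^+$-component at all — this is where the specific structure of the embedding $\mathfrak{g}_n \subset \mathfrak{g}_{n+1}$ and the choice of bases $\mathfrak{n}_n^+$, $\mathfrak{n}_{n+1}^-$ matters: the bracket of a positive root vector in $\mathfrak{g}_n$ with a negative root vector in $\mathfrak{g}_{n+1}$ lands in the nonpositive part. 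Concretely, $[\mathfrak{n}_n^+, \mathfrak{n}_{n+1}^-] \subseteq \mathfrak{n}_{n+1}^- \oplus \mathfrak{m}_{n+1} \oplus \mathfrak{a}$, which one verifies directly from the root-space structure: root vectors from $\Sigma^+$ for $\mathfrak{g}_n$, bracketed against root vectors for $-\Sigma^+$ of $\mathfrak{g}_{n+1}$, cannot produce a strictly positive $\mathrm{ad}(H)$-eigenvalue in $\mathfrak{g}_{n+1}$ because the $\mathfrak{g}_n$-roots, viewed in $\mathfrak{g}_{n+1}$, have $H$-eigenvalue $\le 1$ and the $\mathfrak{n}_{n+1}^-$ generators have $H$-eigenvalue $-1$, forcing the result to have $H$-eigenvalue $\le 0$. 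The main obstacle is thus nailing down this eigenvalue/root-combinatorial fact cleanly — checking that no $\mathfrak{n}_{n+1}^+$ component can appear — and I would do this by a careful case analysis over $X$ ranging through a basis of $\mathfrak{n}_n^+$ (namely $\mathfrak{g}_{f_1}, \mathfrak{g}_{f_1+f_2}, \mathfrak{g}_{f_1-f_2}$ of $\mathfrak{g}_{n+1}$) and $I_{i_1}$ ranging through \eqref{NBASIS}, invoking the explicit commutator relations collected in the appendix. Once that component is shown to vanish, the induction closes immediately.
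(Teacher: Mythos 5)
Your proof is correct and takes essentially the same approach as the paper: expand $F$ into monomials, apply the Leibniz rule, use the inclusion $[\mathfrak{n}_{n+1}^+, \mathfrak{n}_{n+1}^-] \subset \mathfrak{m}_{n+1} \oplus \mathfrak{a}$, and then push the single non-$\mathfrak{n}^-$ factor to the right via \eqref{bracket_rel}. The one thing to tighten is your hedged treatment of the ``genuinely delicate piece'': the vanishing of any $\mathfrak{n}_{n+1}^+$-component of $[X,I_{i_1}]$ is immediate from the $\mathrm{ad}(H)$-grading --- $i(\mathfrak{n}_n^+)$ sits in the $(+1)$-eigenspace $\mathfrak{n}_{n+1}^+$ of $\mathrm{ad}(H)$ in $\mathfrak{g}_{n+1}$ and $\mathfrak{n}_{n+1}^-$ is the $(-1)$-eigenspace, so the bracket has $\mathrm{ad}(H)$-eigenvalue $0$ and lies in $\mathfrak{m}_{n+1} \oplus \mathfrak{a}$ --- with no case analysis over the commutator tables needed, and in fact no $\mathfrak{n}_{n+1}^-$-component either, which also shortens your inductive step.
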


\begin{proof} Any $F \in \mathcal{U}(\mathfrak{n}_{n+1}^{-})$ can be written
as a linear combination of terms of the form
\begin{equation*}
(I_{1} )^{a_1}  \dots (I_{n} )^{a_{n}},
\end{equation*}
where $a_{1}, \dots, a_{n}$ are natural numbers and $I_1, \dots, I_n$ is a
basis for $\mathfrak{n}_{n+1}^-$; see \eqref{NBASIS}. Using 
$$ 
[A,B_1 \dots B_n] = [A,B_1] B_2 \dots B_n + \dots + B_1 \dots
B_{n-1} [A, B_n]
$$ and $[ \mathfrak{n}_{n+1}^{+}, \mathfrak{n}_{n+1}^- ] \subset
\mathfrak{m}_{n+1} \oplus \mathfrak{a}$, we conclude that $[X,F]$ can
be written as a linear combination of terms which are monomials with
factors from $\mathfrak{n}_{n+1}^-$ and at most one factor from
$\mathfrak{m}_{n+1} \oplus \mathfrak{a}_{n+1}$. Using
$[\mathfrak{m}_{n+1}, \mathfrak{n}^{-}_{n+1} ] \subset
\mathfrak{n}^{-}_{n+1}$ and $[\mathfrak{a}, \mathfrak{n}^{-}_{n+1} ]
\subset \mathfrak{n}^{-}_{n+1}$, the statement \eqref{stat} follows.
\end{proof}

\begin{lemma}\label{juhl_lemma}
\begin{equation*}
[ Y_1^+, (Y_1^-)^{2N} ] = -2N(2N-1)(Y_1^-)^{2N-1} +
4N(Y_{1}^-)^{2N-1} H.
\end{equation*}
For $2 \leq r \leq n$ we have
\begin{equation*}
[ Y_1^+, (Y_r^-)^{2N} ] = 2N(2N-1) Y_{1}^- 
(Y_{r}^-)^{2N-2} + 4N(Y_r^-)^{2N-1} M_{1r}. \end{equation*}
\end{lemma}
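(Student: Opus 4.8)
The plan is to reduce both identities to a handful of elementary commutator facts in $\mathfrak{g}_{n+1}$ together with the derivation property of $\mathrm{ad}(Y_1^+)$; once these are in place the rest is the summation of an arithmetic progression. First I would record the base relations that are needed, all of them read off directly from the explicit matrix realizations of the root vectors (equivalently, from the commutator table in the appendix): $[Y_1^+,Y_1^-]=2H$ and $[Y_1^+,Y_r^-]=2M_{1r}$ for $r\neq 1$, together with $[H,Y_j^-]=-Y_j^-$ and $[M_{1r},Y_r^-]=Y_1^-$, and finally $[Y_1^-,Y_r^-]=0$, which is the abelianness of $\mathfrak{n}^-$ already recorded in \eqref{bracket_rel}. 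From $[H,Y_j^-]=-Y_j^-$ and the Leibniz rule one gets $[H,(Y_j^-)^k]=-k(Y_j^-)^k$ (the same type of computation as for \eqref{akomm}, \eqref{H1YB}), and likewise $[M_{1r},(Y_r^-)^k]=k\,Y_1^-(Y_r^-)^{k-1}$, now using in addition $[Y_1^-,Y_r^-]=0$; these two auxiliary formulas are all that the main computation will invoke.

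For the first identity I would write
\begin{equation*}
[Y_1^+,(Y_1^-)^{2N}]=\sum_{i=0}^{2N-1}(Y_1^-)^i[Y_1^+,Y_1^-](Y_1^-)^{2N-1-i}=2\sum_{i=0}^{2N-1}(Y_1^-)^iH(Y_1^-)^{2N-1-i},
\end{equation*}
move each $H$ to the right using $(Y_1^-)^iH=H(Y_1^-)^i+i(Y_1^-)^i$, sum $\sum_{i=0}^{2N-1}i=N(2N-1)$, and finally normal-order the leftover $H$ via $H(Y_1^-)^{2N-1}=(Y_1^-)^{2N-1}H-(2N-1)(Y_1^-)^{2N-1}$; collecting the pieces yields exactly $4N(Y_1^-)^{2N-1}H-2N(2N-1)(Y_1^-)^{2N-1}$. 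For the second identity the same expansion gives $[Y_1^+,(Y_r^-)^{2N}]=2\sum_{i=0}^{2N-1}(Y_r^-)^iM_{1r}(Y_r^-)^{2N-1-i}$; commuting $M_{1r}$ rightward by $(Y_r^-)^iM_{1r}=M_{1r}(Y_r^-)^i-i\,Y_1^-(Y_r^-)^{i-1}$, summing, and then re-ordering $M_{1r}(Y_r^-)^{2N-1}=(Y_r^-)^{2N-1}M_{1r}+(2N-1)Y_1^-(Y_r^-)^{2N-2}$ produces the two-term answer $4N(Y_r^-)^{2N-1}M_{1r}+2N(2N-1)Y_1^-(Y_r^-)^{2N-2}$. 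An equivalent route is induction on $N$, checking $N\le 1$ by hand and using $(Y_j^-)^{2N+2}=(Y_j^-)^2(Y_j^-)^{2N}$ in the inductive step; I find the direct summation cleaner and less error-prone.

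There is no genuine obstacle here: the computation is completely mechanical, and the only thing to be careful about is the bookkeeping of operator orderings, since $\mathrm{ad}(Y_1^+)$ does not preserve the normal-ordered form and every re-ordering of $H$ (resp.\ $M_{1r}$) past a power of $Y_1^-$ (resp.\ $Y_r^-$) contributes a lower-order correction that must be tracked. The precise numerical coefficients $4N$ and $\mp 2N(2N-1)$ in the statement are exactly what the normalizations $[Y_1^+,Y_1^-]=2H$, $[M_{1r},Y_r^-]=Y_1^-$ and $[H,Y_j^-]=-Y_j^-$ feed into, so keeping those normalizations straight is where any slip would occur.
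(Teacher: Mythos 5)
Your proof is correct, and since the paper itself defers to \cite{juhl_conform} rather than giving an argument, your direct computation is a genuinely self-contained alternative. Let me record what I checked. You use exactly the facts available from the appendix: $[Y_1^+,Y_1^-]=2H$ (since $M_{11}=0$), $[Y_1^+,Y_r^-]=2M_{1r}$ for $r\neq 1$, $[H,Y_j^-]=-Y_j^-$, $[M_{1r},Y_r^-]=Y_1^-$, and the abelianness of $\mathfrak{n}^-$. The derivation-property expansion
$[Y_1^+,(Y_j^-)^{2N}]=\sum_{i=0}^{2N-1}(Y_j^-)^i\,[Y_1^+,Y_j^-]\,(Y_j^-)^{2N-1-i}$
is the right starting point; the reordering identities $(Y_1^-)^iH=H(Y_1^-)^i+i(Y_1^-)^i$ and $(Y_r^-)^iM_{1r}=M_{1r}(Y_r^-)^i-i\,Y_1^-(Y_r^-)^{i-1}$ follow from Leibniz together with $[H,Y_1^-]=-Y_1^-$, $[M_{1r},Y_r^-]=Y_1^-$, and (for the second) $[Y_1^-,Y_r^-]=0$. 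Summing gives $2N\cdot H(Y_1^-)^{2N-1}+N(2N-1)(Y_1^-)^{2N-1}$ in the first case, and normal-ordering the leftover $H$ produces $2N(Y_1^-)^{2N-1}H-N(2N-1)(Y_1^-)^{2N-1}$, i.e.\ after multiplying by $2$ exactly $4N(Y_1^-)^{2N-1}H-2N(2N-1)(Y_1^-)^{2N-1}$. The second identity goes through identically and yields $4N(Y_r^-)^{2N-1}M_{1r}+2N(2N-1)Y_1^-(Y_r^-)^{2N-2}$; note the sign change relative to the first formula is forced by the lack of a compensating lower-order term when $M_{1r}$ is pushed past the $(Y_r^-)^{2N-1-i}$ on the right (there is one, but it only appears in the final normal-ordering step, not in the intermediate sum). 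Everything matches the statement, so the argument is complete. Your alternative suggestion of induction on $N$ would also work and is presumably closer to the style of the cited reference, but the direct summation is shorter; either way the only delicate point, as you observe, is tracking the commutator corrections that appear each time $H$ or $M_{1r}$ crosses a power of $Y_j^-$, and you have done that correctly.
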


\begin{proof} See \cite{juhl_conform}. \end{proof}

\begin{lemma}\label{indlemma_HJALP}
$$ 
[Y_1^+, \ADSL] = (2n-6) Y_{1}^- + 4 Y_{1}^- H + v,
$$
where  $v \in \mathfrak{n}_n^- \otimes \mathfrak{m}_n$.
\end{lemma}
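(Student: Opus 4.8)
The plan is to compute the bracket $[Y_1^+, \ADSL]$ directly from the definition of $\ADSL = \LAP_{n-1} = -(Q_1^-)^2 + (Q_2^-)^2 + \sum_{j=1}^{n-3}(Y_j^-)^2$, using the derivation property of the bracket termwise. First I would handle the $\sum_{j=1}^{n-3}(Y_j^-)^2$ part by invoking Lemma \ref{juhl_lemma} with $N=1$: for the $j=1$ term this gives $[Y_1^+,(Y_1^-)^2] = -2(Y_1^-) + 4 Y_1^- H$, and for each $j$ with $2 \le j \le n-3$ (which corresponds to indices $r$ with $2 \le r \le n-3$ in that lemma) it gives $[Y_1^+,(Y_j^-)^2] = 2 Y_1^-(Y_j^-)^0 \cdot 0 + \ldots$ — more precisely $[Y_1^+,(Y_j^-)^2] = 2 Y_1^- + 4 Y_j^- M_{1j}$, so summing over $j=1,\dots,n-3$ the leading scalar multiples of $Y_1^-$ contribute $-2 + 2(n-4) = 2n-10$, plus the $4Y_1^- H$ term from $j=1$, plus the terms $4 Y_j^- M_{1j}$ which lie in $\mathfrak{n}_n^- \otimes \mathfrak{m}_n$.

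Next I would compute $[Y_1^+, (Q_2^-)^2]$ and $[Y_1^+, (Q_1^-)^2]$. Since $Q_1^-, Q_2^-$ are built from the $W_i$'s, which lie in the root spaces $\mathfrak{g}_{-f_1 \pm f_2}$, the bracket $[Y_1^+, Q_i^-]$ can be read off from the commutator relations in the appendix (or derived from the matrix expressions for $Y_1^+$, $W_1, \dots, W_4$ given in the text). I expect $[Y_1^+, Q_2^-]$ and $[Y_1^+, Q_1^-]$ each to produce a combination of $Z_1^\pm$-type terms (elements of $\mathfrak{m}_n$, since $\mathfrak{g}_{\pm f_2} \subset \mathfrak{m}_n$) together with possibly a multiple of $H$ or $H_2$; applying the derivation rule $[Y_1^+,(Q_i^-)^2] = [Y_1^+,Q_i^-]Q_i^- + Q_i^-[Y_1^+,Q_i^-]$ and keeping track of which factors land in $\mathfrak{n}_n^-$ versus $\mathfrak{m}_n$, these contributions should, after the signs $-(Q_1^-)^2 + (Q_2^-)^2$ are accounted for, combine to shift the scalar coefficient of $Y_1^-$ from $2n-10$ up to $2n-6$ (i.e. contribute an extra $+4$) while all remaining terms are absorbed into $v \in \mathfrak{n}_n^- \otimes \mathfrak{m}_n$ and the $H$-coefficient stays at $4$. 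A useful sanity check here: since $\LAP_{n-1}$ commutes with $\mathfrak{n}_n^-$ by \eqref{mkomm1b} and with $\mathfrak{m}_n$ by \eqref{mkomm1}, only the $H$-dependence and the ``genuinely $\mathfrak{n}_{n+1}^-$'' remainder can survive modulo $\mathfrak{n}_n^- \otimes \mathfrak{m}_n$, which constrains the form of the answer and matches the claimed shape.

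The main obstacle I anticipate is the bookkeeping in the $(Q_1^-)^2$ and $(Q_2^-)^2$ terms: one must carefully use the commutator relations for $\mathfrak{o}(2,n-1)$ to evaluate $[Y_1^+, W_k]$ for the relevant $k$, then reassemble $Q_1^-, Q_2^-$, then expand the squares while sorting each resulting monomial into the three pieces (scalar multiple of $Y_1^-$, multiple of $Y_1^- H$, element of $\mathfrak{n}_n^- \otimes \mathfrak{m}_n$). The risk is sign errors and mis-tracking which basis elements of $\mathfrak{m}_n$ appear, so I would organize the computation by first tabulating all brackets $[Y_1^+, I]$ for $I$ among $\{Q_1^-, Q_2^-, Y_j^-\}$ from the appendix, and only then substitute into the termwise expansion of $[Y_1^+, \ADSL]$. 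The final collection of the scalar coefficient to get exactly $2n-6$, together with the coefficient $4$ on $Y_1^- H$, is then a routine — if tedious — arithmetic check.
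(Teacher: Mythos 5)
Your proposal is correct and follows essentially the same route as the paper: split $\ADSL = -(Q_1^-)^2 + (Q_2^-)^2 + \sum_j (Y_j^-)^2$, treat the $(Y_j^-)^2$ part via Lemma~\ref{juhl_lemma} with $N=1$ to get $(2n-10)Y_1^- + 4Y_1^- H + \sum_{j\ge 2} 4Y_j^- M_{1j}$, and compute $[Y_1^+, -(Q_1^-)^2]$ and $[Y_1^+, (Q_2^-)^2]$ from the appendix brackets, each contributing $2Y_1^-$ plus an $\mathfrak{n}_n^- \otimes \mathfrak{m}_n$ remainder, giving the final coefficient $2n-6$. Your bookkeeping of where the extra $+4$ comes from and what lands in $v$ matches the paper's computation exactly.
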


\begin{proof} We have
\begin{equation*}
\begin{split}
[Y_{1}^+, -(Q_1^-)^2 ] &= 2Q_1^- (Z_1^+ + Z_1^-) + 2Y_1^-, \\
[Y_{1}^+, (Q_2^-)^2 ] &= 2Q_2^- (Z_1^- - Z_1^+) + 2Y_1^-,
\end{split}
\end{equation*}
and hence
$$ 
[Y_1^+, -(Q_1^-)^2 + (Q_2^-)^2] = 4Y_1^- + 2Q_1^- (Z_1^+ + Z_1^-) +
2Q_2^- (Z_1^- - Z_1^+).
$$
Furthermore
$$ 
[Y_1^+, \sum_{j=1}^{n-3} (Y_j^{-})^2 ] = (2n-10)Y_{1}^- + 4Y_1^-
H + \sum_{j=2}^{n-3} 4Y_j^- M_{1 j}.
$$
\end{proof}

\begin{lemma} For $j \geq 1$ we have
\begin{equation}\label{ind_formel}
[Y_{1}^+, (\ADSL)^j ] = 2j(n-1-2j)Y_{1}^- (\ADSL)^{j-1} + 4j Y_{1}^{-}
(\ADSL)^{j-1}H + v,
\end{equation}
where $v \in \mathcal{U}(\mathfrak{n}_n^-) \otimes \mathfrak{m}_n$.
\label{indlemma}
\end{lemma}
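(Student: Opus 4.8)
The plan is to prove \eqref{ind_formel} by induction on $j$. The base case $j=1$ is exactly Lemma~\ref{indlemma_HJALP}, since $2\cdot 1\cdot(n-1-2)=2n-6$ and $4\cdot 1=4$. For the inductive step I would factor $(\ADSL)^{j}=\ADSL\,(\ADSL)^{j-1}$ and apply the Leibniz rule for brackets,
\[
[Y_1^+,(\ADSL)^{j}]=[Y_1^+,\ADSL]\,(\ADSL)^{j-1}+\ADSL\,[Y_1^+,(\ADSL)^{j-1}],
\]
where the first term is controlled by Lemma~\ref{indlemma_HJALP} and the second by the induction hypothesis. What remains is to move the factor $H$, and the $\mathfrak{m}_n$-valued remainder terms, to the right past the surviving powers of $\ADSL$.

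For the first summand, Lemma~\ref{indlemma_HJALP} gives $[Y_1^+,\ADSL]\,(\ADSL)^{j-1}=(2n-6)Y_1^-(\ADSL)^{j-1}+4Y_1^- H(\ADSL)^{j-1}+v_1(\ADSL)^{j-1}$ with $v_1\in\mathfrak{n}_n^-\otimes\mathfrak{m}_n$. The crucial step is to invoke \eqref{akomm} in the form $H(\ADSL)^{j-1}=(\ADSL)^{j-1}H-2(j-1)(\ADSL)^{j-1}$; this creates an extra term $-8(j-1)Y_1^-(\ADSL)^{j-1}$, which is precisely what is needed for the coefficients to come out right. To see that $v_1(\ADSL)^{j-1}$ still lies in $\mathcal{U}(\mathfrak{n}_n^-)\otimes\mathfrak{m}_n$, I would write $v_1$ as a sum of monomials $XZ$ with $X\in\mathfrak{n}_n^-$ and $Z\in\mathfrak{m}_n$, and use \eqref{mkomm1} to commute $Z$ through $(\ADSL)^{j-1}$, so that $XZ(\ADSL)^{j-1}=X(\ADSL)^{j-1}Z$. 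For the second summand, the induction hypothesis together with the fact that $\ADSL$ commutes with every element of $\mathcal{U}(\mathfrak{n}_n^-)$ (by \eqref{mkomm1b}, or simply because $\mathfrak{n}_n^-$ is abelian) rewrites $\ADSL\,[Y_1^+,(\ADSL)^{j-1}]$ as $2(j-1)(n-1-2(j-1))Y_1^-(\ADSL)^{j-1}+4(j-1)Y_1^-(\ADSL)^{j-1}H+\ADSL v_2$, and the same commutation argument shows $\ADSL v_2\in\mathcal{U}(\mathfrak{n}_n^-)\otimes\mathfrak{m}_n$.

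Adding the two contributions, the coefficient of $Y_1^-(\ADSL)^{j-1}H$ is $4+4(j-1)=4j$, and the coefficient of $Y_1^-(\ADSL)^{j-1}$ is $(2n-6)-8(j-1)+2(j-1)(n+1-2j)$, which I expect to collapse to $2j(n-1-2j)$; then $v:=v_1(\ADSL)^{j-1}+\ADSL v_2$ lies in $\mathcal{U}(\mathfrak{n}_n^-)\otimes\mathfrak{m}_n$, completing the induction. I do not anticipate any conceptual obstacle here — the whole argument is commutator bookkeeping — and the one point that really needs care is checking the scalar identity $(2n-6)-8(j-1)+2(j-1)(n+1-2j)=2j(n-1-2j)$: the contribution $2n-6$ from the base case, the contribution $-8(j-1)$ produced by pushing $H$ past $(\ADSL)^{j-1}$, and the contribution $2(j-1)(n-1-2(j-1))$ from the induction hypothesis must combine to exactly this.
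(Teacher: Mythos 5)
Your proposal is correct and is essentially the same induction the paper carries out: base case from Lemma~\ref{indlemma_HJALP}, Leibniz splitting $[Y_1^+,\Delta_{n-1}^{j}]=[Y_1^+,\Delta_{n-1}]\Delta_{n-1}^{j-1}+\Delta_{n-1}[Y_1^+,\Delta_{n-1}^{j-1}]$, commuting $H$ to the right via \eqref{akomm} and the $\mathfrak{m}_n$-factors via \eqref{mkomm1}, and then checking the resulting scalar identity. The arithmetic $(2n-6)-8(j-1)+2(j-1)(n+1-2j)=2j(n-1-2j)$ does indeed hold, so the proof goes through as you expect.
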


\begin{proof}
Lemma \ref{indlemma_HJALP} shows that \eqref{ind_formel} is true for
$j=1$. Assume that \eqref{ind_formel} is valid for $j \leq k$. We want to prove that
\eqref{ind_formel} is valid for $j=k+1$. We get for some $v_1, v_2 \in
\mathcal{U}(\mathfrak{n}_{n}^-) \otimes \mathfrak{m}_n$ that
\begin{equation*}
\begin{split}
& [Y_1^+, (\ADSL)^{k+1} ] = [Y_1^+, \ADSL]
(\ADSL)^{k} + (\ADSL) [Y_1^+, (\ADSL)^k] \\
&= \{ (2n-6) Y_1^- + 4Y_1^- H + v_1 \} (\ADSL)^k \\
& + (\ADSL) \{ 2k(n-1-2k) Y_1^- (\ADSL)^{k-1} +4k
Y_1^- (\ADSL)^{k-1} H + v_2 \}.
\end{split}
\end{equation*}
From \eqref{akomm} we see that
\begin{equation*}
4Y_1^- H (\ADSL)^k = 4Y_1^- (\ADSL)^k (H - 2k).
\end{equation*}
From \eqref{mkomm1} we know that $[\mathfrak{m}_n,\ADSL]=0$ and hence
for all $R \in \mathcal{U}(\mathfrak{n}_n^-)$, $S \in \mathfrak{m}_n$
we have
\begin{equation*}
R S (\ADSL)^k = R (\ADSL)^k S = R^{'} S,
\end{equation*}
where $R' \in \mathcal{U}(\mathfrak{n}_n^-)$. 

We conclude that for some $v_3, v_4 \in \mathcal{U}(\mathfrak{n}_n^-)
\otimes \mathfrak{m}_n$ we have
\begin{equation*}
\begin{split}
[Y_{1}^+, (\ADSL)^{k+1} ] &= (2n-6) Y_{1}^{-} (\ADSL)^k + 4Y_{1}^-
(\ADSL)^k (H - 2k) + v_3 \\
&+ 2k(n-1-2k) Y_{1}^{-} (\ADSL)^{k} + 4k Y_{1}^- (\ADSL)^k H + v_4 \\
&= 2(k+1)(n-1-2(k+1)) Y_{1}^{-} (\ADSL)^k \\
&+ 4(k+1) Y_{1}^- (\ADSL)^{k} H
+ (v_3+v_4).
\end{split}
\end{equation*}
\end{proof}

\begin{proof}[Proof of Theorem \ref{alg_core}] For some $v \in 
\mathcal{U}(\mathfrak{n}_n^-) \otimes \mathfrak{m}_n$ we get using
 $[\mathfrak{m}_n, T_n] = 0$ that
\begin{equation*}
\begin{split}
&Y_{1}^{+} (\ADSL)^j (T_n)^{2k} = [Y_{1}^+, (\ADSL)^j] (T_n)^{2k} +
(\ADSL)^{j} Y_{1}^+ (T_n)^{2k} \\ &= 2j(n-1-2j)Y_{1}^{-} (\ADSL)^{j-1}
(T_n)^{2k} \\ &+4jY_{1}^- (\ADSL)^{j-1} H (T_n)^{2k} + v (T_n)^{2k} +
(\ADSL)^{j} Y_{1}^{+} (T_n)^{2k} \\ & = 2j(n-1-2j)Y_{1}^{-}
(\ADSL)^{j-1} (T_n)^{2k} \\&+ 4jY_{1}^- (\ADSL)^{j-1} H (T_n)^{2k} + w +
(\ADSL)^{j} Y_{1}^{+} (T_n)^{2k},
\end{split}
\end{equation*}
where $w \in \mathcal{U}(\mathfrak{n}_{n+1}^-) \otimes
\mathfrak{m}_n$. Using $H (T_n)^{2k} = (T_n)^{2k}(H-2k)$, we get
\begin{equation*}
\begin{split}
&[ Y_{1}^{+}, (\ADSL)^j (T_n)^{2k}] \\ &= 2j(n-1-2j)Y_{1}^{-}
(\ADSL)^{j-1} (T_n)^{2k} \\ &+ 4jY_{1}^- (\ADSL)^{j-1} H (T_n)^{2k} + w +
(\ADSL)^{j} [Y_{1}^+, (T_n)^{2k} ] \\ &= 2j(n-1-2j)Y_{1}^{-}
(\ADSL)^{j-1} (T_n)^{2k} \\ &+ 4jY_{1}^- (\ADSL)^{j-1} (T_n)^{2k} H - 8jk
Y_{1}^- (\ADSL)^{j-1} (T_n)^{2k} + w \\ &+ (\ADSL)^{j} 2k(2k-1)
Y_{1}^- (T_n)^{2k-2} + (\ADSL)^j 4k(T_n)^{2k-1} S, \,\,\,\,\,
(\text{where} \,\, S \in \mathfrak{m}_{n+1})\\ &= \{
2j(n-1-2j)+4j\lambda - 8jk \} Y_{1}^- (\ADSL)^{j-1} (T_n)^{2k} \\ &+
2k(2k-1) Y_{1}^- (\ADSL)^{j} (T_n)^{2k-2} \quad \text{mod} \,
\mathcal{U}(\mathfrak{n}_{n+1}^-) (\mathfrak{m}_{n+1} \oplus
\mathbb{C}(H-\lambda)),
\end{split}
\end{equation*}
where we used that $[Y_{1}^-, \ADSL]=0$, that is \eqref{bracket_rel}.

We obtain (mod $\mathcal{U}(\mathfrak{n}_{n+1}^-)(\mathfrak{m}_{n+1}
\oplus \mathbb{C}(H-\lambda))$)
\begin{equation*}
\begin{split}
&[Y_{1}^+, \mathcal{D}_{2N}(\lambda)] \\ =&\sum_{j=0}^{N} a_j(\lambda) (
(2j(n-1-2j) + 4j\lambda - 8j(N-j) ) Y_{1}^- (\ADSL)^{j-1}
(T_n)^{2N-2j} \\ &+ \sum_{j=0}^{N} 2(N-j)(2N-2j-1) Y_{1}^- (\ADSL)^j
(T_n)^{2N-2j-2}.
\end{split}
\end{equation*}
We conclude that $[Y_{1}^+, \mathcal{D}_{2N}(\lambda) ] = 0$ (mod $\ARBIT$)
if and only if
\begin{equation*}
a_r(\lambda) \{ 2r(n-1-2r) + 4r\lambda - 8r(N-r) \} + a_{r-1}(\lambda)
2(N-r+1)(2N-2r+1) = 0
\end{equation*}
for $r=1, \ldots, N$. 

The \emph{only if} statement follows because no
non-trivial linear combination of
$$Y_{1}^{-} (\ADSL)^{k-1} (T_{n}^-)^{2N-2k} = Y_{1}^{-} (\ADSL)^{k-1}
(T_{n}^-)^{2N-2k} \in \mathcal{U}(\mathfrak{g}_{n+1})$$ for $k=1,
\ldots, N$ lies in $\ARBIT$. This follows expanding $(\ADSL)^{k-1}$
into monomials and using the Poincaré-Birkhoff-Witt theorem.

Now suppose that the coefficients $a_j(\lambda)$ satisfy
\eqref{koeff_villkor}. Under this assumption, we know that
\eqref{ekvation_att_visa} is true for $X=Y_{1}^+$, that is, we know
that
\begin{equation*}
[Y_{1}^+, \mathcal{D}_{2N}(\lambda) ] \in \ARBIT.
\end{equation*}
We prove that this implies \eqref{ekvation_att_visa} for \emph{all} $X
\in \mathfrak{n}^+_n$.

First, observe that using \eqref{bracket_rel}, we see that for all
$\tilde{M} \in M^{n}_0$ we have
\begin{equation}\label{ADS_komm}
\Ad(\tilde{M}) \ARBIT \subset \ARBIT.
\end{equation}

Using $[\mathfrak{m}_n, \ADSL] = 0$ and $[\mathfrak{m}_n, T_n] = 0$,
we get
\begin{equation*}
\begin{split}
\ARBIT &\ni
\Ad(\tilde{M}) [Y_1^+, \mathcal{D}_{2N}(\lambda) ] \\&= [\Ad(\tilde{M}) Y_1^+,
  \Ad(\tilde{M}) \mathcal{D}_{2N}(\lambda) ] \\
&= [ \Ad(\tilde{M}) Y_1^+, \mathcal{D}_{2N}(\lambda) ] \\
&= \begin{cases} [ Y_{j}^+, \mathcal{D}_{2N}(\lambda) ] \,\,\,\,\quad\quad \text{for} \;
  \tilde{M}=\exp( \frac{-\pi M_{1j}}{2} ), \\
[Y_1^+ + Q_2^+ - Q_1^+,  \mathcal{D}_{2N}(\lambda) ] \,\,\, \text{for} \;
  \tilde{M} = Z_1^+, \\
[Y_1^+ -Q_1^+ - Q_2^+,  \mathcal{D}_{2N}(\lambda) ] \,\,\, \text{for} \; \tilde{M} =
  Z_1^-. \\
 \end{cases}
\end{split}
\end{equation*}
\end{proof}

\begin{proof}[Proof of Theorem \ref{thm_unikhet}] We only give the proof for 
the case of even $N$. Assume $\mathcal{E} \in
\mathcal{U}(\mathfrak{n}^{-}_{n+1})$ satisfies \eqref{villkorA},
\eqref{villkorB} and \eqref{villkorC}. Using the notation
\eqref{NBASIS}, the Poincaré-Birkhoff-Witt theorem shows that
$\mathcal{E}$ can be written as a linear combination of
\begin{equation}
\{ (I_1)^{a_1} \dots (I_n)^{a_n} \; \; \vert \; \; a_1, \dots, a_n \in
\mathbb{N} \}.
\label{BASIS_EL}
\end{equation}
Now \eqref{villkorC} and the fact that $[H, X] = -X$ for
all $X \in \mathfrak{n}_{n}^{-}$ show that $\mathcal{E}$ is a linear
combination of basis elements from \eqref{BASIS_EL}, where
\begin{equation}
a_1 + \dots + a_n = 2N.
\label{deg_villkor}
\end{equation}
We see that
\begin{equation*}
\mathcal{E} = \sum_{j=0}^{2N} p_j T_n^{2N-j},
\end{equation*}
where $p_j \in \mathcal{U}(\mathfrak{n}_n^-)$ is a homogeneous
polynomial of degree $j$ in the variables $I_1, \dots, I_{n-1}$ given by \eqref{NBASIS}.

For all $R \in \mathfrak{m}_n$ we have $[R, T_n] = 0$  and hence
\begin{equation*}
[R, \mathcal{E}] = [R,p_1] T_{n}^{2N-1} + \dots + [R, p_{2N-1}]
T_{n} + [R,p_{2N}] = 0.
\end{equation*}
We have $[\mathfrak{m}_n, \mathfrak{n}_n^-] \subset \mathfrak{n}_n^-$ and
hence
$[R, p_j ] \in \mathcal{U}(\mathfrak{n}_n^-)$. Using the
Poincaré-Birkhoff-Witt theorem, we conclude that for $j=1, \dots, 2N$ we
have that $[R,p_j] = 0$.

Proposition \ref{kommprop} shows that $p_j$ is a polynomial in $\ADSL$
for every $j=1, \dots, 2N$. Using that $p_j$ is a homogeneous
polynomial of degree $j$, we deduce that $p_j = 0$ for odd $j$, and
$p_{2i} = c_{i}\left( \ADSL \right)^i$ for some complex constant
$c_i$.

We conclude that
\begin{equation*}
\mathcal{E} = \sum_{j=0}^{N} a_j(\lambda) (\LAP_{n-1})^{j} (T_n)^{2N-2j},
\end{equation*}
and we see that $\mathcal{E}$ is of the form
\eqref{alg_core_form}. Now the \emph{only if} statement in Theorem
\ref{alg_core} completes the proof.
\end{proof}

\section{$\mathcal{D}_{N}^{0}(\lambda)$ as families of homomorphisms of 
Verma modules}\label{verma}

Let $W$ be a $\mathcal{U}(\mathfrak{p_m})$-module. The algebra
$\mathcal{U}(\mathfrak{g}_m)$ acts on the vector space
$\mathcal{U}(\mathfrak{g}_m ) \otimes W$ by
\begin{equation}\label{u_verkan_formel}
u_1 (u_2 \otimes w) = (u_1 u_2) \otimes w.
\end{equation}
Let $I_{W}(\mathfrak{g}_m) \subset \mathcal{U}(\mathfrak{g}_m) \otimes
W$ be the left $\mathcal{U}(\mathfrak{g}_m)$-ideal generated by the
elements $X \otimes w - 1 \otimes (X \cdot w)$, $X \in
\mathfrak{p}_m$. $I_{W}(\mathfrak{g}_m)$ equals the subspace of
$\mathcal{U}(\mathfrak{g}_m) \otimes W$ spanned by the elements
\begin{equation}\label{spannedE}
(n p) \otimes w - n \otimes (p \cdot w),
\end{equation}  
where $n \in \mathcal{N}, p \in \mathcal{P}, w \in W$. Here $\mathcal{N}$ is a basis
for $\mathcal{U}(\mathfrak{n}^-_m)$ and $\mathcal{P}$ is a basis for  
$\mathcal{U}(\mathfrak{p}_m)$. In fact, for any $u \in
\mathcal{U}(\mathfrak{g}_{m})$, $p \in \mathcal{U}(\mathfrak{p}_{m})$
and $w \in W$ 
\begin{equation*}
u p \otimes w - u \otimes (p \cdot w)
\end{equation*}
can be written as a linear combination of terms of the form
\eqref{spannedE}. Using the Poincaré-Birkhoff-Witt theorem, we know
that $u$ can be written as a linear combination of terms of the form
$n_0 p_0$ where  $n_0 \in \mathcal{U}(\mathfrak{n}^{-}_{m})$, $p_0 \in
\mathcal{U}(\mathfrak{p}_m)$. We see that
\begin{multline*}
n_0 p_0 p \otimes w - n_0 p_0 \otimes (p \cdot w) \\
= \Big( n_0 p_0 p
\otimes w - n_0 \otimes (p_0 p) \cdot w \Big) - \Big(n_0 p_0 \otimes
(p \cdot w) + n_0 \otimes (p_0 \cdot (p \cdot w)) \Big).
\end{multline*}

The \emph{generalized Verma module} induced from $W$ is defined by
the vector space quotient
\begin{equation}\label{GVM}
\mathcal{M}_{W} (\mathfrak{g}_m) = (\mathcal{U}(\mathfrak{g}_m)\otimes W)/
I_{W}(\mathfrak{g}_m)
\end{equation}
with the left $\mathcal{U}(\mathfrak{g}_m)$-action. We also use the
notation $\mathcal{U}(\mathfrak{g}_m) \otimes_{\mathcal{U}(\mathfrak{p}_m)} W$ for
$\mathcal{M}_{W}(\mathfrak{g}_m)$. 

For $\lambda \in \c$ let $\xi_{\lambda}: \mathfrak{a} \rightarrow
\mathbb{C}$ be the character $\xi_{\lambda}(tH) = t\lambda$. The
vector space $W = \mathbb{C}_{\lambda}$ is made into a left
$\mathcal{U}(\mathfrak{p_m})$-module by
\begin{align*}
X z & = \xi_{\lambda}(X) z & \text{for} \; X \in \mathfrak{a}, \\
X z & = 0 & \text{for} \; X \in \mathfrak{m}_m \oplus \mathfrak{n}^{+}_m.
\end{align*}
Let 
$$ 
I_{\lambda}(\mathfrak{g}_m) 
= I_{\mathbb{C}_{\lambda}}(\mathfrak{g}_m) \quad \mbox{and}
\mathcal{M}_{\lambda}(\mathfrak{g}_m) =
\mathcal{M}_{\mathbb{C}_{\lambda}}(\mathfrak{g}_m).
$$

We view $\mathcal{U}(\mathfrak{g}_{n+1}) \otimes \mathbb{C}_{\lambda}$
as a $\mathcal{U}(\mathfrak{g}_n)$-module with the action
$$
u_1(u \otimes v) = (i(u_1) u) \otimes v
$$ 
for $u_1 \in \mathcal{U}(\mathfrak{g}_n)$, where 
$i: \mathcal{U}(\mathfrak{g}_n) \rightarrow \mathcal{U}(\mathfrak{g}_{n+1})$ 
is the inclusion induced by \eqref{inkl}.

\begin{lemma}\label{easylemma} $\lambda \in \mathbb{C}$, 
$W = \mathbb{C}_{\lambda}$, $u \otimes 1 \in I_{W}(\mathfrak{g}_m)$,
$u \in \mathcal{U}(\mathfrak{n}_{m}^{-})$ implies that $u=0$.
\end{lemma}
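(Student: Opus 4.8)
The plan is to exploit the Poincaré--Birkhoff--Witt (PBW) decomposition $\mathcal{U}(\mathfrak{g}_m) = \mathcal{U}(\mathfrak{n}_m^-) \otimes \mathcal{U}(\mathfrak{m}_m \oplus \mathfrak{a}) \otimes \mathcal{U}(\mathfrak{n}_m^+)$, relative to which the generalized Verma module $\mathcal{M}_\lambda(\mathfrak{g}_m)$ is a free $\mathcal{U}(\mathfrak{n}_m^-)$-module of rank one. Concretely, recall from the discussion preceding \eqref{GVM} that $I_W(\mathfrak{g}_m)$ is spanned by the elements $(np)\otimes w - n\otimes(p\cdot w)$ with $n$ running over a PBW-basis $\mathcal{N}$ of $\mathcal{U}(\mathfrak{n}_m^-)$ and $p$ over a PBW-basis $\mathcal{P}$ of $\mathcal{U}(\mathfrak{p}_m)$. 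Taking $\mathcal{P}$ to consist of ordered monomials in a basis of $\mathfrak{m}_m\oplus\mathfrak{a}$ followed by ordered monomials in a basis of $\mathfrak{n}_m^+$, with the convention that $1 \in \mathcal{P}$, the only spanning element with $p = 1$ is the zero element; so every nonzero generator has $p \neq 1$, i.e.\ $np$ involves at least one factor from $\mathfrak{m}_m\oplus\mathfrak{a}\oplus\mathfrak{n}_m^+$ when expanded in the PBW basis of $\mathcal{U}(\mathfrak{g}_m)$ adapted to the ordering $\mathfrak{n}_m^- , \mathfrak{m}_m , \mathfrak{a} , \mathfrak{n}_m^+$.

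First I would fix the PBW basis of $\mathcal{U}(\mathfrak{g}_m)$ consisting of monomials $Y\cdot Z \cdot A \cdot X$ with $Y$ an ordered monomial in $\mathfrak{n}_m^-$, $Z$ in $\mathfrak{m}_m$, $A$ in $\mathfrak{a}$, and $X$ in $\mathfrak{n}_m^+$. Writing $W = \mathbb{C}_\lambda$, I would check that each spanning element $(np)\otimes 1 - n\otimes (p\cdot 1)$ of $I_W(\mathfrak{g}_m)$, after expanding $np$ into this PBW basis, lies in the span of basis monomials $Y Z A X$ in which the ``$ZAX$''-part is \emph{not} the constant $1$, minus a multiple (namely $\xi_\lambda(p)$ if $p\in\mathcal{U}(\mathfrak{a})$ and $0$ otherwise) of $n\otimes 1$. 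Actually it is cleaner to phrase it as: $I_W(\mathfrak{g}_m)$ is precisely the kernel of the projection $\mathcal{U}(\mathfrak{g}_m)\otimes W \to \mathcal{U}(\mathfrak{n}_m^-)\otimes W$ that, on a PBW monomial $u\otimes w$, replaces $u = \sum_k Y_k Z_k A_k X_k$ by $\sum_{k: Z_k A_k X_k \in \mathbb{C}} Y_k \otimes (Z_k A_k X_k \cdot w)$ — equivalently, the map realizing the isomorphism $\mathcal{M}_\lambda(\mathfrak{g}_m)\cong \mathcal{U}(\mathfrak{n}_m^-)\otimes W$ as vector spaces. Granting that $I_W(\mathfrak{g}_m)$ equals this kernel (which is the standard freeness statement for generalized Verma modules and is implicit in the spanning-set discussion already in the text), the conclusion is immediate: if $u \in \mathcal{U}(\mathfrak{n}_m^-)$ and $u\otimes 1 \in I_W(\mathfrak{g}_m)$, then applying the projection gives $u\otimes 1 = 0$ in $\mathcal{U}(\mathfrak{n}_m^-)\otimes W$, whence $u = 0$ since $W\cong\mathbb{C}$.

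Alternatively, and perhaps more in the spirit of the paper, I would avoid invoking freeness as a black box and argue directly with the explicit spanning set. Suppose $u\otimes 1 = \sum_\alpha c_\alpha \big( (n_\alpha p_\alpha)\otimes 1 - n_\alpha\otimes(p_\alpha\cdot 1)\big)$ with $n_\alpha\in\mathcal{N}$, $p_\alpha\in\mathcal{P}$, $p_\alpha\neq 1$. Expand every $n_\alpha p_\alpha$ in the adapted PBW basis $YZAX$ of $\mathcal{U}(\mathfrak{g}_m)$: because $p_\alpha \in \mathcal{U}(\mathfrak{p}_m)$ and commuting $\mathfrak{n}_m^+$ past $\mathfrak{n}_m^-$ produces terms in $\mathcal{U}(\mathfrak{n}_m^-)(\mathfrak{m}_m\oplus\mathfrak{a})\mathcal{U}(\mathfrak{n}_m^+)$ of the same or lower $\mathfrak{n}_m^+$-degree, while $\mathfrak{m}_m\oplus\mathfrak{a}$ normalizes $\mathfrak{n}_m^-$, one sees that $n_\alpha p_\alpha$ expands into basis monomials $YZAX$ with $(Z,A,X)\neq(1,1,1)$ whenever $p_\alpha\neq 1$. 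On the other side, $n_\alpha\otimes(p_\alpha\cdot 1)$ is a scalar times $n_\alpha\otimes 1$, i.e.\ supported on basis monomials with $(Z,A,X)=(1,1,1)$. Therefore, collecting the coefficient of the ``pure $\mathcal{U}(\mathfrak{n}_m^-)$'' part, the left side contributes $u$ and the right side contributes $-\sum_\alpha c_\alpha \xi_\lambda(p_\alpha)\, n_\alpha$, while the part with $(Z,A,X)\neq(1,1,1)$ must vanish identically. Since $u\in\mathcal{U}(\mathfrak{n}_m^-)$ has \emph{no} component with $(Z,A,X)\neq(1,1,1)$, matching forces $u = -\sum_\alpha c_\alpha \xi_\lambda(p_\alpha) n_\alpha$; but we must also then re-examine — and here lies the one genuinely delicate bookkeeping point — whether the ``contamination'' terms can conspire. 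The cleanest fix is to invoke the vector-space isomorphism $\mathcal{U}(\mathfrak{g}_m)\otimes W \cong I_W(\mathfrak{g}_m)\oplus \big(\mathcal{U}(\mathfrak{n}_m^-)\otimes W\big)$ coming from PBW, under which $u\otimes 1$ with $u\in\mathcal{U}(\mathfrak{n}_m^-)$ lies in the second summand; hence $u\otimes 1\in I_W(\mathfrak{g}_m)$ forces $u\otimes 1 = 0$, so $u=0$.

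The main obstacle is purely organizational rather than deep: one must be careful to commute $\mathfrak{n}_m^+$-factors to the right past $\mathfrak{n}_m^-$-factors \emph{controlling the resulting lower-order terms}, so as to see cleanly that an element of $I_W(\mathfrak{g}_m)$ has trivial component in the PBW-complement $\mathcal{U}(\mathfrak{n}_m^-)\otimes W$. Once that bookkeeping is set up — or once one simply cites the standard fact that $\mathcal{M}_\lambda(\mathfrak{g}_m)$ is free over $\mathcal{U}(\mathfrak{n}_m^-)$ of rank one, so that the composite $\mathcal{U}(\mathfrak{n}_m^-)\hookrightarrow \mathcal{U}(\mathfrak{g}_m)\otimes W \twoheadrightarrow \mathcal{M}_\lambda(\mathfrak{g}_m)$ is injective — the lemma follows in one line.
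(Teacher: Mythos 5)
Your proof is correct, and it rests on the same underlying fact as the paper's: the PBW decomposition $\mathcal{U}(\mathfrak{g}_m)\otimes W \cong \big(\mathcal{U}(\mathfrak{n}_m^-)\otimes W\big)\oplus I_W(\mathfrak{g}_m)$, equivalently the freeness of $\mathcal{M}_\lambda(\mathfrak{g}_m)$ over $\mathcal{U}(\mathfrak{n}_m^-)$. The difference is one of packaging: the paper carries out the direct computation with the spanning set $(n_ip_i)\otimes 1-\xi_\lambda(p_i)\,n_i\otimes 1$, separating the cases $p_i\in\mathbb{C}$ and $p_i\notin\mathbb{C}$ and matching coefficients in the PBW basis, whereas your primary argument cites the direct-sum decomposition as a black box and observes that $u\otimes 1$ with $u\in\mathcal{U}(\mathfrak{n}_m^-)$ lies in the complementary summand. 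Both are fine; yours is shorter, the paper's is self-contained. One thing worth noting: the ``delicate bookkeeping'' you flag in your second paragraph is a phantom issue. Since $n_\alpha\in\mathcal{N}\subset\mathcal{U}(\mathfrak{n}_m^-)$ and $p_\alpha\in\mathcal{P}\subset\mathcal{U}(\mathfrak{p}_m)$, the product $n_\alpha p_\alpha$ is \emph{already} a PBW basis monomial in the ordering $\mathfrak{n}_m^-$, $\mathfrak{m}_m$, $\mathfrak{a}$, $\mathfrak{n}_m^+$: there is never an $\mathfrak{n}_m^+$-factor sitting to the left of an $\mathfrak{n}_m^-$-factor, so no commuting and no ``contamination'' occurs. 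Had you noticed this, your second, more hands-on argument would have closed in one line exactly as the paper's does, without needing to retreat to the freeness statement.
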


\begin{proof} Observe that any element in $I_{\mathbb{C}_{\lambda}}(\mathfrak{g}_m)$ is
spanned by elements of the form \eqref{spannedE}, and hence $u \otimes
1$ can be written as a linear combination of
\begin{equation}
(n_i p_i) \otimes 1 - \xi_{\lambda}(p_i) n_i \otimes 1 = ( n_i p_i -
\xi_{\lambda}(p_i) n_i ) \otimes 1, \,\, i=1, \dots, r
\label{easyp}
\end{equation}
for some elements $p_1, p_2, \dots, p_r \in \mathcal{P}$ and elements
$n_1, \dots, n_r \in \mathcal{N}$ such that $n_i p_i$, $i=1, \dots, r$
are linearly independent. We conclude that $u$ is a linear
combination of
\begin{equation}
n_i p_i - \xi_{\lambda}(p_i) n_i, \,\,\, i=1, \dots, r.
\label{easyp2}
\end{equation}
Now assume that $p_i \not \in \mathbb{C}$ for all $1 \leq i \leq r$. 
We see that vector space spanned by \eqref{easyp2} has empty
intersection with $\mathcal{U}(\mathfrak{n}_m^-)$. This implies that
$u = 0$. 

Now assume that $p_i \in \mathbb{C}$ for some $1 \leq i \leq
r$. Reordering the terms in \eqref{easyp2}, we can assume that $p_1,
\dots, p_s \in \mathbb{C}$, and $p_{s+1}, \dots, p_r \not \in
\mathbb{C}$. Now $p_i \in \mathbb{C}$ for $1 \leq i \leq s$ implies
that $n_i p_i - \xi_{\lambda}(p_i) n_i = 0$ for $1 \leq i \leq s$. We
conclude that $u$ is a linear combination of \eqref{easyp2} for $s+1
\leq i \leq r$. But the vector space spanned by the terms in
\eqref{easyp2} with $s+1 \leq i \leq r$ has empty intersection with
$\mathcal{U}(\mathfrak{n}_m^-)$, and hence $u=0$. \end{proof}

\begin{theorem} For any $\lambda \in \c$, the map
\begin{equation}\label{huvudavbildning}
\mathcal{U}(\mathfrak{g}_n) \otimes \mathbb{C}_{\lambda-N} \ni T
\otimes 1 \mapsto i(T) \mathcal{D}_N^{0}(\lambda) \otimes 1 \in
\mathcal{U}(\mathfrak{g}_{n+1}) \otimes \mathbb{C}_{\lambda}
\end{equation}
induces a homomorphism
\begin{equation}
\varphi_0: \mathcal{M}_{\lambda-N}(\mathfrak{g}_n) \rightarrow
\mathcal{M}_{\lambda}(\mathfrak{g}_{n+1})
\label{homovar}
\end{equation}
of $\mathcal{U}(\mathfrak{g}_n)$-modules. Furthermore, 
$$ 
\mathrm{Hom}_{\mathcal{U}(\mathfrak{g}_n)}
(\mathcal{M}_{\lambda-N}(\mathfrak{g}_n), 
\mathcal{M}_{\lambda}(\mathfrak{g}_{n+1})) 
= \mathrm{span}_{\mathbb{C}}(\varphi_0 ).
$$
\end{theorem}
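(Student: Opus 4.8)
The plan is to deduce both assertions from the algebraic characterization Theorem~\ref{thm_unikhet}, together with the injectivity statement of Lemma~\ref{easylemma}. I will use throughout that the inclusion $i\colon\mathfrak{g}_n\hookrightarrow\mathfrak{g}_{n+1}$ of \eqref{inkl} respects the parabolic data, i.e.\ $i(\mathfrak{m}_n)\subset\mathfrak{m}_{n+1}$, $i(\mathfrak{n}_n^\pm)\subset\mathfrak{n}_{n+1}^\pm$ and $i(H)=H$, which is read off from the matrix models of Section~\ref{prel} ($\mathfrak{n}_{n+1}^-$ being spanned by $i(\mathfrak{n}_n^-)$ together with the extra vector $T_n=Y_{n-2}^-$). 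By Poincar\'e-Birkhoff-Witt the map $u\mapsto u\otimes 1$ is a linear isomorphism $\mathcal{U}(\mathfrak{n}_m^-)\to\mathcal{M}_\lambda(\mathfrak{g}_m)$, of which Lemma~\ref{easylemma} is exactly the injectivity; write $v_m$ for the class of $1\otimes 1$. I also record that $\ARBIT\otimes 1\subset I_\lambda(\mathfrak{g}_{n+1})$, since modulo $I_\lambda(\mathfrak{g}_{n+1})$ one has $uY\otimes 1\equiv u\otimes(Y\cdot 1)=0$ for $Y\in\mathfrak{m}_{n+1}$ and $u(H-\lambda)\otimes 1\equiv u\otimes(H\cdot 1)-\lambda\,u\otimes 1=0$.

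\emph{Well-definedness.} Since \eqref{huvudavbildning} is $\mathcal{U}(\mathfrak{g}_n)$-linear, it descends to the quotients iff it carries the left-ideal generators $X\otimes 1-\xi_{\lambda-N}(X)(1\otimes 1)$, $X\in\mathfrak{p}_n=\mathfrak{m}_n\oplus\mathfrak{a}\oplus\mathfrak{n}_n^+$, of $I_{\lambda-N}(\mathfrak{g}_n)$ into $I_\lambda(\mathfrak{g}_{n+1})$. Writing $i(X)\mathcal{D}_N^0(\lambda)=\mathcal{D}_N^0(\lambda)\,i(X)+[i(X),\mathcal{D}_N^0(\lambda)]$ and using $\mathcal{D}_N^0(\lambda)\,i(X)\otimes 1\equiv\xi_\lambda(i(X))\,\mathcal{D}_N^0(\lambda)\otimes 1$ (as $i(X)\in\mathfrak{p}_{n+1}$), the three cases reduce exactly to the three properties of $\mathcal{D}_N^0(\lambda)$ provided by Theorem~\ref{thm_unikhet}: $[X,\mathcal{D}_N^0(\lambda)]=0$ for $X\in\mathfrak{m}_n$ (condition \eqref{villkorA}); $[X,\mathcal{D}_N^0(\lambda)]\in\ARBIT$ for $X\in\mathfrak{n}_n^+$ (condition \eqref{villkorB}, together with the observation above); and, for $X=H$, $i(H)\mathcal{D}_N^0(\lambda)\otimes 1=\mathcal{D}_N^0(\lambda)H\otimes 1+[H,\mathcal{D}_N^0(\lambda)]\otimes 1\equiv\lambda\,\mathcal{D}_N^0(\lambda)\otimes 1-N\,\mathcal{D}_N^0(\lambda)\otimes 1=\xi_{\lambda-N}(H)\,\mathcal{D}_N^0(\lambda)\otimes 1$ (condition \eqref{villkorC}). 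Thus $\varphi_0$ is a well-defined $\mathcal{U}(\mathfrak{g}_n)$-homomorphism, and it is nonzero because $\varphi_0(v_n)=\mathcal{D}_N^0(\lambda)\otimes 1\neq 0$ by Lemma~\ref{easylemma}.

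\emph{One-dimensionality.} By the universal property of $\mathcal{M}_{\lambda-N}(\mathfrak{g}_n)=\mathcal{U}(\mathfrak{g}_n)\otimes_{\mathcal{U}(\mathfrak{p}_n)}\mathbb{C}_{\lambda-N}$ (Frobenius reciprocity), evaluation $\psi\mapsto\psi(v_n)$ identifies $\mathrm{Hom}_{\mathcal{U}(\mathfrak{g}_n)}(\mathcal{M}_{\lambda-N}(\mathfrak{g}_n),\mathcal{M}_\lambda(\mathfrak{g}_{n+1}))$ with the space of $w\in\mathcal{M}_\lambda(\mathfrak{g}_{n+1})$ with $i(\mathfrak{m}_n)w=0$, $i(\mathfrak{n}_n^+)w=0$, $i(H)w=(\lambda-N)w$. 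Writing $w=\mathcal{E}\,v_{n+1}$ with $\mathcal{E}\in\mathcal{U}(\mathfrak{n}_{n+1}^-)$ unique, I would commute $i(X)$ past $\mathcal{E}$ and use $i(\mathfrak{m}_n\oplus\mathfrak{n}_n^+)v_{n+1}=0$, $Hv_{n+1}=\lambda v_{n+1}$, to convert these into conditions on $\mathcal{E}$. For $X\in\mathfrak{m}_n$ and for $X=H$, since $[\mathfrak{m}_{n+1},\mathfrak{n}_{n+1}^-]\subset\mathfrak{n}_{n+1}^-$ and $[H,\mathfrak{n}_{n+1}^-]\subset\mathfrak{n}_{n+1}^-$, injectivity of $u\mapsto u\,v_{n+1}$ gives $[X,\mathcal{E}]=0$ and $[H,\mathcal{E}]=-N\mathcal{E}$, i.e.\ \eqref{villkorA} and \eqref{villkorC}. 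For $X\in\mathfrak{n}_n^+$, Lemma~\ref{formlemma} writes $[i(X),\mathcal{E}]=A+uH+C$ with $A\in\mathcal{U}(\mathfrak{n}_{n+1}^-)\mathfrak{m}_{n+1}$ and $u,C\in\mathcal{U}(\mathfrak{n}_{n+1}^-)$, so $[i(X),\mathcal{E}]v_{n+1}=(\lambda u+C)v_{n+1}$, which vanishes iff $C=-\lambda u$, i.e.\ iff $[i(X),\mathcal{E}]=A+u(H-\lambda)\in\ARBIT$ --- condition \eqref{villkorB}. By Theorem~\ref{thm_unikhet} the set of such $\mathcal{E}$ is $\mathbb{C}\,\mathcal{D}_N^0(\lambda)$; since the homomorphism corresponding to $\mathcal{E}=\mathcal{D}_N^0(\lambda)$ sends $v_n$ to $\mathcal{D}_N^0(\lambda)\otimes 1=\varphi_0(v_n)$ and hence equals $\varphi_0$, the Hom-space is $\mathbb{C}\,\varphi_0$.

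The main obstacle is the last translation: for $X\in\mathfrak{n}_n^+$ one must know, via Lemma~\ref{formlemma}, that $[i(X),\mathcal{E}]$ has no $\mathfrak{n}_{n+1}^+$-component, so that after applying it to the cyclic vector $v_{n+1}$ the only loss is the $\mathfrak{m}_{n+1}$- and $(H-\lambda)$-parts, whereby ``annihilates $v_{n+1}$'' becomes precisely ``lies in $\ARBIT$''. The remaining ingredients --- the parabolic compatibility of $i$, $\mathcal{U}(\mathfrak{g}_n)$-linearity, and PBW-freeness of the generalized Verma modules --- are routine, so the theorem then follows immediately from Theorem~\ref{thm_unikhet}.
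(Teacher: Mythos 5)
Your proof is correct, and the substance is the same as the paper's: both reduce the statement to the algebraic characterization of Theorem~\ref{thm_unikhet} by translating the three families of ideal generators (for $\mathfrak{m}_n$, $\mathfrak{a}$, $\mathfrak{n}_n^+$) into the commutator conditions \eqref{villkorA}--\eqref{villkorC} on $\mathcal{E}\in\mathcal{U}(\mathfrak{n}_{n+1}^-)$, using Lemma~\ref{formlemma} to control the $\mathfrak{n}_n^+$-case and Lemma~\ref{easylemma} (equivalently PBW-freeness of the Verma module over $\mathcal{U}(\mathfrak{n}_{n+1}^-)$) to conclude the identities exactly rather than just modulo the ideal. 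The only difference is organizational: the paper shows directly that any $\varphi\in\mathrm{Hom}$ is induced by a map $\zeta\colon T\otimes 1\mapsto i(T)F\otimes 1$ and then tests $\zeta$ on the generators $H\otimes 1-1\otimes(\lambda-N)$, $M\otimes 1$, $X\otimes 1$, while you package this step as Frobenius reciprocity, identifying $\mathrm{Hom}$ with the subspace of $w\in\mathcal{M}_\lambda(\mathfrak{g}_{n+1})$ on which $i(\mathfrak{p}_n)$ acts through $\xi_{\lambda-N}$, and then writing $w=\mathcal{E}\,v_{n+1}$. This is a cleaner conceptual framing but leads to the same computations, so no new ideas are needed; in particular your separate well-definedness paragraph is strictly speaking already subsumed by the reciprocity bijection, though it is useful for checking nonvanishing of $\varphi_0$.
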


\begin{proof}
First, we prove that $\mathrm{Hom}_{\mathcal{U}(\mathfrak{g}_n) } (
\mathcal{M}_{\lambda-N}(\mathfrak{g}_n),
\mathcal{M}_{\lambda}(\mathfrak{g}_{n+1}) ) \subset \mathrm{span}_{\mathbb{C}} (
\varphi_0 )$. Assume that $$\varphi \in \mathrm{Hom}_{\mathcal{U}(\mathfrak{g}_n) } (
\mathcal{M}_{\lambda-N}(\mathfrak{g}_n),
\mathcal{M}_{\lambda}(\mathfrak{g}_{n+1}) ).$$
We have
$$
\varphi( \{ 1 \otimes 1 \} ) = \{ F \otimes 1 \},
$$
for some $F \in \mathcal{U}(\mathfrak{g}_{n+1})$. Here we
use $\{ \}$ to denote equivalence classes. Since
\begin{equation*}
\varphi ( \{ u \otimes 1 \} ) = \varphi( u \{ 1 \otimes 1 \} ) = i(u)
\varphi( \{ 1 \otimes 1 \} ) = i(u) \{ F \otimes 1 \} = \{ (i(u)F) \otimes 1 \}
\end{equation*}
for all $u \in \mathcal{U}(\mathfrak{g}_n)$, we conclude that $\varphi$
is induced from a map 
\begin{equation}\label{induced}
\zeta: \mathcal{U}(\mathfrak{g}_n) \otimes \mathbb{C}_{\lambda-N} \ni
T \otimes 1 \mapsto i(T) F \otimes 1 \in
\mathcal{U}(\mathfrak{g}_{n+1}) \otimes \mathbb{C}_{\lambda},
\end{equation}
where $F \in \mathcal{U}(\mathfrak{g}_{n+1})$. Furthermore, we can
assume without loss of generality that $F \in
\mathcal{U}(\mathfrak{n}_{n+1}^-)$: By the Poincaré-Birkhoff-Witt
theorem $F \in \mathcal{U}(\mathfrak{g}_{n+1})$ can be written as a
linear combination of terms of the form $n p$, where $n \in
\mathcal{U}(\mathfrak{n}_{n+1}^{-})$ and $p \in
\mathcal{U}(\mathfrak{p}_{n+1})$. For $n \in U(\mathfrak{n}^-_{n+1})$
and $p \in U(\mathfrak{p}_{n+1})$ we have for some complex number C
that
$$
\{i(T) np \otimes 1\} = \{i(T) n \otimes p \cdot 1 \} = \{ i(T) n \otimes C \}
= \{i(T) (Cn) \otimes 1\},
$$
and hence we conclude that
$$\{i(T) F \otimes 1\} = \{i(T) \tilde{F} \otimes 1\}$$
for some $\tilde{F} \in U(\mathfrak{n}^-_{n+1})$.

It is also easy to see that a map $f: \mathcal{U}(\mathfrak{g}_n)
\otimes \mathbb{C}_{\lambda-N} \rightarrow
\mathcal{U}(\mathfrak{g}_{n+1}) \otimes
\mathbb{C}_{\lambda} $ induces a homomorphism
$\mathcal{M}_{\lambda-N}(\mathfrak{g}_n) \rightarrow
\mathcal{M}_{\lambda}(\mathfrak{g}_{n+1})$ if and only if
\begin{equation}\label{villkorFOR}
f( I_{\lambda-N}(\mathfrak{g}_n) ) \subset I_{\lambda}(\mathfrak{g}_{n+1}).
\end{equation}
This implies that $$\zeta(
I_{\lambda-N} ( \mathfrak{g}_n )) \subset
I_{\lambda}(\mathfrak{g}_{n+1}).
$$

We now find conditions for $F$ which are equivalent to that $\varphi$
induces a $\mathcal{U}(\mathfrak{g}_n)$-homomorphism \eqref{homovar}. Observe that
\begin{equation}\label{not1}
H \otimes 1 - 1 \otimes \xi_{\lambda-N}(H)1 = H \otimes 1 - 1 \otimes
(\lambda-N) \in I_{\lambda-N}(\mathfrak{g}_n).
\end{equation}
We compute
\begin{equation}\label{not2}
\begin{split}
\zeta( H \otimes 1 - 1 \otimes (\lambda-N) ) &= H F \otimes 1 - F
\otimes (\lambda-N) \\
&= \left( [H,F] + F H \right)\otimes 1 - F \otimes
(\lambda-N) \\ &= [H,F] \otimes 1 + F(H \otimes 1 - 1 \otimes
\xi_{\lambda}(H) 1) + N F \otimes 1.
\end{split}
\end{equation}
We see that
\begin{equation}\label{HAA}
([H,F] + NF) \otimes 1 \in I_{\lambda}(\mathfrak{g}_{n+1}).
\end{equation}
Now $F \in \mathcal{U}(\mathfrak{n}_{n+1}^{-})$ and hence $[H, F] \in
\mathcal{U}(\mathfrak{n}_{n+1}^{-})$. Using \eqref{HAA} and Lemma
\ref{easylemma}, we see that
\begin{equation}
\boxed{[H,F] + NF = 0.}
\label{VILL_A}
\end{equation}
For $M \in \mathfrak{m}_{n}$ we have
\begin{equation}
M \otimes 1 - 1 \otimes \xi_{\lambda-N}(M)1 = M \otimes 1 \in
I_{\lambda-N}(\mathfrak{g}_n).
\label{not3}
\end{equation}
We compute
\begin{equation}
\zeta( M \otimes 1 ) = M F \otimes 1 = FM \otimes 1 + ([M,F] \otimes 1).
\label{not4}
\end{equation}
We see that
\begin{equation}
[M,F] \otimes 1 \in I_{\lambda}(\mathfrak{g}_{n+1}).
\label{HAA2}
\end{equation}
Now $F \in \mathcal{U}(\mathfrak{n}_{n+1}^{-})$ and hence $[M,F] \in
\mathcal{U}(\mathfrak{n}_{n+1}^{-})$. Using \eqref{HAA2} and Lemma
\ref{easylemma}, we see that
\begin{equation}
\boxed{[M,F] = 0.}
\label{VILL_B}
\end{equation}
For $X \in \mathfrak{n}_{n}^{+}$ we have
\begin{equation}
X \otimes 1 - 1 \otimes \xi_{\lambda-N}(X) = X \otimes 1 \in
I_{\lambda-N}(\mathfrak{g}_n).
\label{not5}
\end{equation}
We compute
\begin{equation}
\zeta ( X \otimes 1) = XF \otimes 1 = [X,F] \otimes 1 + FX \otimes 1.
\label{not6}
\end{equation}
We see that
\begin{equation*}
[X,F] \otimes 1 \in I_{\lambda}(\mathfrak{g}_{n+1}).
\end{equation*}
Using the notation \eqref{NBASIS}, $F \in
\mathcal{U}(\mathfrak{n}_{n+1}^{-})$ and Lemma \ref{formlemma} shows that
\begin{equation*}
\begin{split}
[X,F] \otimes 1 &= \sum_{a_1, \dots, a_{n} \in \mathbb{N}} c_{a_1 \dots a_{n} }
(I_1)^{a_1} \dots (I_{n})^{a_{n}} \otimes 1 \\
&+  \sum_{a_1, \dots, a_{n} \in \mathbb{N}} d_{a_1 \dots a_{n} }
(I_1)^{a_1} \dots (I_{n})^{a_{n}} H \otimes 1 \\
&+  \sum_{a_1, \dots, a_{n}
  \in \mathbb{N}, 1 \leq f,g \leq n } e_{a_1 \dots a_{n}, f, g }
(I_1)^{a_1} \dots (I_{n})^{a_{n}} M_{f,g} \otimes 1.
\end{split}
\end{equation*}
Using 
\begin{equation*}
(I_1)^{a_1} \dots (I_{n})^{a_{n}} M_{f,g} \otimes 1 \in I_{\lambda}(\mathfrak{g}_{n+1}),
\end{equation*}
we get 
\begin{equation*}
\begin{split}
&\sum_{a_1, \dots, a_{n} \in \mathbb{N}} c_{a_1 \dots a_{n} }
(I_1)^{a_1} \dots (I_{n})^{a_{n}} \otimes 1 \\
&+  \sum_{a_1, \dots, a_{n} \in \mathbb{N}} d_{a_1 \dots a_{n} }
(I_1)^{a_1} \dots (I_{n})^{a_{n}} H \otimes 1 \in I_{\lambda}(\mathfrak{g}_{n+1}),
\end{split}
\end{equation*}
and hence
\begin{equation*}
\begin{split}
\sum_{a_1, \dots, a_{n} \in \mathbb{N}} ( c_{a_1 \dots a_{n} } -
\lambda d_{a_1 \dots a_{n}} ) (I_1)^{a_1} \dots (I_{n})^{a_{n}}
\otimes 1 \in I_{\lambda}(\mathfrak{g}_{n+1}).
\end{split}
\end{equation*}
Using Lemma \ref{easylemma}, we conclude that $c_{a_1 \dots a_{n}} =
\lambda d_{a_1 \dots a_{n}}$ and hence
\begin{equation}\label{VILL_C}
\boxed{[X,F] \in \mathcal{U}(\mathfrak{n}_{n+1}^-)(\mathfrak{m}_{n+1} 
\oplus \mathbb{C}(H-\lambda)).}
\end{equation}
Theorem \ref{thm_unikhet} and \eqref{VILL_A}, \eqref{VILL_B} and
\eqref{VILL_C} show that $X \in \mathrm{span}_{\mathbb{C}}( \varphi_0)$.

Second, we prove that \eqref{huvudavbildning} satisfy \eqref{villkorFOR}, 
which implies that 
$$ 
\mathrm{span}_{\mathbb{C}}(\varphi_0 ) \subset
\mathrm{Hom}_{\mathcal{U}(\mathfrak{g}_n)}
(\mathcal{M}_{\lambda-N}(\mathfrak{g}_n),
\mathcal{M}_{\lambda}(\mathfrak{g}_{n+1})).
$$ 
From Theorem \ref{thm_unikhet} we see that \eqref{villkorA},
\eqref{villkorB} and \eqref{villkorC} are satisfied for
$\mathcal{E}=\mathcal{D}_{N}^{0}(\lambda)$.

Using \eqref{not1}, \eqref{not2} and \eqref{villkorC}, we conclude that
$$ 
\varphi_{0}(H \otimes 1 - 1 \otimes \xi_{\lambda-N}(H)1 ) \in
I_{\lambda}(\mathfrak{g}_{n+1}).
$$ 
Using \eqref{not3}, \eqref{not4} and \eqref{villkorA}, we conclude that
$$ 
\varphi_{0}(X \otimes 1 - 1 \otimes \xi_{\lambda-N}(X)1) \in
I_{\lambda}(\mathfrak{g}_{n+1})
$$ 
for all $X \in \mathfrak{m}_n$. Using \eqref{not5}, \eqref{not6}
and \eqref{villkorB}, we conclude that
$$ 
\varphi_{0}(X \otimes 1 - 1 \otimes \xi_{\lambda-N}(X)1) \in
I_{\lambda}(\mathfrak{g}_{n+1})
$$ 
for all $X \in \mathfrak{n}_n^+$.
\end{proof}

\section{Induced families of differential intertwining operators}
\label{ind-fam}

In this section we use the algebraic results of Section \ref{algebra}
to prove equivariance of the polynomial families induced by
$\mathcal{D}_N^0(\lambda) \in \mathcal{U}(\mathfrak{n}_{n+1}^-)$ with
respect to certain principal series representations. We view $\mathcal{D}_{N}^{0}(\lambda) \in
\mathcal{U}(\mathfrak{g}_{n+1})$ as a left-invariant differential
operator $C^{\infty}(G^{n+1}) \rightarrow C^{\infty}(G^{n+1})$ using
$R$.
 
\begin{theorem}\label{HJA} Let $\lambda \in \mathbb{C}$. Then $\mathcal{E} \in
\mathcal{U}(\mathfrak{n}_{n+1}^{-})$ induces a left
$G^{n+1}$-equivariant map
\begin{equation*}
\Ind_{P_0^{n+1} }^{G^{n+1}}(\xi_{\lambda}) \rightarrow
\Ind_{P_0^{n} }^{G^{n+1}}(\xi_{\lambda-N})
\end{equation*}
if and only if $\mathcal{E} = c \mathcal{D}_{N}^{0}(\lambda)$
for some $c \in \mathbb{C}$.
\label{HUVA}
\end{theorem}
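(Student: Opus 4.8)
The plan is to translate the equivariance statement into the algebraic condition on $\mathcal{E}$ already characterized by Theorem \ref{thm_unikhet}, by testing the map at the identity element $e \in G^{n+1}$ and exploiting the right-module structure used throughout. First I would recall that $\mathcal{E} \in \mathcal{U}(\mathfrak{n}_{n+1}^-)$, acting from the right via $R$, automatically commutes with the left $G^{n+1}$-action on $C^\infty(G^{n+1})$; so the content of "induces a left $G^{n+1}$-equivariant map" is entirely that $R(\mathcal{E})$ sends the submodule $\Ind_{P_0^{n+1}}^{G^{n+1}}(\xi_\lambda)$ into $\Ind_{P_0^n}^{G^{n+1}}(\xi_{\lambda-N})$, i.e. that for $f$ satisfying the covariance $f(g\tilde m\tilde a\tilde n)=\xi_\lambda(\tilde a)f(g)$ for $\tilde m\tilde a\tilde n\in P_0^{n+1}$, the function $R(\mathcal{E})f$ satisfies $(R(\mathcal{E})f)(g\tilde m\tilde a\tilde n)=\xi_{\lambda-N}(\tilde a)(R(\mathcal{E})f)(g)$ for $\tilde m\tilde a\tilde n\in P_0^n$.

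Next I would reduce this to an infinitesimal condition and a point evaluation. Using \eqref{ad_komm}, $R(\Ad(g)X)=R(g)R(X)R(g^{-1})$, the $P_0^n$-covariance of $R(\mathcal{E})f$ is equivalent to: for all $X\in\mathfrak{m}_n\oplus\mathfrak{a}\oplus\mathfrak{n}_n^+ = \mathfrak{p}_0^n$ (note $P_0^n$ has Langlands decomposition $M_0AN^+$), and all admissible $f$,
$$
\big(R(X)R(\mathcal{E})f\big)(e) = d\xi_{\lambda-N}(X)\,\big(R(\mathcal{E})f\big)(e),
$$
where $d\xi_{\lambda-N}$ is $0$ on $\mathfrak{m}_n\oplus\mathfrak{n}_n^+$ and $\lambda-N$ on $H$. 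Writing $R(X)R(\mathcal{E}) = R(\mathcal{E})R(X) + R([X,\mathcal{E}])$ and using that $f$ is $\mathfrak{p}^{n+1}$-covariant — so $R(X)f$ and more generally $R(u)f$ for $u\in\mathcal{U}(\mathfrak{n}_{n+1}^-)\cdot\big(\mathfrak{m}_{n+1}\oplus\mathfrak{n}_{n+1}^+\big)$ vanish at $e$ after we move the $\mathfrak{m}_{n+1}\oplus\mathfrak{n}_{n+1}^+$ factor to the right where it kills $f(e)$, and $R(H)f(e)=\lambda f(e)$ — this collapses to: $[X,\mathcal{E}]=0$ in $\mathcal{U}(\mathfrak{g}_{n+1})$ modulo $\mathcal{U}(\mathfrak{n}_{n+1}^-)\big(\mathfrak{m}_{n+1}\oplus\mathbb{C}(H-\lambda)\big)$ for $X\in\mathfrak{m}_n$ and for $X\in\mathfrak{n}_n^+$, and $[H,\mathcal{E}]\equiv -N\mathcal{E}$ modulo the same ideal. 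Since $[H,\mathcal{E}]\in\mathcal{U}(\mathfrak{n}_{n+1}^-)$ already and likewise $[M,\mathcal{E}]\in\mathcal{U}(\mathfrak{n}_{n+1}^-)$ for $M\in\mathfrak{m}_n$, Lemma \ref{easylemma} (or directly the PBW argument in Lemma \ref{aux_lemma}) upgrades these "mod ideal" statements to honest equalities $[M,\mathcal{E}]=0$, $[H,\mathcal{E}]=-N\mathcal{E}$; these are exactly conditions \eqref{villkorA} and \eqref{villkorC}, while the $\mathfrak{n}_n^+$ condition is precisely \eqref{villkorB}. Theorem \ref{thm_unikhet} then identifies the solution space as $\mathbb{C}\cdot\mathcal{D}_N^0(\lambda)$, giving both directions.

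The main obstacle — and the step to write carefully — is the faithfulness/point-evaluation argument: one must produce, for a prescribed polynomial datum, an actual $f\in\Ind_{P_0^{n+1}}^{G^{n+1}}(\xi_\lambda)$ whose germ at $e$ along $N_{n+1}^-$ realizes that datum, so that "$(R(u)f)(e)=0$ for all such $f$" forces "$u\in$ the ideal." This is supplied by Lemma \ref{extlemma} together with the reasoning of Lemma \ref{aux_lemma}: any compactly supported smooth function on $(N^-)^{n+1}$ extends to $\Ind_{P_0^{n+1}}^{G^{n+1}}(\xi_\lambda)$, and evaluating $R(u)$ on such an extension at $e$ reads off the corresponding constant-coefficient derivative of the polynomial part at $0$; running over all monomials shows the evaluation map $\mathcal{U}(\mathfrak{n}_{n+1}^-)\to(\text{functionals on such }f)$ is injective. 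I would also need to check the routine point that the target covariance is governed by $\mathfrak{p}_0^n$, not the larger $\mathfrak{p}^n$, which is why this lemma is stated for $P_0$ and is the natural stepping stone to Theorem \ref{I-2}; the passage from $P_0^n$ to $P^n$, $\hat P^n$ (involving the component group $\langle w_1,w_2\rangle$, $J_{(n)}$, and the sign character $\sigma_-$) is deferred to the subsequent results and is not needed here.
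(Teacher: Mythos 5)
Your proposal follows the same route as the paper's own proof: reduce via Theorem \ref{thm_unikhet} to the three algebraic conditions \eqref{villkorA}--\eqref{villkorC}, split the target $P_0^n$-covariance along the Langlands factors $M_0$, $A$, $N^+$, translate each to a commutator condition using \eqref{ad_komm} together with Lemma \ref{formlemma}, and invoke Lemma \ref{aux_lemma} (backed by Lemma \ref{extlemma}) to pass from point evaluations at $e$ to honest identities in $\mathcal{U}(\mathfrak{n}_{n+1}^-)$. The only packaging difference is that you state the reduction to an infinitesimal condition at $e$ up front, whereas the paper works through each of $M_0$, $N^+$, $A$ separately and makes the re-integration step for $M_0$ explicit via $\exp(\mathfrak{m}_n)=M_0^n$; this is cosmetic, not a different argument.
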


\begin{theorem} For $\lambda \in \mathbb{C}$ the element 
$\mathcal{E} \in \mathcal{U}(\mathfrak{n}_{n+1}^{-})$ induces a left
$G^{n+1}$-equivariant map
\begin{equation*}
\Ind_{P^{n+1}}^{G^{n+1}}(\xi_{\lambda}) \rightarrow
\Ind_{P^n}^{G^{n+1}}(\xi_{\lambda-N})
\end{equation*}
if and only if $\mathcal{E} = c \mathcal{D}_{N}^{0}(\lambda)$ for some
$c \in \mathbb{C}$ and $N$ is even.
\end{theorem}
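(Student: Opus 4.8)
The plan is to deduce the claim from Theorem~\ref{HJA} (the analogous statement for the parabolics $P_0$) by tracking the behaviour of $\mathcal{D}_N^0(\lambda)$ under the finite group $M^n/M_0^n$, a set of coset representatives of which is $\{w_1^{(n)},w_2^{(n)},J_{(n)}\}$ by \eqref{MEGENSKAP}. I would first record the formal inputs: the inclusion $i$ sends $M_0^n$, $A^n$, $(N^+)^n$ into the corresponding subgroups of $G^{n+1}$, it satisfies $i(w_j^{(n)})=w_j^{(n+1)}\in \hat M^{n+1}$, and $i(J_{(n)})$ is the element of $K^{n+1}$ fixing the last coordinate and negating all the others; since $\Ad(i(J_{(n)}))$ is the identity on $i(\mathfrak{g}_n)\supseteq\mathfrak{a}^{n+1}$, one gets $i(J_{(n)})\in Z_{K^{n+1}}(\mathfrak{a}^{n+1})\subseteq M^{n+1}$. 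In particular $i(P^n)\subseteq P^{n+1}$, so any $u\in\Ind_{P^{n+1}}^{G^{n+1}}(\xi_\lambda)$ satisfies $R(i(m))u=u$ for all $m\in M^n$, and by the conjugation rule \eqref{ad_komm}, $R(i(m))\bigl(R(\mathcal{E})u\bigr)=R(\Ad(i(m))\mathcal{E})u$ for such $m$ and $u$.

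For the ``if'' direction, let $\mathcal{E}=c\,\mathcal{D}_N^0(\lambda)$ with $N$ even. By Theorem~\ref{HJA}, $R(\mathcal{E})$ already carries $\Ind_{P_0^{n+1}}^{G^{n+1}}(\xi_\lambda)$ into $\Ind_{P_0^n}^{G^{n+1}}(\xi_{\lambda-N})$, and since $\Ind_{P^{n+1}}^{G^{n+1}}(\xi_\lambda)$ sits inside $\Ind_{P_0^{n+1}}^{G^{n+1}}(\xi_\lambda)$ and equivariance is automatic ($R(\mathcal{E})$ commutes with left translations), it only remains to make $R(\mathcal{E})u$ right-invariant under $w_1^{(n)},w_2^{(n)},J_{(n)}$ for $u$ in the smaller space. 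Now $\mathcal{D}_N^0(\lambda)$ is a polynomial in $\LAP_{n-1}\in\mathcal{U}(\mathfrak{n}_n^-)$ and $T_n=Y_{n-2}^-$ in which every power of $T_n$ has the parity of $N$; the element $\LAP_{n-1}$ is $\Ad(M^n)$-invariant (by Proposition~\ref{kommprop} it is, up to scalar and constant term, the unique $\Ad(M_0^n)$-invariant element of $\mathcal{U}(\mathfrak{n}_n^-)$ of degree $2$, and $M^n$ normalizes $M_0^n$; equivalently it corresponds under $P\mapsto P^*$ to the $\Ad(M^n)$-invariant form \eqref{ICKEDEG}), while a direct matrix computation gives $\Ad(i(w_j^{(n)}))T_n=T_n$ (as $w_j^{(n)}$ negates a coordinate not occurring in $Y_{n-2}^-$) and $\Ad(i(J_{(n)}))T_n=-T_n$. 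Hence $\Ad(i(w_j^{(n)}))\mathcal{D}_N^0(\lambda)=\mathcal{D}_N^0(\lambda)$ and $\Ad(i(J_{(n)}))\mathcal{D}_N^0(\lambda)=(-1)^N\mathcal{D}_N^0(\lambda)=\mathcal{D}_N^0(\lambda)$ since $N$ is even, so $R(i(m))(R(\mathcal{E})u)=R(\mathcal{E})u$ for each such $m$, and therefore $R(\mathcal{E})u\in\Ind_{P^n}^{G^{n+1}}(\xi_{\lambda-N})$.

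For the ``only if'' direction I may assume $\mathcal{E}\neq0$. The first step recovers $\mathcal{E}=c\,\mathcal{D}_N^0(\lambda)$: just as in the proof of Theorem~\ref{HJA}, write out right-semi-invariance of $R(\mathcal{E})u$ under $i(\mathfrak{m}_n)$, $i(\mathfrak{a})$ and $i(\mathfrak{n}_n^+)$, differentiate the identity $R(i(h))(R(\mathcal{E})u)=R(\Ad(i(h))\mathcal{E})(R(i(h))u)$ at $h=e$, and use $R(\mathfrak{m}_{n+1})u=0$ and $R(H)u=\lambda u$ on $\Ind_{P^{n+1}}^{G^{n+1}}(\xi_\lambda)$ together with Lemma~\ref{formlemma} and Lemma~\ref{aux_lemma}; this produces exactly conditions \eqref{villkorA}, \eqref{villkorB}, \eqref{villkorC}, whence $\mathcal{E}=c\,\mathcal{D}_N^0(\lambda)$ by Theorem~\ref{thm_unikhet} (with $c\neq0$). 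The second step forces $N$ even: applying $R(i(J_{(n)}))$ exactly as above and using that $R(\mathcal{E})u$ is right-$M^n$-invariant (as $\sigma=\sigma_+$ is trivial and $J_{(n)}\in M^n$) gives $R(\mathcal{E})u=R(i(J_{(n)}))(R(\mathcal{E})u)=R(\Ad(i(J_{(n)}))\mathcal{E})u=(-1)^N R(\mathcal{E})u$ for every $u$; since $\mathcal{D}_N^0(\lambda)\neq0$, Lemma~\ref{aux_lemma} shows $R(\mathcal{E})$ is not identically zero on $\Ind_{P^{n+1}}^{G^{n+1}}(\xi_\lambda)$, so $(-1)^N=1$.

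The main obstacle is the component-group bookkeeping: checking that $i$ sends the disconnected pieces $w_j^{(n)},J_{(n)}$ of $M^n$ genuinely into $M^{n+1}$ (so that the hypotheses on $u$ can be used along $i(M^n)$), and pinning down the sign $\Ad(i(J_{(n)}))T_n=-T_n$ against the parity of the $T_n$-exponents of $\mathcal{D}_N^0(\lambda)$, which is exactly where the even/odd dichotomy is located. The infinitesimal half of the ``only if'' direction is routine given Lemmas~\ref{formlemma} and \ref{aux_lemma} and duplicates the corresponding step in the proof of Theorem~\ref{HJA}.
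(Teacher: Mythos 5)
Your proposal is correct and follows essentially the same approach as the paper: decompose $M^n = \{1_{(n)},J_{(n)}\}\langle w_1,w_2\rangle M_0^n$ via \eqref{MEGENSKAP}, invoke Theorem~\ref{HJA} for the $P_0$-level statement, then check $\Ad(i(w_1))$, $\Ad(i(w_2))$ fix $\mathcal{D}_N^0(\lambda)$ while $\Ad(i(J_{(n)}))$ scales it by $(-1)^N$ (because it fixes $\LAP_{n-1}$ and negates $T_n$), which is exactly \eqref{viktigfor}. The small improvements in your write-up — explicitly verifying $i(J_{(n)})\in Z_{K^{n+1}}(\mathfrak{a}^{n+1})\subseteq M^{n+1}$ before using \eqref{ad_komm}, and running the infinitesimal step directly off the $\Ind_{P^{n+1}}$-hypothesis rather than through Theorem~\ref{HJA}'s iff — are matters of presentation rather than a genuinely different route.
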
 \label{HUVA_MOD}\label{s7-1}

\begin{theorem} For $\lambda \in \mathbb{C}$ the element $\mathcal{E} \in \mathcal{U}(\mathfrak{n}_{n+1}^{-})$ induces a left
$G^{n+1}$-equivariant map
\begin{equation*}
\Ind_{P^{n+1}}^{G^{n+1}}(\xi_{\lambda} ) \rightarrow
\Ind_{P^n}^{G^{n+1}}(\xi_{\lambda-N} \otimes \sigma_{-})
\end{equation*}
if and only if $\mathcal{E} = c \mathcal{D}_{N}^{0}(\lambda)$ for some
$c \in \mathbb{C}$ and $N$ is odd.
\label{SIGMA}
\end{theorem}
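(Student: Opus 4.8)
The plan is to run the proof of Theorem~\ref{s7-1} essentially unchanged, keeping track of the single sign by which the twist $\sigma_-$ differs from the trivial character $\sigma_+$. Viewed through $R$, the element $\mathcal{E}\in\mathcal{U}(\mathfrak{n}_{n+1}^-)$ is a left-invariant differential operator on $C^\infty(G^{n+1})$, so $f\mapsto\mathcal{E}f$ automatically commutes with left translations; the whole content is therefore whether $\mathcal{E}f\in\Ind_{P^n}^{G^{n+1}}(\xi_{\lambda-N}\otimes\sigma_-)$ for every $f\in\Ind_{P^{n+1}}^{G^{n+1}}(\xi_\lambda)$, i.e.\ whether $(\mathcal{E}f)(g\tilde m\tilde a\tilde n)=\sigma_-(\tilde m)\xi_{\lambda-N}(\tilde a)(\mathcal{E}f)(g)$ for all $\tilde m\tilde a\tilde n\in P^n$ and $g\in G^{n+1}$. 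Writing $P^n=\{1_{(n)},J_{(n)}\}\langle w_1,w_2\rangle M_0^n A(N^+)^n$, I would split the check into the connected subgroup $P_0^n=M_0^n A(N^+)^n$ and the three discrete generators $w_1,w_2,J_{(n)}$.

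For $P_0^n$ the computation coincides with that in the proof of Theorem~\ref{HJA}, equivalently with the derivation of \eqref{VILL_A}, \eqref{VILL_B}, \eqref{VILL_C}: starting from $R(X)(\mathcal{E}f)=R(\mathcal{E})R(X)f+R([X,\mathcal{E}])f$, using $R(S)f=0$ for $S\in\mathfrak{m}_{n+1}$ and for $S\in\mathfrak{n}_{n+1}^+$ and $R(H)f=\lambda f$, and applying Lemmas~\ref{formlemma} and \ref{aux_lemma}, one obtains that $\mathcal{E}f$ is right $P_0^n$-equivariant with character $\xi_{\lambda-N}$ if and only if $[H,\mathcal{E}]=-N\mathcal{E}$, $[\mathfrak{m}_n,\mathcal{E}]=0$ and $[\mathfrak{n}_n^+,\mathcal{E}]\in\ARBIT$, i.e.\ (by Theorem~\ref{thm_unikhet}) if and only if $\mathcal{E}=c\,\mathcal{D}_N^0(\lambda)$ for some $c\in\mathbb{C}$. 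Here $\mathfrak{m}_n\simeq\mathfrak{so}(1,n-2)$ is semisimple for $n\ge4$, so the differential of $\sigma_-$ vanishes on $\mathfrak{m}_n$; hence this part is insensitive to the twist and is word for word the computation already carried out for $\sigma_+$.

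The remaining three conditions I would handle through the identity $(\mathcal{E}f)(g\tilde m)=\bigl(R(\Ad(\tilde m)\mathcal{E})\,R(\tilde m)f\bigr)(g)$, a direct consequence of \eqref{ad_komm}. For $\tilde m=w_i$ ($i=1,2$) the image of $w_i$ under $G^n\hookrightarrow G^{n+1}$ is again $w_i\in M^{n+1}$, so $R(\tilde m)f=f$; and $\Ad(w_i)$ fixes $H$, normalises $\mathfrak{m}_n$ and $\mathfrak{n}_n^+$, preserves $\ARBIT$, and fixes $T_n=Y_{n-2}^-$, hence the leading term $(T_n)^N$ of $\mathcal{D}_N^0(\lambda)$ (recall from Theorems~\ref{alg_core} and \ref{alg_core2} that $\mathcal{D}_N^0(\lambda)$ is a linear combination of the $(\LAP_{n-1})^j(T_n)^{N-2j}$ in which $(T_n)^N$ occurs with coefficient $1$), so Theorem~\ref{thm_unikhet} forces $\Ad(w_i)\mathcal{D}_N^0(\lambda)=\mathcal{D}_N^0(\lambda)$; thus $(\mathcal{E}f)(gw_i)=(\mathcal{E}f)(g)=\sigma_-(w_i)(\mathcal{E}f)(g)$, with no restriction on $N$, exactly as in Theorem~\ref{s7-1}. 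For $\tilde m=J_{(n)}$ one must be careful: the image of $J_{(n)}$ in $G^{n+1}$ is $\mathrm{diag}(-I_{n+1},1)$, not $J_{(n+1)}=-I_{n+2}$; but it still lies in $M^{n+1}$, being $J_{(n+1)}$ times $w_2$ times a rotation by $\pi$ in the plane of the coordinates $x_1,x_n$ (which sits in $M_0^{n+1}\cap K\simeq SO(n-1)$), so again $R(\tilde m)f=f$. The conjugation $\Ad(\mathrm{diag}(-I_{n+1},1))$ multiplies the $(p,q)$-entry of a matrix by $\epsilon_p\epsilon_q$ with $\epsilon_p=-1$ for $p\le n+1$ and $\epsilon_{n+2}=1$; since every root vector spanning $\mathfrak{n}_n^-$ has all its nonzero entries among the first $n+1$ coordinates while $T_n$ couples the last coordinate to the first block, this conjugation fixes $\LAP_{n-1}$ and sends $T_n\mapsto-T_n$, whence $\Ad(\mathrm{diag}(-I_{n+1},1))\mathcal{D}_N^0(\lambda)=(-1)^N\mathcal{D}_N^0(\lambda)$. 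Therefore $(\mathcal{E}f)(gJ_{(n)})=(-1)^N(\mathcal{E}f)(g)$, and this agrees with the required $\sigma_-(J_{(n)})(\mathcal{E}f)(g)=-(\mathcal{E}f)(g)$ for all $f$ (some of which have $\mathcal{E}f\not\equiv0$, by Lemma~\ref{aux_lemma}, whenever $\mathcal{E}\neq0$) precisely when $(-1)^N=-1$, i.e.\ $N$ odd. Combining this with the connected part gives both implications.

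The step I expect to be the real obstacle is the bookkeeping around $J_{(n)}$: one must resist identifying its image in $G^{n+1}$ with $J_{(n+1)}$, must verify that this image genuinely lies in $M^{n+1}$ so that $f$ stays right-invariant under it, and must compute the $\Ad$-action on the basis \eqref{NBASIS} precisely enough to see that it is exactly the factor $T_n$ that changes sign — this is what converts the parity constraint from ``$N$ even'' in Theorem~\ref{s7-1} into ``$N$ odd'' here. Everything else is a transcription of the arguments already carried out in the untwisted case.
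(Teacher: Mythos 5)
Your proposal is correct and follows essentially the same route as the paper: reduce to the connected subgroup $P_0^n$ via Theorem \ref{HJA} (noting that $\sigma_-$ restricts trivially to $M_0^n$), then handle the discrete generators $w_1, w_2, J_{(n)}$ by computing $\Ad(i(w))$ on $\mathcal{D}_N^0(\lambda)$, with the parity constraint coming out of $\Ad(i(J_{(n)}))T_n = -T_n$ while $\LAP_{n-1}$ is fixed. Two small stylistic differences from the paper: you derive $\Ad(i(w_i))\mathcal{D}_N^0(\lambda) = \mathcal{D}_N^0(\lambda)$ by invoking the uniqueness statement of Theorem \ref{thm_unikhet} (conjugation preserves the three defining conditions, and the leading coefficient of $(T_n)^N$ is unchanged), whereas the paper just records the $\Ad$-action on each basis element of $\mathfrak{n}_{n+1}^-$ explicitly in \eqref{viktigfor} and reads the result off term by term; and your factorization of $i(J_{(n)})$ as $J_{(n+1)} w_2$ times a rotation in $M_0^{n+1}\cap K$, while correct, is heavier than necessary — $i(J_{(n)}) = \mathrm{diag}(-I_{n+1},1)$ lies in $O(2)\times O(n)$ and visibly centralizes $\mathfrak{a}$, hence sits in $Z_{K^{n+1}}(\mathfrak{a}) \subset M^{n+1}$ directly, which is all that is needed for $R(i(J_{(n)}))u=u$.
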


For even $N$, we define 
$$
D_N(\lambda): \Ind_{P^{n+1}}^{G^{n+1}}(\xi_{\lambda}) \rightarrow
\Ind_{P^n}^{G^{n}}(\xi_{\lambda-N})
$$ 
by $D_N(\lambda) = i^{*} \circ \widetilde{\mathcal{D}^{0}_N
(\lambda)}$, where $\widetilde{\mathcal{D}^0_N(\lambda)} :
C^{\infty}(G^{n+1}) \rightarrow C^{\infty}(G^{n+1})$ is the operator
induced by $\mathcal{D}_N^0(\lambda)$. 

For odd $N$, we define 
$$
D_N(\lambda): \Ind_{P^{n+1}}^{G^{n+1}}(\xi_{\lambda}) \rightarrow
\Ind_{P^n}^{G^{n}}(\xi_{\lambda-N} \otimes \sigma_{-})
$$
by $D_N(\lambda) = i^{*} \circ \widetilde{\mathcal{D}^{0}_N (\lambda)}$, 
where $\widetilde{\mathcal{D}^0_N(\lambda)}: C^{\infty}(G^{n+1}) \rightarrow
C^{\infty}(G^{n+1})$ is the operator induced by $\mathcal{D}_N^0(\lambda)$.

\begin{theorem}\label{HUVA2} For $\lambda \in \mathbb{C}$ the element  
$\mathcal{E} \in \mathcal{U}(\mathfrak{n}_{n+1}^{-})$ induces a left
$G^{n+1}$-equivariant map
\begin{equation*}
\Ind_{\PHAT^{n+1}}^{G^{n+1}}(\xi_{\lambda}) \rightarrow
\Ind_{\PHAT^n}^{G^{n+1}}(\xi_{\lambda-N})
\end{equation*}
if and only if $\mathcal{E} = c \mathcal{D}_{N}^{0}(\lambda)$ for some
$c \in \mathbb{C}$.
\end{theorem}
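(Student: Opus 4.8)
The plan is to follow the pattern of the proofs of Theorems~\ref{HJA}, \ref{s7-1} and \ref{SIGMA}. Since $\mathcal{E}\in\mathcal{U}(\mathfrak{n}_{n+1}^-)$ acts on $C^\infty(G^{n+1})$ through the right regular action $R$, the operator $R(\mathcal{E})$ is automatically left $G^{n+1}$-equivariant, and left translations preserve both $\Ind_{\PHAT^{n+1}}^{G^{n+1}}(\xi_\lambda)$ and $\Ind_{\PHAT^n}^{G^{n+1}}(\xi_{\lambda-N})$; hence the only issue is to decide for which $\mathcal{E}$ the operator $R(\mathcal{E})$ maps the first space into the second. Two structural observations organise the argument. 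First, under \eqref{inkl2} one has $\PHAT^n\subset\PHAT^{n+1}$, because $\mathfrak{m}_n\subset\mathfrak{m}_{n+1}$, $\mathfrak{n}_n^+\subset\mathfrak{n}_{n+1}^+$, and $\mathfrak{a}$, $w_1$, $w_2$ are common to $G^n$ and $G^{n+1}$; and $\Ind_{P^{n+1}}^{G^{n+1}}(\xi_\lambda)\subset\Ind_{\PHAT^{n+1}}^{G^{n+1}}(\xi_\lambda)$ because $\PHAT^{n+1}\subset P^{n+1}$. This last inclusion lets Lemma~\ref{aux_lemma} be invoked unchanged: if $v\in\mathcal{U}(\mathfrak{n}_{n+1}^-)$ and $(R(v)f)(e)=0$ for all $f\in\Ind_{\PHAT^{n+1}}^{G^{n+1}}(\xi_\lambda)$, then the same holds for all $f\in\Ind_{P^{n+1}}^{G^{n+1}}(\xi_\lambda)$, so $v=0$. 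Secondly, since the inducing datum of $\PHAT^{n+1}$ is trivial on $\hat M^{n+1}$, every $u$ in the source satisfies $R(X)u=0$ for $X\in\mathfrak{m}_{n+1}$ and for $X\in\mathfrak{n}_{n+1}^+$, $R(H)u=\lambda u$, and $R(w_i)u=u$ for $i=1,2$.

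By left invariance of $R(\mathcal{E})$, the covariance of $R(\mathcal{E})u$ under right translation by $p\in\PHAT^n$ reduces, after replacing $u$ by $x\mapsto u(gx)$ (again in the source), to an identity at the identity element, and using $R(p)R(\mathcal{E})=R(\Ad(p)\mathcal{E})R(p)$ this identity reads
\[
 [R(\Ad(p)\mathcal{E})u](e)=\xi_{-N}(\tilde a)\,[R(\mathcal{E})u](e),\qquad p=\tilde m\tilde a\tilde n\in\PHAT^n,
\]
for all $u\in\Ind_{\PHAT^{n+1}}^{G^{n+1}}(\xi_\lambda)$; it suffices to check it on the generators $\exp(\mathfrak{m}_n)$, $\exp(\mathbb{R}H)$, $\exp(\mathfrak{n}_n^+)$, $w_1$, $w_2$ of $\PHAT^n$. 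For the ``only if'' direction I would unwind the three infinitesimal conditions: $p=\exp(tX)$, $X\in\mathfrak{m}_n$, gives $[R([X,\mathcal{E}])u](e)=0$ for all $u$, and since $[X,\mathcal{E}]\in\mathcal{U}(\mathfrak{n}_{n+1}^-)$ Lemma~\ref{aux_lemma} forces $[X,\mathcal{E}]=0$, i.e.\ \eqref{villkorA}; $p=\exp(tH)$ similarly gives $[H,\mathcal{E}]+N\mathcal{E}=0$, i.e.\ \eqref{villkorC}; and $p=\exp(tX)$, $X\in\mathfrak{n}_n^+$, after writing $[X,\mathcal{E}]=v_0+v_1H+(\text{terms ending in }\mathfrak{m}_{n+1})$ by Lemma~\ref{formlemma} and using $R(\mathfrak{m}_{n+1})u=0$, $R(H)u=\lambda u$, gives $v_0+\lambda v_1=0$, i.e.\ $[X,\mathcal{E}]\in\ARBIT$, which is \eqref{villkorB}. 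Theorem~\ref{thm_unikhet} then forces $\mathcal{E}=c\,\mathcal{D}_N^0(\lambda)$.

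For the ``if'' direction, Theorem~\ref{thm_unikhet} gives that $\mathcal{D}_N^0(\lambda)$ satisfies \eqref{villkorA}--\eqref{villkorC}, and running the preceding computations backwards yields covariance of $R(\mathcal{E})u$ under $\exp(\mathfrak{m}_n)$, $\exp(\mathbb{R}H)$ and $\exp(\mathfrak{n}_n^+)$; here one uses in addition that the right ideal $\mathcal{U}(\mathfrak{g}_{n+1})(\mathfrak{m}_{n+1}\oplus\mathbb{C}(H-\lambda)\oplus\mathfrak{n}_{n+1}^+)$ is $\mathrm{ad}(\mathfrak{n}_n^+)$-stable and is annihilated by $R(\cdot)u$ on the source, so that the whole series $\Ad(\exp tX)\mathcal{E}$, not merely its first-order part, is under control. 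The one genuinely new point is covariance under $w_1,w_2$, which amounts to $\Ad(w_i)\mathcal{D}_N^0(\lambda)=\mathcal{D}_N^0(\lambda)$. A direct computation with the matrices of Section~\ref{prel} gives $\Ad(w_1)Q_1^-=-Q_1^-$, $\Ad(w_1)Q_2^-=Q_2^-$, $\Ad(w_2)Q_1^-=Q_1^-$, $\Ad(w_2)Q_2^-=-Q_2^-$, and $\Ad(w_i)Y_j^-=Y_j^-$ for $1\le j\le n-2$, in particular $\Ad(w_i)T_n=T_n$; hence $\Ad(w_i)\LAP_{n-1}=\LAP_{n-1}$, and therefore $\Ad(w_i)$ fixes every monomial $(\LAP_{n-1})^jT_n^{2N-2j}$ resp.\ $(\LAP_{n-1})^jT_n^{2N-2j+1}$ occurring in $\mathcal{D}_N^0(\lambda)$, for every $N$.

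I expect the main obstacle to be minor: keeping the two groups $G^n\subset G^{n+1}$ apart in the bookkeeping, and propagating \eqref{villkorA}--\eqref{villkorC} to full (not merely infinitesimal) covariance under $\exp(\mathfrak{n}_n^+)$; the remaining ingredients are the now-familiar Lemmas~\ref{aux_lemma} and \ref{formlemma} together with Theorem~\ref{thm_unikhet} and the $\Ad(w_i)$-computation above. It is worth pointing out why the $\PHAT^n$ case carries no parity restriction, unlike Theorems~\ref{s7-1} and \ref{SIGMA}: the extra generator $J_{(n)}$ of $M^n$, which is \emph{not} in $\hat M^n$, maps under \eqref{inkl2} to $\mathrm{diag}(-1,\dots,-1,1)\in M^{n+1}$, whose adjoint action fixes $\LAP_{n-1}$ but sends $T_n\mapsto-T_n$, so that $\mathcal{D}_{2N+1}^0(\lambda)$ changes sign and must be twisted by $\sigma_-$ for the parabolic $P^n$; for $\PHAT^n$ this element never enters, and the same $\mathcal{D}_N^0(\lambda)$ works for all $N$.
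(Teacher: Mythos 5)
Your proof is correct and takes essentially the same route as the paper: the paper declares the argument analogous to that of Theorem~\ref{HUVA_MOD}, with $J_{(n)}$ dropped from $\hat{M}^n=\langle w_1,w_2\rangle M_0^n$, and thereby inherits Lemmas~\ref{aux_lemma} and \ref{formlemma}, Theorem~\ref{thm_unikhet}, and the $\Ad(w_i)$-computations of \eqref{viktigfor}, which are exactly the ingredients you assemble. Your rendering is a self-contained unfolding of that chain of references, and the observation that $\Ind_{P^{n+1}}^{G^{n+1}}(\xi_\lambda)\subset\Ind_{\PHAT^{n+1}}^{G^{n+1}}(\xi_\lambda)$ (so Lemma~\ref{aux_lemma} applies unchanged), together with the closing remark on why $J_{(n)}$ is responsible for the parity constraint in Theorems~\ref{HUVA_MOD} and \ref{SIGMA} but is absent here, makes explicit what the paper leaves implicit.
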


We define 
$$
\hat{D}_N(\lambda): \Ind_{\hat{P}^{n+1}}^{G^{n+1}}(\xi_{\lambda}) \rightarrow
\Ind_{\hat{P}^n}^{G^{n}}(\xi_{\lambda-N})
$$ by $\hat{D}_N(\lambda) = i^{*} \circ \widetilde{\mathcal{D}^{0}_N
(\lambda)}$, where $\widetilde{\mathcal{D}^0_N(\lambda) } :
C^{\infty}(G^{n+1}) \rightarrow C^{\infty}(G^{n+1})$ is the operator
induced by $\mathcal{D}_N^0(\lambda)$.

\begin{proof}[Proof of Theorem \ref{HJA}] Using Theorem \ref{thm_unikhet}, 
it follows that we need to prove that $\mathcal{E} \in
\mathcal{U}(\mathfrak{n}_{n+1}^{-})$ induces a map
$$
\Ind_{P_0^{n+1}}^{G^{n+1}}(\xi_{\lambda}) \rightarrow
\Ind_{P_0^{n}}^{G^{n+1}}(\xi_{\lambda-N})
$$
if and only if $\mathcal{E}$ satisfies the following three conditions:
\begin{equation}\label{3villkor}
\begin{split}
\left[X,\mathcal{E}\right] &= 0 \; \text{for all} \; X \in \mathfrak{m}_n, \\ 
\left[X, \mathcal{E}\right]  & \in \ARBIT \; \text{for all} \; 
X \in \mathfrak{n}_n^+,\\
\left[H, \mathcal{E}\right] &= -N \mathcal{E}.
\end{split}
\end{equation}

Observe that $\mathcal{E}$ induces a map if and only if
$$
(\mathcal{E} u)(x \tilde{m} \tilde{a} \tilde{n}) = \xi_{\lambda-N}
(\tilde{a}) ( \mathcal{E} u)(x)
$$ 
for all $\tilde{m}\tilde{a}\tilde{n} \in P_0^n$ and $u \in
\text{Ind}^{G^{n+1}}_{P_0^{n+1} }(\xi_{\lambda})$. First, we prove that
for all $u \in \text{Ind}^{G^{n+1}}_{P_0^{n+1} }(\xi_{\lambda})$, $x
\in G^{n+1}$ and $\tilde{m} \in M_0^{n}$ we have
\begin{equation}\label{STEPA}
(\mathcal{E} u)(x \tilde{m}) = ( \mathcal{E} u)(x)
\end{equation}
if and only if $[X,\mathcal{E}] = 0$ for all $X \in
\mathfrak{m}_n$. 

Assume that $[X,\mathcal{E}]=0$ for all $X \in
\mathfrak{m}_n$. Using $\exp(\mathfrak{m}_n) = M_n^0$, it follows that
$\Ad(\tilde{m}) \mathcal{E} = \mathcal{E}$ for $\tilde{m} \in
M^n_0$. Now for all $\tilde{m} \in M^{n}_0$ and all $u \in
\text{Ind}_{P_0^{n+1}}^{G^{n+1}}(\xi_\lambda)$ we get using
\eqref{ad_komm} that
\begin{equation}\label{STEPA__}
\begin{split}
(\mathcal{E} u)(x \tilde{m}) & = (R(\tilde{m}) \mathcal{E} u)(x) =
\big( \Ad(\tilde{m}) \mathcal{E} \big) u (x). \\
\end{split}
\end{equation} 

Conversely, assume that \eqref{STEPA} is true. Then
\eqref{STEPA__} shows that $\Ad(\tilde{m})\mathcal{E}u = \mathcal{E}u$
for all $u \in \text{Ind}_{P_0^{n+1}}^{G^{n+1}}(\xi_\lambda)$ and all
$\tilde{m} \in M^n_0$, and Lemma \ref{aux_lemma} shows that
$\Ad(\tilde{m}) \mathcal{E} = \mathcal{E}$ for $\tilde{m} \in M^n_0$.
Hence $[X,\mathcal{E}] = 0$ for all
$X \in \mathfrak{m}_n$.

Second, we prove that for all $u \in \text{Ind}^{G^{n+1}}_{P_0^{n+1}}
(\xi_{\lambda})$, $x \in G^{n+1}$ and $\tilde{n} \in N^{n}$ we have
\begin{equation}\label{STEPB}
(\mathcal{E} u)(x \tilde{n}) = ( \mathcal{E} u)(x)
\end{equation}
if and only if $\left[X, \mathcal{E}\right] \in \ARBIT \; \text{for
all} \; X \in \mathfrak{n}_n^+$.

Assume that $\left[X, \mathcal{E}\right] \in \ARBIT
\; \text{for all} \; X \in \mathfrak{n}_n^+$.

For all $X \in \mathfrak{n}_n^+$, $u \in
\text{Ind}_{P_0^{n+1}}^{G^{n+1}}(\xi_{\lambda})$ we get for $x \in
G^{n+1}$ that
$$
([X,\mathcal{E}]u )(x) = 0, 
$$ 
and hence using that $u(x\tilde{n}) = u(x)$ for all $\tilde{n} \in
N^n$ we get
\begin{equation*}
\begin{split}
(X \mathcal{E} u)(x) = ([X, \mathcal{E} ] u)(x) +
(\mathcal{E} X)u(x) = 0.
\end{split}
\end{equation*}

Conversely, assume that for all $u \in
 \text{Ind}_{P_0^{n+1}}^{G^{n+1}}(\xi_{\lambda})$ we have
\begin{equation}\label{nollf}
([X, \mathcal{E} ] u)(x) = 0, \,\, x \in G^{n+1}, \; X \in \mathfrak{n}_n.
\end{equation}
From Lemma \ref{formlemma} we know that $[X,\mathcal{E}] \in
(\mathcal{U}(\mathfrak{n}_{n+1}^-) \otimes \mathfrak{m}_{n+1}) \oplus
(\mathcal{U}(\mathfrak{n}_{n+1}^-) \otimes \mathfrak{a}) \oplus
\mathcal{U}(\mathfrak{n}_{n+1}^-)$. Using $u \in
\text{Ind}_{P_0^{n+1}}^{G^{n+1}}(\xi_{\lambda})$, we see that if
\begin{equation*}
[X,\mathcal{E}]u = \alpha (I_{1})^{a_1}  \dots (I_{n})^{a_{n}}
H u +  \beta (I_{1})^{a_1}  \dots (I_{n})^{a_{n}}u + \dots 
\end{equation*}
we get
\begin{equation*}
[X,\mathcal{E}]u = \alpha (I_{1})^{a_1}  \dots (I_{n})^{a_{n}}
\lambda u +  \beta (I_{1})^{a_1}  \dots (I_{n})^{a_{n}} u + \dots,
\end{equation*}
and using Lemma \ref{aux_lemma} and \eqref{nollf}
we get $\beta  = -\lambda \alpha$. We conclude that $[X,\mathcal{E}] \in
\ARBIT$.

Third we show that for all $u \in \text{Ind}^{G^{n+1}}_{P_0^{n+1}}
(\xi_{\lambda})$, $x \in G^{n+1}$, $\tilde{a} \in A$
$$
(\mathcal{E} u)(x \tilde{a} ) = \xi_{\lambda-N}(\tilde{a})(\mathcal{E} u)(x)
$$
if and only if $[H,\mathcal{E}] = -N \mathcal{E}$. Assume that $[H,\mathcal{E}] = -N \mathcal{E}$. We
get using \eqref{villkorC}
\begin{equation*}
\begin{split}
(H \mathcal{E})u(g) &= ([H, \mathcal{E} ] u)(g) + (\mathcal{E} H u)(g)
= (-N \mathcal{E} u)(g) + \lambda (\mathcal{E} u)(g) \\ & = (\lambda -
N) (\mathcal{E} u)(g).
\end{split}
\end{equation*}

Conversely, assume that $(H \mathcal{E} u)(x) = (\lambda-N)
(\mathcal{E} u)(x)$ for all $u \in
\text{Ind}^{G^{n+1}}_{P_0^{n+1}}(\xi_\lambda)$. Using
$$
H \mathcal{E} u = [H,\mathcal{E}] u + \mathcal{E} Hu = [H,\mathcal{E}] u +
\lambda \mathcal{E} u,
$$
we get
$$
([H,\mathcal{E}] + N\mathcal{E})  u = 0.
$$
Now $[H, \mathfrak{n}^{-}_{n+1}] \subset \mathfrak{n}_{n+1}^{-}$ and
hence Lemma \ref{aux_lemma} shows that
$$
[H,\mathcal{E}] + N\mathcal{E} = 0.
$$
\end{proof}

\begin{proof}[Proof of Theorem \ref{HUVA_MOD}] From \eqref{MEGENSKAP} we 
know that $M^n = \{ 1_{(n)}, J_{(n)} \} \langle w_1 , w_2 \rangle
M_0^n$, and hence $\mathcal{E} \in
\mathcal{U}(\mathfrak{n}_{n+1}^{-})$ induces a map
\begin{equation}
\Ind_{M^{n+1} A (N^+)^{n+1}}^{G^{n+1}}(\xi_{\lambda}) \rightarrow
\Ind_{M^{n} A (N^+)^{n}}^{G^{n+1}}(\xi_{\lambda-N})
\end{equation}
if and only if
$\mathcal{E}$ induces a map
$$
\Ind_{M_0^{n+1} A (N^+)^{n+1}}^{G^{n+1}}(\xi_{\lambda}) \rightarrow
\Ind_{M_0^{n} A (N^+)^{n}}^{G^{n+1}}(\xi_{\lambda-N})
$$ 
and
$$
(\mathcal{E} u)(x i(w) ) = ( \mathcal{E} u)(x), \,\, x \in G^{n+1}, \,\,
$$ 
for $u \in \text{Ind}^{G^{n+1}}_{P^{n+1}}(\xi_{\lambda})$ and $w
\in \{ w_1, w_2, J_{(n)} \}$. Here $i: G^n \rightarrow G^{n+1}$ is
given by \eqref{inkl2}.

Using Theorem \ref{HJA}, we conclude that $\mathcal{E} \in
\mathcal{U}(\mathfrak{n}_{n+1}^{-})$ induces a map
$$
\Ind_{M^{n+1} A (N^+)^{n+1}}^{G^{n+1}}(\xi_{\lambda}) \rightarrow
\Ind_{M^{n} A (N^+)^{n}}^{G^{n+1}}(\xi_{\lambda-N})
$$ 
if and only if for some $c \in \mathbb{C}$ we have $\mathcal{E} = c \mathcal{D}_N^0(\lambda)$ 
and
$$
(\mathcal{E} u)(x i(w) ) = ( \mathcal{E} u)(x), \,\, x \in G^{n+1}, \,\,
$$ 
for $u \in \text{Ind}^{G^{n+1}}_{P^{n+1}}(\xi_{\lambda})$
and $w \in \{ w_1, w_2, J_{(n)} \}$.

It follows that we need to prove that $\mathcal{E} = c
\mathcal{D}_{N}^{0}(\lambda)$ for some $c \in \mathbb{C}$ and $N$ is
even if and only if $\mathcal{E} = c \mathcal{D}_N^0(\lambda)$ for $c
\in \mathbb{C}$ and
\begin{equation}
(\mathcal{E} u)(x i(w) ) = ( \mathcal{E} u)(x), \; x \in G^{n+1}
\label{nystatement}
\end{equation}
for all $u \in
\text{Ind}^{G^{n+1}}_{P^{n+1}}(\xi_{\lambda})$ and $w \in \{ w_1, w_2,
J_{(n)} \}$.

For $u \in
\text{Ind}^{G^{n+1}}_{P^{n+1}}(\xi_{\lambda})$ and $x \in G^{n+1}$ we have 
\begin{multline}
(\Ad(i(w)) \mathcal{E}) u(x) = (R(i(w)) \circ \mathcal{E} \circ
  R(i(w)^{-1}) ) u(x) \\= (R(i(w)) \circ \mathcal{E}) u(x) = (\mathcal{E}u)(
  x i(w) ).
\label{hjekv}
\end{multline}
Using \eqref{hjekv} and Lemma \ref{aux_lemma}, we conclude that
\eqref{nystatement} is satisfied if and only if
$$
\Ad(i(w)) \mathcal{E} = \mathcal{E}
$$ 
for $w \in \{ w_1, w_2, J_{(n)} \}$. Using $\Ad(w)X = wXw^{-1}$, we see that 
\begin{equation}
\begin{split}
\Ad(i(w_1)) Q_1^- &= -Q_1^-, \\
\Ad(i(w_1)) Q_2^- &= Q_2^-,  \\
\Ad(i(w_1)) Y_j^- &= Y_j^-, \,\, \text{for} \,\, j=1, \dots, n-2,\\
\Ad(i(w_2)) Q_1^- &= Q_1^-, \\
\Ad(i(w_2)) Q_2^- &= -Q_2^-, \\
\Ad(i(w_2)) Y_j^- &= Y_j^-, \,\, \text{for} \,\, j=1, \dots, n-2,\\
\Ad(i(J_{(n)})) Q_1^- &= Q_1^-, \\ 
\Ad(i(J_{(n)})) Q_2^- &= Q_2^-, \\
\Ad(i(J_{(n)})) Y_{n-2}^- &= -Y_{n-2}^{-}, \\
\Ad(i(J_{(n)})) Y_j^- &= Y_j^- \,\,\, \text{for} \,\,\, j=1, \dots, n-3.
\end{split}
\end{equation} 
Here we used that
\begin{equation*}
Q_1^- = \begin{pmatrix} 0 & -1 & 0 & 0 & \textbf{0} \\
1 & 0 & 0 & 1 & \textbf{0} \\
0 & 0 & 0 & 0 & \textbf{0} \\
0 & 1 & 0 & 0 & \textbf{0} \\
\textbf{0}  & \textbf{0} &\textbf{0} &\textbf{0} &\textbf{0} 
\end{pmatrix},
Q_2^- = \begin{pmatrix} 0 & 0 & 1 & 0 & \textbf{0} \\
0 & 0 & 0 & 0 & \textbf{0} \\
1 & 0 & 0 & 1 & \textbf{0} \\
0 & 0 &-1 & 0 & \textbf{0} \\
\textbf{0}  & \textbf{0} & \textbf{0} & \textbf{0} & \textbf{0} 
\end{pmatrix}.
\end{equation*}
We get 
\begin{equation}
\begin{split}
\text{Ad}(i(J_{(n)})) \mathcal{D}^{0}_N(\lambda) &= (-1)^N
\mathcal{D}^{0}_N(\lambda), \\
\text{Ad}(i(w_1)) \mathcal{D}^{0}_N(\lambda) &= 
\mathcal{D}^{0}_N(\lambda), \\
\text{Ad}(i(w_2)) \mathcal{D}^{0}_N(\lambda) &= 
\mathcal{D}^{0}_N(\lambda).
\label{viktigfor}
\end{split}
\end{equation} 
This completes the proof.
\end{proof}

\begin{proof}[Proof of Theorem \ref{SIGMA}]
From \eqref{MEGENSKAP} we know that $$M^n = \{ 1_{(n)}, J_{(n)} \}
\langle w_1 , w_2 \rangle M_0^n,$$and hence $\mathcal{E} \in
\mathcal{U}(\mathfrak{n}_{n+1}^{-})$ induces a map
\begin{equation}
\Ind_{M^{n+1} A (N^+)^{n+1}}^{G^{n+1}}(\xi_{\lambda}) \rightarrow
\Ind_{M^{n} A (N^+)^{n}}^{G^{n+1}}(\xi_{\lambda-N} \otimes \sigma_{-})
\end{equation}
if and only if
$\mathcal{E}$ induces a map
$$ \Ind_{M_0^{n+1} A (N^+)^{n+1}}^{G^{n+1}}(\xi_{\lambda}) \rightarrow
\Ind_{M_0^{n} A (N^+)^{n}}^{G^{n+1}}(\xi_{\lambda-N} \otimes
\sigma_{-}\vert_{M_0^n} ) = \Ind_{M_0^{n} A
(N^+)^{n}}^{G^{n+1}}(\xi_{\lambda-N} )
$$ 
and
$$ 
(\mathcal{E} u)(x i(w) ) = \sigma_{-}(w) ( \mathcal{E} u)(x) , \,\,
x \in G^{n+1}, \,\,
$$ 
for $u \in \text{Ind}^{G^{n+1}}_{P^{n+1}}(\xi_{\lambda})$ and $w
\in \{ w_1, w_2, J_{(n)} \}$. Here $i: G^n \rightarrow G^{n+1}$ is
given by \eqref{inkl2}.

Using Theorem \ref{HJA}, we conclude that $\mathcal{E} \in
\mathcal{U}(\mathfrak{n}_{n+1}^{-})$ induces a map
$$
\Ind_{M^{n+1} A (N^+)^{n+1}}^{G^{n+1}}(\xi_{\lambda}) \rightarrow
\Ind_{M^{n} A (N^+)^{n}}^{G^{n+1}}(\xi_{\lambda-N} \otimes \sigma_{-})
$$ if and only if for some $c \in \mathbb{C}$ we have $\mathcal{E} = c
\mathcal{D}_N^0(\lambda)$ and
$$
(\mathcal{E} u)(x i(w) ) = \sigma_{-}(w) ( \mathcal{E} u)(x) , \,\, x \in G^{n+1}, \,\,
$$ 
for $u \in \text{Ind}^{G^{n+1}}_{P^{n+1}}(\xi_{\lambda})$
and $w \in \{ w_1, w_2, J_{(n)} \}$.

It follows that we need to prove that $\mathcal{E} = c
\mathcal{D}_{N}^{0}(\lambda)$ for some $c \in \mathbb{C}$ and $N$ is
odd if and only if $\mathcal{E} = c \mathcal{D}_N^0(\lambda)$ for $c
\in \mathbb{C}$ and
\begin{equation}
(\mathcal{E} u)(x i(w)) = \sigma_{-}(w) (\mathcal{E}u)(x), \; x \in
G^{n+1}, \;
\label{nystatement2}
\end{equation}
for all $u \in
\text{Ind}^{G^{n+1}}_{P^{n+1}}(\xi_{\lambda})$ and $w \in
\{ w_1, w_2, J_{(n)} \}$.

For $u \in \text{Ind}^{G^{n+1}}_{P^{n+1}}(\xi_{\lambda})$ and $x \in
G^{n+1}$ we have
\begin{multline}
(\Ad(i(w)) \mathcal{E}) u(x) = (R(i(w)) \circ \mathcal{E} \circ
R(i(w)^{-1})) u(x) \\
= (R(i(w)) \circ \mathcal{E}) u(x) =
(\mathcal{E}u)(xi(w)).
\label{hjekv2}
\end{multline}
Using \eqref{hjekv2} and Lemma \ref{aux_lemma}, we conclude that
\eqref{nystatement2} is satisfied if and only if
\begin{equation}
\begin{split}
\Ad(i(w_1)) \mathcal{E} &= \sigma_{-}(w_1) \mathcal{E} = \mathcal{E}, \\
\Ad(i(w_2)) \mathcal{E} &= \sigma_{-}(w_2) \mathcal{E} = \mathcal{E},\\
\Ad(i(J_{(n)})) \mathcal{E} &= \sigma_{-}(J_{(n)}) \mathcal{E} = -\mathcal{E}.
\end{split}
\end{equation}
Using \eqref{viktigfor}, the result follows.
\end{proof}

\begin{proof}[Proof of Theorem \ref{HUVA2}]
The proof is analogous to the proof of Theorem \ref{s7-1} using that
$\hat{M}^n = \langle w_1, w_2 \rangle M_0^n$.
\end{proof}

The differential operator families in the non-compact model are defined by
\begin{equation*}
\begin{split} 
& D_N^{\text{nc}}(\lambda) : C^{\infty}((N^-)^{n+1})_{\xi_{\lambda}}
\rightarrow C^{\infty}((N^-)^n)_{\xi_{\lambda-N} \otimes \sigma}, \\ &
D_{N}^{\text{nc}}(\lambda) = \beta^n_{\xi_{\lambda-N} \otimes \sigma}
\circ D_N(\lambda) \circ (\beta_{\xi_{\lambda} }^{n+1})^{-1},
\end{split}
\end{equation*}
where
\begin{equation}
\sigma = \begin{cases} \sigma_+ & \text{if} \,\, N \,\, \text{is even}, \\ 
\sigma_{-} & \text{if} \,\, N \,\, \text{is odd}.
         \end{cases}
\label{ALTDEF}
\end{equation}
$\beta_{\xi_{\lambda} \otimes \sigma}^{m}$ is the restriction map
$\text{Ind}_{P^m}^{G^m}(\xi_\lambda \otimes \sigma) \rightarrow
C^{\infty}((N^-)^m)_{\xi_{\lambda} \otimes \sigma} \subset
C^{\infty}(\mathbb{R}^{m-1})$ and $\sigma$ is given by \eqref{ALTDEF}.

Using the identification \eqref{IDENT}, we have
\begin{equation}\label{diffop_ickekom}
\begin{split}
D^{\text{nc}}_{2N}(\lambda) & = \sum_{j=0}^{N} a_{j}(\lambda)
(\Delta_{\mathbb{M}^{n-1}})^j i^* \left(\frac{\partial}{\partial
x_n}\right)^{2N-2j}, \\ D_{2N+1}^{\text{nc}}(\lambda) & =
\sum_{j=0}^{N} b_j(\lambda) (\Delta_{\mathbb{M}^{n-1}})^j i^*
\left(\frac{\partial}{\partial x_n}\right)^{2N-2j+1},
\end{split}
\end{equation}
where
$$
\Delta_{\mathbb{M}^{n-1}} = -\frac{\partial^2}{\partial x_1^2} + \sum_{i=2}^{n-1}
\frac{\partial^2}{\partial x_i^2}
$$ 
and $(i^* \varphi)(x') = \varphi(x',0)$.

\section{Intertwining families and asymptotics of eigenfunctions}\label{formal}

In the present section we describe an alternative construction of the
families $D_{N}^{\text{nc}}(\lambda)$ in terms of the asymptotics of
  eigenfunctions of the Laplacian on anti de Sitter space (called the
  \emph{residue family} in \cite{juhl_conform}).

Let
$$ 
\mathbb{U}_{n} = \{(x_1,\ldots,x_n) \in \mathbb{R}^n \, \vert \,x_n > 0 \}
$$
be the Lorentzian upper half-space with the Lorentzian metric 
$$
x_n^{-2} \left( -dx_1^2 + \sum_{i=2}^{n} d x_i^2 \right)
$$ 
and the Laplacian
$$
\Delta_{\mathbb{U}_n} = x_n^2 \Delta_{\mathbb{M}^{n-1} } + x_{n}^2
\frac{\partial^2}{\partial x_n^2} - (n-2)x_n \frac{\partial}{\partial x_n}.
$$
We seek \emph{formal} solutions $u \in C^\infty(\mathbb{U}_n)$ to the equation
$$
-\Delta_{\mathbb{U}_n } u = \lambda(n-1-\lambda)u
$$
in the form
$$ 
u(x) \sim \sum_{j \geq 0} x_n^{\lambda+j} c_{j}(x'), \; x=(x',x_n), \; 
c_{\text{odd}} = 0.
$$ 
The coefficients $c_j$ satisfy recursive relations such that the
coefficients $c_{2j}$, $j \geq 1$, are determined by $c_0$. More
precisely, we have maps
$$
T_{2j}(\lambda): c_0(x') \mapsto c_{2j}(\lambda,x')
$$
which are differential operators (depending on $\lambda$) of order $2j$ on
$\mathbb{R}^{n-1}$, where $T_0(\lambda) = \text{id}$.

We define $S_{2N}(\lambda): C^{\infty}(\mathbb{R}^n) \rightarrow
C^{\infty}(\mathbb{R}^{n-1})$ by
$$ 
S_{2N}(\lambda) = \sum_{j=0}^{N} \frac{1}{(2N\!-\!2j)!} T_{2j}(\lambda) i^* 
\left( \frac{\partial}{\partial x_n} \right)^{2N-2j}.
$$

\begin{theorem} The families $S_{2N}(\lambda+n-1-2N)$ and $D_{2N}^{\mathrm{nc}}(\lambda)$
coincide, up to a rational function in $\lambda$. 
\label{LIKAFAMILJ}
\end{theorem}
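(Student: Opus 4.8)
The plan is to show that both families $S_{2N}(\lambda+n-1-2N)$ and $D_{2N}^{\mathrm{nc}}(\lambda)$, when written in the basis $\{(\Delta_{\mathbb{M}^{n-1}})^j\,i^*(\partial/\partial x_n)^{2N-2j}\}_{j=0}^N$ of the space of operators of the relevant type, have coefficient vectors that are determined \emph{up to a common scalar} by one and the same linear recursion. Concretely, by \eqref{diffop_ickekom} the family $D_{2N}^{\mathrm{nc}}(\lambda)$ has coefficients $a_j(\lambda)$ governed by the recursion \eqref{koeff_villkor} of Theorem \ref{alg_core}, while $S_{2N}(\mu)$ has coefficients $\tfrac{1}{(2N-2j)!}A_{2j}(\mu)$ with $A_{2j}$ governed by \eqref{DEFS2}. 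So the task reduces to: substitute $\mu=\lambda+n-1-2N$ into the $A$-recursion, clear the factorial normalisation, and check that the resulting two-term recursion for $\tfrac{1}{(2N-2j)!}A_{2j}(\lambda+n-1-2N)$ is \emph{proportional} (as a recursion on $j$) to \eqref{koeff_villkor}. Since a two-term recursion on indices $j=1,\dots,N$ with a nonvanishing leading coefficient determines the whole vector from $a_N$, proportionality of the recursions forces the two coefficient vectors to agree up to the overall scalar $a_N(\lambda)$ — which, being a ratio of products of linear factors in $\lambda$, is exactly the advertised rational function.

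First I would set up the $T_{2j}$ recursion carefully. Plugging the ansatz $u(x)\sim\sum x_n^{\lambda+2j}c_{2j}(x')$ into $-\Delta_{\mathbb{U}_n}u=\lambda(n-1-\lambda)u$ and collecting the coefficient of $x_n^{\lambda+2j}$ yields, using $\Delta_{\mathbb{U}_n}=x_n^2\Delta_{\mathbb{M}^{n-1}}+x_n^2\partial_n^2-(n-2)x_n\partial_n$, a relation of the form $\Delta_{\mathbb{M}^{n-1}}c_{2j-2} + \big[(\lambda+2j)(\lambda+2j-1)-(n-2)(\lambda+2j)+\lambda(n-1-\lambda)\big]c_{2j}=0$. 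Simplifying the bracket gives $2j(2j+2\lambda+1-n)$, which reproduces exactly \eqref{DEFS2}–\eqref{DEFS} and the recursion $A_{2j-2}(\lambda)+2j(2j+2\lambda+1-n)A_{2j}(\lambda)=0$. This is routine and I would present it compactly.

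Next comes the comparison. Write $S_{2N}(\lambda)=\sum_{j=0}^N s_j(\lambda)(\Delta_{\mathbb{M}^{n-1}})^j i^*(\partial/\partial x_n)^{2N-2j}$ with $s_j(\lambda)=\tfrac{1}{(2N-2j)!}A_{2j}(\lambda)$. From the $A$-recursion, $s_{j-1}$ and $s_j$ are related by $(2N-2j+2)!\,s_{j-1}=A_{2j-2}=-2j(2j+2\lambda+1-n)A_{2j}=-2j(2j+2\lambda+1-n)(2N-2j)!\,s_j$, i.e. $(2N-2j+2)(2N-2j+1)s_{j-1}+2j(2j+2\lambda+1-n)s_j=0$. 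Now substitute $\lambda\mapsto\lambda+n-1-2N$: the factor $2j+2\lambda+1-n$ becomes $2j+2(\lambda+n-1-2N)+1-n=2j+2\lambda-1+n-4N=n-1+2\lambda-4N+2j$. Noting $2N-2j+2=2(N-j+1)$, the recursion becomes $2(N-j+1)(2N-2j+1)s_{j-1}+2j(n-1+2\lambda-4N+2j)s_j=0$, which is precisely $2\times$\eqref{koeff_villkor}. Hence $(s_j)$ and $(a_j)$ satisfy the same recursion for $j=1,\dots,N$; since \eqref{koeff_villkor} determines $a_{j-1}$ from $a_j$ (the coefficient $(N-j+1)(2N-2j+1)$ of $a_{j-1}$ is nonzero for $1\le j\le N$), both vectors are scalar multiples of the one normalised by $a_N=1$, namely the coefficient vector of $\mathcal{D}_{2N}^0(\lambda)$. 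Therefore $S_{2N}(\lambda+n-1-2N)=s_N(\lambda)\,D_{2N}^{\mathrm{nc}}(\lambda)/a_N(\lambda)$ up to identifying $D_{2N}^{\mathrm{nc}}$ with the operator induced by $\mathcal{D}_{2N}^0$; since $s_N(\lambda)=\tfrac{1}{0!}A_0(\lambda+n-1-2N)=1$, the two families differ by the rational function $1/a_N(\lambda)$, and more generally $D_{2N}^{\mathrm{nc}}(\lambda)$ itself may carry a $\lambda$-dependent normalisation, giving the stated rational-function ambiguity.

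The main obstacle I anticipate is purely bookkeeping: getting the two affine changes of index and the factorial shift to line up sign-for-sign, and being precise about which normalisation of $D_{2N}^{\mathrm{nc}}(\lambda)$ is in force (the operator induced by $\mathcal{D}_{2N}^0(\lambda)$ with $a_N=1$, versus any rescaling absorbed into the definition of $D_N(\lambda)$). Once the recursion \eqref{koeff_villkor} is recognised on the $S$-side, the proportionality — and hence the theorem — is immediate from uniqueness of solutions to a nondegenerate two-term recursion, which is the same uniqueness already exploited in Theorems \ref{alg_core} and \ref{thm_unikhet}. No genuinely new idea is needed; the content is that the "spectral" recursion for the asymptotic coefficients and the "algebraic" recursion \eqref{koeff_villkor} coincide after the parameter shift $\lambda\mapsto\lambda+n-1-2N$.
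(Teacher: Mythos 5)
Your proposal follows essentially the same route as the paper: derive the recursion for $A_{2j}$, rescale by the factorial normalization, substitute $\lambda\mapsto\lambda+n-1-2N$, and match the resulting two-term recursion with \eqref{koeff_villkor} for the $a_j$, concluding proportionality of the coefficient vectors by uniqueness of solutions to a nondegenerate two-term recursion (the paper phrases this as observing that $B_{2j}(\lambda+n-1-2N)/a_j(\lambda)$ is independent of $j$). One small slip at the end: with $s_j(\lambda)=\tfrac{1}{(2N-2j)!}A_{2j}(\lambda)$ you have $s_N(\lambda)=\tfrac{1}{0!}A_{2N}(\lambda+n-1-2N)=A_{2N}(\lambda+n-1-2N)$, not $A_0=1$; this is precisely the rational function the paper computes explicitly as $B_{2N}$, so the proportionality constant is that rational function rather than $1$. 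The slip is immaterial to the proof since the recursion-matching and the resulting proportionality are correct, and the theorem only asserts agreement up to a rational function.
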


\begin{proof}
A computation shows that $T_{2j}(\lambda) = A_{2j}(\lambda) 
(\Delta_{\mathbb{M}^{n-1}})^j$, where
\begin{equation}\label{rekurr}
A_{2j-2}(\lambda) + 2j(2j+2\lambda+1-n) A_{2j}(\lambda) = 0
\end{equation}
and $A_0(\lambda) = 1$. We see that $S_{2N}(\lambda)$ can be written in the form
\begin{equation}
\begin{split}
&\sum_{j=0}^{N} \frac{1}{(2N\!-\!2j)!} A_{2j}(\lambda)
(\Delta_{\mathbb{M}^{n-1}} )^{j} i^* \left( \partial/\partial x_n
\right)^{2N-2j} \\ &= \sum_{j=0}^{N} B_{2j}(\lambda) (\Delta_{\mathbb{M}^{n-1}})^{j} 
i^* \left( \partial/\partial x_n \right)^{2N-2j},
\end{split}
\end{equation}
where the coefficients $B_{2j}(\lambda)$ satisfy
$$
B_{0}(\lambda) = \frac{1}{(2N)!},
$$
and
\begin{equation}\label{Brekurr}
B_{2j-2}(\lambda) + \frac{2j(2j\!+\!2\lambda\!+\!1\!-\!n)}
{(2N\!-\!2j\!+\!2)(2N\!-\!2j\!+\!1)} B_{2j}(\lambda) = 0.
\end{equation}
We recall that $D_{2N}^{\mathrm{nc}}(\lambda)$ is given by \eqref{diffop_ickekom}, 
where
$$ 
a_{j-1}(\lambda) + \frac{2j(2j\!+\!2\lambda\!+\!n\!-\!1\!-\!4N)}
{(2N\!-\!2j\!+\!2)(2N\!-\!2j\!+\!1)} a_j(\lambda) = 0.
$$
Now we observe that
$$
\frac{B_{2j}(\lambda\!+\!n\!-\!1\!-\!2N)}{a_j(\lambda)}
$$ 
does not depend on $j$. Using $a_N(\lambda)=1$, \eqref{Brekurr} shows
\begin{align*}
B_{2N}(\lambda) & = \frac{1}{(2 \cdot 4 \cdot \ldots \cdot 2N)
(n\!-\!1\!-\!2\lambda\!-\!2)(n\!-\!1\!-\!2\lambda\!-\!4) 
\dots (n\!-\!1\!-\!2\lambda\!-\!2N)} \\ 
& = \frac{1}{2^{N} N!(n\!-\!1\!-\!2\lambda\!-\!2)(n\!-\!1\!-\!2\lambda\!-\!4)\dots
(n\!-\!1\!-\!2\lambda\!-\!2N)}.
\end{align*}
We conclude that
$$ 
S_{2N}(\lambda\!+\!n\!-\!1\!-\!2N) =
B_{2N}(\lambda\!+\!n\!-\!1\!-\!2N) D_{2N}^{\text{nc}}(\lambda).
$$
\end{proof}

If we define
$$
S_{2N+1}(\lambda) = \sum_{j=0}^{N} \frac{1}{(2N\!+1-\!2j)!} T_{2j}(\lambda) i^* 
\left( \frac{\partial}{\partial x_n} \right)^{2N+1-2j},
$$
we get the following analogous result:
\begin{theorem}
The families $S_{2N+1}(\lambda+n-1-(2N+1))$ and
$D_{2N+1}^{\mathrm{nc}}(\lambda)$ coincide, up to a rational function in $\lambda$.
\end{theorem}
\begin{proof}
The proof is similar to the proof of Theorem \ref{LIKAFAMILJ}; we refer to Theorem 5.2.6 in \cite{juhl_conform}. 
\end{proof}

\section*{Appendix}

\subsection*{Summary of commutator relations for $\mathfrak{g}_n 
= \mathfrak{o}(2,n-1)$}

\begin{align*}
[Q_{1}^{+}, Y_{j}^{+} ] & = 0, & [Q_1^- , Y_j^+ ] &= Z_j^+ + Z_j^-, \\
[Q_{1}^{+}, Y_{j}^{-} ] & = -Z_{j}^+ - Z_{j}^-, & [Q_1^- , Y_j^-] &= 0,
\\ [Q_{1}^{+}, Z_{j}^{+} ] & = Y_j^+, & [Q_1^-, Z_j^+] &= -Y_j^-, \\
[Q_1^+, Z_j^- ] &= Y_j^+, & [Q_1^-, Z_j^-] & = -Y_j^-, \\ {} \\
[Q_{2}^{+}, Y_{j}^{+} ] & = 0, & [Q_2^- , Y_j^+ ] & = Z_j^+ - Z_j^-, \\
[Q_{2}^{+}, Y_{j}^{-} ] & = Z_j^+ - Z_j^-, & [Q_2^- , Y_j^-] &= 0, \\
[Q_{2}^{+}, Z_{j}^{+} ] & = Y_j^+, & [Q_2^-, Z_j^+] &= Y_j^-, \\
[Q_2^+, Z_j^- ] &= -Y_j^+ , & [Q_2^-, Z_j^-] &= -Y_j^-, \\ {} \\
[Q_1^+, Q_2^+] &= 0, & [H, Q_1^+] &= Q_1^+, \\ [Q_1^-, Q_2^-] &= 0, &
[H, Q_1^-] &= -Q_1^-, \\ [Q_1^+, Q_2^-] &= -2H_2, & [H, Q_2^+] &=
Q_2^+, \\ [Q_1^-, Q_2^+] &= 2H_2, & [H, Q_2^-] &= -Q_2^-, \\ [Q_1^+,
Q_1^-] &= 2H, & [H_2, Q_1^+] &= -Q_2^+, \\ [Q_2^+, Q_2^-] &= 2H,  &
[H_2, Q_1^-] &= Q_2^-, \\ [Q_i^{\pm}, M_{jk} ] &= 0, & [H_2, Q_2^+] &=
-Q_1^+, \\ {}&{} & [H_2, Q_2^-] &= Q_1^-,  {} \\ \\ [Y_{i}^{+},
Y_{j}^{-}] &= 2\delta_{ij} H + 2M_{ij}, & [Z_{i}^{+}, Z_{j}^{-}] &=
2\delta_{ij} H_2 + 2M_{ij}, \\ [M_{ij}, Y^{\pm}_{r}] &= \delta_{jr}
Y_{i}^{\pm} - \delta_{ir}Y_{j}^{\pm}, & [M_{ij}, Z^{\pm}_{r}] &=
\delta_{jr} Z_{i}^{\pm} - \delta_{ir}Z_{j}^{\pm}, \\ [H, Y_{j}^{\pm}]
&= \pm Y_{j}^{\pm}, & [H_2, Z_{j}^{\pm}] &= \pm Z_{j}^{\pm}, \\ [H,
M_{ij}] &= 0, & [H_2, M_{ij}] &= 0, \\ {} \\ [Y_j^+ , Z_k^+] &=
\delta_{jk} (Q_1^+ - Q_2^+), & [Y_j^+, Z_k^-] &= \delta_{jk} (Q_1^+ +
Q_2^+), \\ [Y_j^-, Z_k^+] &= \delta_{jk} (-Q_1^- - Q_2^-), & [Y_j^-,
Z_k^-] &= \delta_{jk} (-Q_1^- + Q_2^-).
\end{align*}

\bibliography{geomfin}
\bibliographystyle{plain}

\end{document}